\numberwithin{equation}{section}
\newtheorem{theorem}{Theorem}[section]
\newtheorem{lemma}[theorem]{Lemma}
\newtheorem{proposition}[theorem]{Proposition}
\newtheorem{corollary}[theorem]{Corollary}
\theoremstyle{definition}
\newtheorem{example}[theorem]{Example}
\newtheorem{remark}[theorem]{Remark}
\newcommand{\be}{\begin{equation}}
\newcommand{\ee}{\end{equation}}
\newcommand{\bes}{\begin{equation*}}
\newcommand{\ees}{\end{equation*}}
\newcommand{\cA}{\mathcal{A}}
\newcommand{\cB}{\mathcal{B}}
\newcommand{\cE}{\mathcal{E}}
\newcommand{\cF}{\mathcal{F}}
\newcommand{\cG}{\mathcal{G}}
\newcommand{\cH}{\mathcal{H}}
\newcommand{\cI}{\mathcal{I}}
\newcommand{\cJ}{\mathcal{J}}
\newcommand{\cK}{\mathcal{K}}
\newcommand{\cL}{\mathcal{L}}
\newcommand{\cO}{\mathcal{O}}
\newcommand{\cR}{\mathcal{R}}
\newcommand{\cS}{\mathcal{S}}
\newcommand{\cU}{\mathcal{U}}
\newcommand{\cV}{\mathcal{V}}
\newcommand{\lel}{\left\langle}
\newcommand{\rir}{\right\rangle}
\newcommand{\bB}{\mathbb{B}}
\newcommand{\bC}{\mathbb{C}}
\newcommand{\bD}{\mathbb{D}}
\newcommand{\bF}{\mathbb{F}}
\newcommand{\bM}{\mathbb{M}}
\newcommand{\bN}{\mathbb{N}}
\newcommand{\ol}{\overline}
\newcommand{\im}{\operatorname{Im}}
\newcommand{\Rep}{\operatorname{Rep}}
\newcommand{\alg}{\operatorname{alg}}
\newcommand{\Aut}{\operatorname{Aut}}
\newcommand{\diag}{\operatorname{diag}}
\newcommand{\Mult}{\operatorname{Mult}}
\newcommand{\mlt}{\operatorname{Mult}}
\newcommand{\spn}{\operatorname{span}}
\newcommand{\fB}{{\mathfrak{B}}}
\newcommand{\fC}{{\mathfrak{C}}}
\newcommand{\fD}{{\mathfrak{D}}}
\newcommand{\fL}{{\mathfrak{L}}}
\newcommand{\fV}{{\mathfrak{V}}}
\newcommand{\fX}{{\mathfrak{X}}}
\newcommand{\foral}{\text{ for all }}
\begin{document}

\title{On the Classification of Function Algebras on Subvarieties of Noncommutative Operator Balls}

\author{Jeet Sampat}
\address{Department of Mathematics\\
Technion --- Israel Institute of Technology\\
Haifa, Israel}
\email{sampatjeet@campus.technion.ac.il}
\author{Orr Moshe Shalit}
\address{Department of Mathematics\\
Technion --- Israel Institute of Technology\\
Haifa, Israel}
\email{oshalit@technion.ac.il}

\subjclass[2010]{46L52, 47B32, 47L80}

\thanks{The second author was partially supported by Israel Science Foundation Grant no. 431/20.}

\begin{abstract}
We study algebras of bounded noncommutative (nc) functions on unit balls of operator spaces (nc operator balls) and on their subvarieties. 
Considering the example of the nc unit polydisk we show that these algebras, while having a natural operator algebra structure, might not be the multiplier algebra of any reasonable nc reproducing kernel Hilbert space (RKHS). 
After examining additional subtleties of the nc RKHS approach, we turn to study the structure and representation theory of these algebras using function theoretic and operator algebraic tools. 
We show that the underlying nc variety is a complete invariant for the algebra of uniformly continuous nc functions on a homogeneous subvariety, in the sense that two such algebras are completely isometrically isomorphic if and only if the subvarieties are nc biholomorphic. 
We obtain extension and rigidity results for nc maps between subvarieties of nc operator balls corresponding to injective spaces that imply that a biholomorphism between homogeneous varieties extends to a biholomorphism between the ambient balls, which can be modified to a linear isomorphism. 
Thus, the algebra of uniformly continuous nc functions on nc operator balls, and even its restriction to certain subvarieties, completely determine the operator space up to completely isometric isomorphism.
\end{abstract}

\maketitle

\section{\textbf{Introduction}} \label{section_introduction}

\subsection{Overview} \label{subsection_overview}

Algebras of bounded noncommutative (nc) functions on nc domains and their subvarieties constitute an interesting class of objects to study for several reasons. 
First, ``because they are there" --- they form a rich class of operator algebras that are amenable to analysis in concrete terms that we can investigate, and thus broaden our acquaintance with operator algebras. 
Second, it turns out that this class of algebras contains many operator algebras that have arisen independently of nc function theory, for example in \cite{SSS18,SSS20} the class of algebras of bounded nc functions on subvarieties of the nc unit row ball were shown to include tensor algebras of subproduct systems \cite{KakSha19,ShaSol09} and also (commutative) multiplier algebras of complete Pick spaces \cite{DRS11, DRS15, Har12}. 
Representing these operator algebras as algebras of nc functions provides us with tractable invariants, enlightening insights, and hence also with new results; e.g., applications to isomorphisms of subproduct system algebras in \cite{SSS18} or to the problem of quasi-inner automorphisms of $\fL_d$ in \cite{SSS20}.
Third, studying these algebras, and in particular classifying them, has driven the discovery of purely function theoretic and geometric theorems; for example: maximum modulus principle and Nullstellens\"{a}tze \cite{SSS18}, nc Schwarz lemma and spectral Cartan uniqueness theorem \cite{SSS20}, fixed point and iteration theoretic results \cite{BS+,Sham18}, and the clarification of the notion of uniform continuity \cite{HRS22}. 
In this paper, among other things, we find a fourth reason why these algebras are interesting: they provide a concrete representation of the universal unital operator algebra generated by an operator space, and this leads us to some new striking observations of a general nature. 

While operator algebras of noncommutative functions have been studied in many versions for a long time (see \cite{AriLat12,AriPop00,DavPitts98b,KakSha19,MS98,MS04,MS05,Pop96,Pop06,Pop11} for a sample), it seems that most works have dealt with functions on ball (or ``polyball") type domains and their subvarieties. 
In particular, the research that led to this paper was motivated by previous work on algebras of bounded nc functions on subvarieties of the nc unit row ball \cite{SSS18,SSS20}, and began as an attempt to understand the extent to which those results really depend on the underlying ambient nc domain i.e., the nc unit row ball\footnote{In \cite{SSS18,SSS20} $\fB_d$ was simply referred to as the {\em nc unit ball} since there was no place for confusion.}
\begin{equation}\label{eq:ball}
\fB_d = \left \{X \in \bM^d : \left \|\sum X_j X_j^* \right \| < 1\right \}. 
\end{equation}
What if the underlying domain is some other nc unit ball? 
We discovered that, on the one hand, $\fB_d$ is indeed quite special, and some results or techniques that worked in that special case are out of the question in general. 
On the other hand, we found that some important results hold in a vast generality, and in particular nc varieties are a complete invariant for algebras of nc functions, in an appropriate sense --- this required modifying many of the techniques and resulted in several results of independent interest. 
We also found that classifying nc function algebras on subvarieties embedded in different nc domains opened up new questions, to which we found unexpected answers suggesting new phenomena.

\subsection{Definitions and notation} \label{subsection_definitions_notation}

We use the theory of noncommutative (nc) functions as developed by Agler and McCarthy \cite{AM15a,AM15b,AM16}, Helton, Klep and McCullough \cite{HKM11a,HKM11b,HKM12}, and Kaliuzhnyi-Verbovetskyi and Vinnikov \cite{KVV14}. 
We also require the theory of nc reproducing kernel Hilbert spaces (RKHS) as developed by Ball, Marx and Vinnikov \cite{BMV16,BMV18}. 

\subsubsection{\textbf{NC sets}}
For $n, d \in \bN$, let $M_{n}^d$ denote the collection of $d$-tuples $X = (X_1, \dots, X_d)$ of $n \times n$ complex matrices and let $M_n = M^1_n$. 
The {\em nc universe} $\bM^d$ is defined to be the graded union
\[
\bM^d = \sqcup_{n=1}^\infty M_n^d.
\]

Recall that a {\em nc set} is a subset $\Omega \subseteq \bM^d$ that is closed under direct sums. 
A nc set $\Omega$ is said to be a {\em nc domain} if it is open at every level i.e., $\Omega(n) := \Omega \cap M_n^d$ for every $n \in \bN$. 
We shall write $\ol{\Omega}$ for the level-wise closure $\sqcup_n \ol{\Omega(n)}$.

\subsubsection{\textbf{NC functions}}
A function $f$ from a nc set $\Omega \subseteq \bM^d$ to $\mathbb{M}^l$ is said to be a {\em nc function} if: \begin{enumerate}

\item  $f$ is {\em graded}: $X \in \Omega(n) \Rightarrow f(X) \in M^l_n$;

\item $f$ {\em respects direct sums}: $X \in \Omega(n)$, $Y \in \Omega(m) \Rightarrow f(X \oplus Y) = f(X) \oplus f(Y)$; and

\item $f$ {\em respects similarities}: $S \in GL_n$, $X, S^{-1} X S \in \Omega(n) \Rightarrow f(S^{-1} X S) = S^{-1} f(X) S$.

\end{enumerate} Let $\bF_d^+$ denote the free unital semigroup generated by $d$ generators $1, \ldots, d$. 
An element $\alpha \in \bF_d^+$ is a free word $\alpha = \alpha_1 \cdots \alpha_k$ with length $|\alpha| := k \in \bN$ and $\alpha_j \in \{1, \ldots, d\}$ (for $1 \leq j \leq k$). 
We write $Z^\alpha = Z_{\alpha_1}\cdots Z_{\alpha_k}$ for the multi-variable $Z = (Z_1, \ldots, Z_d)$, and likewise for tuples of operators.
Thus, a {\em free nc polynomial} (or simply, {\em polynomial}) is a finite sum 
\[
p(Z) = \sum_{\alpha \in \bF_d^+} c_{\alpha} Z^{\alpha}
\]
that can be evaluated on every $X \in \bM^d$ in the obvious way: $p(X) = \sum_{\alpha} c_{\alpha} X^{\alpha}$. 
Free nc polynomials are evidently nc functions. We denote $ \bC \lel Z \rir := \bC\langle Z_1, \ldots, Z_d \rangle$ to be the algebra of all such free nc polynomials in $d$ (free) noncommuting variables.

More generally, given any two Hilbert spaces $\cR$ and $\cS$, an \emph{operator-valued free nc polynomial} is a map $Q : \bM^d \rightarrow \cL(\cR,\cS)_{nc}$ given by the formal expression
\[
Q(Z) = \sum_{\alpha \in \bF_d^+} Q_{\alpha} Z^{\alpha},
\]
where $Q_{\alpha} \in \cL(\cR,\cS)$ are all $0$ except for finitely many $\alpha \in \bF_d^+$. Here, we use $\cL(\cR,\cS)$ to denote the space of bounded operators between $\cR$ and $\cS$, and as in \cite{BMV16,BMV18} we identify
\begin{equation} \label{equation_ambient_operator_space}
\cL(\cR,\cS)_{nc} := \sqcup_{n = 1}^\infty \cL(\cR^n,\cS^n) = \sqcup_{n = 1}^\infty M_n(\cL(\cR,\cS)).
\end{equation}
$Q$ is interpreted functionally as $Q(X) = \sum_\alpha Q_{\alpha} \otimes X^{\alpha}, \, \forall X \in \bM^d$.

\subsubsection{\textbf{NC operator balls}}
Two nc domains that will play a leading role in this paper are the {\em nc unit row ball} \eqref{eq:ball}, and the {\em nc unit polydisk} which is defined as $\fD_d = \sqcup_{n=1}^\infty \fD_d(n)$, where 
\[
\fD_d(n) = \Big \{X \in M_n^d : \|X\|_{\infty} := \max_{1\leq j\leq d} \|X_j\| <1 \Big \}.
\]

More generally, every $d$-dimensional operator space $\cE$ defines a nc domain that we can think of as the nc unit ball of $\cE$. 
Given an operator space $\cE \subseteq \cL(\cR,\cS)$, and given a basis $\{Q_1, \ldots, Q_d\}$ for $\cE$, we consider the linear operator-valued polynomial $Q(Z) = \sum_{j=1}^d Q_j Z_j$. 
The corresponding \emph{nc operator ball} $\bD_Q$ is defined as 
\begin{equation} \label{eq:operator_ball}
\bD_Q = \{ X \in \bM^d : \| Q(X) \| < 1 \}.
\end{equation} 
For any $n \in \bN$, $\bD_Q(n)$ can be naturally identified with the open unit ball of $M_n(\cE)$ via 
\begin{equation} \label{eq:identify_DQ}
\bD_Q(n) \ni X \longleftrightarrow Q(X) \in M_n(\cE) \subset \cL(\cR^n,\cS^n).
\end{equation}
In fact, $Q$ gives rise to a linear isomorphism between $M_n^d$ and $M_n(\cE)$ for every $n \in \bN$. This shows that $\bD_Q$ is nonempty, open and bounded (as it is the preimage of the unit ball under a linear isomorphism). 
Note that every choice of basis $\{Q_1, \ldots, Q_d\}$ for $\cE$ gives a different domain $\bD_Q$, but all these nc domains are in some sense equivalent. 

\begin{example} 
Both $\fB_d$ and $\fD_d$ are nc operator balls: 
\begin{enumerate}	
\item If $Q(Z) = [ Z_1 \dots Z_d ]$ then $\bD_Q = \fB_d$, corresponding to the row operator Hilbert space. 
\item If $Q(Z) = \diag(Z_1, \dots, Z_d)$ then $\bD_Q = \fD_d$, corresponding to $\cE = \min(\ell^\infty(\bC^d))$. 
\end{enumerate}
\end{example}

\begin{remark} 
One might wish to generalize further and to consider also domains $\bD_Q$ given by a polynomial $Q$ of higher degree.
However, we shall require our domain to be \emph{homogeneous} i.e., $\lambda \bD_Q \subseteq \bD_Q$ for $\lambda \in \bD$, and it is not clear how to guarantee that without requiring that $Q$ be homogeneous i.e., there exists $q \in \bN$ such that 
\[
Q(\lambda Z) = \lambda^q Q(Z), \, \forall \lambda \in \bC.
\]
In this case, if $q\geq 2$ then the corresponding \emph{nc homogeneous polyhedron} $\bD_Q$ defined by \eqref{eq:operator_ball} will always be unbounded since it contains all \emph{jointly $q$-nilpotent $d$-tuples}: 
\[
X = (X_1, \dots, X_d) \text{ s.t. } X^\alpha = 0, \, \forall \alpha \in \bF_d^+ \text{ with } |\alpha| = q.
\] 
In particular, the coordinate functions $Z_1, \dots, Z_d$ will be unbounded on such a polyhedron.
\end{remark}

\subsubsection{\textbf{NC function algebras}}
It is well-known that bounded nc functions are automatically continuous and holomorphic in all \emph{nc topologies} of interest (see \cite[Chapter 7]{KVV14}).
Let $H^\infty(\bD_Q)$ denote the algebra of bounded nc functions on $\bD_Q$, equipped with the sup-norm:
\[
\|f\|_\infty := \sup_{X \in \bD_Q} \|f(X)\|, \, \forall f \in H^\infty(\bD_Q).
\] 
This algebra contains all free nc polynomials, and also all nc power-series with coefficients that converge sufficiently fast. 
In fact, since $\bD_Q$ is \emph{uniformly-open} (see Section \ref{subsection_bounded_nc_maps_on_D_Q}), \cite[Theorem 7.21]{KVV14} shows us that every $f \in H^\infty(\bD_Q)$ has a global nc power-series representation:
\[
f(X) = \sum_{\alpha \in \bF_d^+} c_\alpha X^\alpha, \, \forall X \in \bD_Q.
\]
We let $A(\bD_Q)$ denote the closure of the free nc polynomials in $H^\infty(\bD_Q)$.

Matrix valued $H^\infty(\bD_Q)$ functions can be naturally endowed with a family of matrix norms: 
\[
\left\| \left [ F_{j.k}(Z) \right ] \right\|_{m \times n} := \sup_{X \in \bD_Q} \left \| \left[F_{j,k}(X)\right] \right \|
\] 
for all $\left[F_{j,k}\right] \in M_{m \times n}\left(H^\infty(\bD_Q)\right) \textrm{ and } m,n \in \bN$, which satisfy the Blecher-Ruan-Sinclair axioms. Thus, $H^\infty(\bD_Q)$ and $A(\bD_Q)$ are operator algebras (see \cite[Theorem 2.3.2]{BLM04}). 

\subsubsection{\textbf{NC varieties and their algebras}}
A {\em nc subvariety} of $\bD_Q$ is a nc set of the form 
\[
\fV = \{X \in \bD_Q : f(X) = 0, \, \forall f \in S\},
\]
where $S$ is some subset of $H^\infty(\bD_Q)$. 
We define $H^\infty(\fV)$ to be the algebra of bounded nc functions on $\fV$, and $A(\fV)$ to be the norm-closure of polynomials in $H^\infty(\fV)$. 
The algebras $H^\infty(\fV)$ and $A(\fV)$ are also operator algebras, clearly.

\subsection{Main results} \label{subsection_main_results}

In \cite{SSS18} the operator algebras $H^\infty(\fV)$ for general subvarieties $\fV \subseteq \fB_d$, and the algebras $A(\fV)$ for homogeneous subvarieties $\fV \subseteq \fB_d$, were classified up to completely isometric isomorphism. 
The main results can be summarized by saying, roughly, that two algebras are completely isometrically isomorphic if and only if the varieties they live on are biholomorphic; further, it was shown that if the varieties are biholomorphic then one must, in fact, be the image of the other under an automorphism of the nc unit ball (this requires also later results from \cite{BS+,Sham18}) and in the homogeneous case under a unitary map. 
Classification up to bounded isomorphism was taken up in \cite{SSS20}. 

These works relied on the special properties of the nc unit row ball in several ways. 
The fact that $H^\infty(\fB_d)$ happens to be the nc multiplier algebra of a nc reproducing kernel Hilbert space allowed to represent it naturally, and work with a weak-operator topology and use spatial techniques such as dilation and compression. 
The fact that this nc RKHS has the complete Pick property further helped extend functions and maps from subvarieties to $\fB_d$. 
The fact that $H^\infty(\fB_d)$ can be identified with the noncommutative analytic Toeplitz algebra $\fL_d$ allowed us to use the worked out functional calculus and representation theory due to Arias and Popescu \cite{AriPop00}, Davidson and Pitts \cite{DavPitts98b}, Muhly and Solel \cite{MS11}, and Popescu \cite{Pop95}. 
Using this representation theory nc varieties were identified as representation spaces, and then further geometric properties specific to the nc unit row ball were used to analyze mappings between these balls. 
The classification up to bounded isomorphism was made possible by a detailed understanding of the similarity envelope $\widetilde{\fB}_d$ of the nc unit row ball. 

In this paper we set out to study similar questions in the vast generality of nc operator balls, where most of the above mentioned tools are missing. 
We now outline each remaining section of this paper for the reader's convenience.

\subsubsection{\textbf{Section \ref{section_nc_function_theory_in_polydisk}}}

Our work begins with the crucial observation that it is not possible to identify the algebras $H^\infty(\fV)$ as multiplier algebras of nc RKHSs in general. 
To be precise, in Theorem \ref{thm:mult_poly} we show that $H^\infty(\fD_d)$ is not the multiplier algebra of any nc RKHS for which the monomials form an orthogonal basis.

One might think that maybe there is a natural quantization of the Hardy space of the commutative unit polydisk for which the multiplier algebra is not $H^\infty(\fD_d)$ but may still be an algebra worth investigating. However, in Theorem \ref{thm_multiplier_algebra_nc_hardy_space} we show that one such natural quantization has no non-trivial multipliers.

We also compute the C*-envelope of $A(\fD_d)$ in Theorem \ref{thm:C*-envelope_A(D_d)}, and show that it is equal to the full group C*-algebra $C^*(\bF_d)$ of the free group $\bF_d$. This is in striking contrast to what happens in the case of $\fB_d$, where $C^*_e(A(\fB_d)) = \cO_d$ --- the Cuntz algebra \cite{MS98}.

\subsubsection{\textbf{Section \ref{section_nc_operator_balls}}}

With some of our hopes extinguished, we continue in the remainder of the paper to see what we can figure out, after all. In Theorem \ref{theorem_bdy_value_principle}, we prove a boundary value principle that replaces the maximum modulus principle in the nc unit row ball. In the remainder of the section, we study the structure of $A(\bD_Q)$ and $H^\infty(\bD_Q)$ as topological algebras.

\subsubsection{\textbf{Section \ref{section_cAV_as_quotients}}}

Let us use $(\cA,\tau)$ to denote either $A(\bD_Q)$ with the norm topology, or $H^\infty(\bD_Q)$ with the topology of bounded-pointwise convergence. Likewise, we let $\cA(\fV)$ be either $A(\fV)$ or $H^\infty(\fV)$. In Theorem \ref{thm_A(V)_is_quotient_of_A(D_d)}, we show that the restriction map $F \mapsto F\big|_{\fV}$ is a complete surjection that induces a completely isometric isomorphism between $\cA/I(\fV)$ and $\cA(\fV)$. When $\cA = A(\bD_Q)$ this is more difficult and we need to assume that $\fV$ is homogeneous.

For homogeneous ideals, we also have a Nullstellensatz (Theorem \ref{thm_nullstellensatz_for_homogenous_poyhedra}) that clarifies the nature of the quotient; see Corollary \ref{corollary_isomorphism_subalgebras_of_A}. We close the section with Proposition \ref{proposition_uniform_continuity_of_A(V)_functions} and show that $A(\fV)$ is precisely the algebra of nc functions that are uniformly continuous on $\fV$. 

\subsubsection{\textbf{Section \ref{section_representations}}}

We study the finite dimensional completely contractive (cc) representations of the algebras $\cA(\fV)$. 
In general, what we can say is limited: there exists a continuous surjection $\pi: \Rep^{cc}_n(\cA) \to \ol{\bD_Q}$ given by evaluating the representation on the coordinate functions:
\[
\pi(\Phi) = (\Phi(Z_1), \ldots, \Phi(Z_d)), \, \forall \Phi \in \Rep^{cc}_n(\cA).
\]

In the other direction, every $X \in \fV$ gives rise to an evaluation map $\Phi_X \in \Rep^{cc}_n(\cA(\fV))$ given by $\Phi_X(f) = f(X)$, for every $f \in \cA(\fV)$. For the algebras $A(\fV)$ when $\fV$ is a homogeneous variety, these maps provide us with the identification $\Rep^{cc}_n(A(\fV)) \cong \ol{\fV}$.

\subsubsection{\textbf{Section \ref{section_classification_of_A(V)}}}

The above concrete description of the finite dimensional cc representations allows us to obtain a basic classification result (Theorem \ref{thm:iso_cont}) for homogeneous subvarieties of nc operator balls. We then focus on certain special $\bD_Q$ which correspond to injective operator spaces and that we refer to as \emph{injective nc operator balls}. 
The remarkable work of Ball, Marx and Vinnikov \cite{BMV18} gives us a powerful extension theorem (restated in Theorem \ref{theorem_BMV_interpolation_full_sets}), that we refine in Theorem \ref{thm:extend_to_ball} to the statement that a nc map on a relatively full subset of a nc operator ball $\bD_{Q_1}$ mapping into an injective nc operator ball $\bD_{Q_2}$ can be extended to a nc map defined on the whole of $\bD_{Q_1}$ that maps it into $\bD_{Q_2}$. This ``nonlinear injectivity" phenomenon takes the place of the complete Pick property used for $\fB_d$.

In Theorem \ref{thm:iso_cont_1-injective}, we fully solve the isomorphism problem for $A(\fV)$ when $\fV$ is a homogeneous subvariety of some injective nc operator ball $\bD_Q$ by showing that if $\fV_i \subseteq \bD_{Q_i}$ are homogeneous subvarieties (for $i=1,2$) then there exists a completely isometric isomorphism $\varphi: A(\fV_1) \to A(\fV_2)$ if and only if there exists a uniformly continuous nc biholomorphism $F \colon \fV_2 \rightarrow \fV_1$ with a uniformly continuous inverse such that $\varphi(f) = f\circ F, \foral f \in A(\fV_1)$.

We analyze certain rigidity properties of maps between homogeneous varieties, and show that if $\fV_1$, $\fV_2$ above are also {\em matrix-spanning} then they are biholomorphic in a stronger sense. To be precise, in Theorem \ref{theorem_cartan_uniqueness_for_subvariety} we show that there exists a nc biholomorphism $\widetilde{F} : \bD_{Q_2} \to \bD_{Q_1}$ such that $\widetilde{F}\vert_{\fV_2} = F$, where $F$ is as above. We provide a concrete description of matrix-spanning subvarieties in Theorem \ref{theorem_mat-spanning_subvarieties}, and discuss (with examples) the situation for nc operator balls that are not injective in Section \ref{subsection_examples}.

In Theorem \ref{theorem_nc_biholomorphism_that_is_linear}, we provide a general result that shows that the existence of a nc biholomorphism between $\bD_{Q_1}$ and $\bD_{Q_2}$ that maps homogeneous subvarieties $\fV_1$ onto $\fV_2$ implies the existence of a {\em linear isomorphism} between the balls that maps $\fV_1$ onto $\fV_2$.
In order to prove this theorem, we require a new approach based on Braun, Kaup and Upmeier's rigidity results \cite{BKU78}. Our ultimate classification result is Theorem \ref{theorem_isomorphism_A(D_Q)_with_nc_biholomorphism_of_domains}, which says that for matrix spanning varieties, the following statements are equivalent: 
\begin{enumerate}

\item There exists a completely isometric isomorphism $\alpha : A(\fV_1) \rightarrow A(\fV_2)$.

\item There exists a nc biholomorphism $G : \bD_{Q_1} \to \bD_{Q_2}$ such that $G(\fV_1) = \fV_2$.

\item There exists a linear isomorphism $L : \bD_{Q_1} \rightarrow \bD_{Q_2}$ such that $L({\fV_1}) = \fV_2$.

\end{enumerate}

\subsubsection{\textbf{Section \ref{section_concluding}}}

We close this paper with some concluding remarks that put our concrete results into an abstract-minded perspective. 
After all, with every operator space we have associated a natural function algebra. 
What is this algebra? 
By reinterpreting some of our results we show that it is the \emph{universal free unital algebra} over the dual $\cE^*$, leading to some insights.

\section{\textbf{Bounded Functions Vs. Multipliers: The NC Unit Polydisk as a Test Case}} \label{section_nc_function_theory_in_polydisk}

In commutative function theory, RKHSs and their multiplier algebras play a prominent role \cite{AMBook}. 
Multiplier algebras are closely related to bounded analytic functions. 
For instance, the Hardy space $H^\infty(D)$ for a bounded domain $D$ always occurs as the multiplier algebra of some RKHS. 
Conversely, for most natural RKHSs on a bounded domain $D \subset \bC^d$, the multiplier algebra is contained in $H^\infty(D)$, and in many cases equal to it. 
For the Drury-Arveson space $H^2_d$ --- a RKHS on the commutative unit ball $\bB_d$ --- the multiplier algebra is famously {\em not} equal to $H^\infty(\bB_d)$ (see \cite{Arv98,DavPitts98a}). However, in \cite[Section 11]{SSS18} it was shown that $\Mult(H^2_d)$ can be naturally identified with $H^\infty(\fC\fB_d)$, namely, the algebra of bounded nc functions on the subvariety $\fC\fB_d \subset \fB_d$ that consists of all commuting row contractions. 
Furthermore, when $\fV \subseteq \fB_d$, then $H^\infty(\fV)$ is the algebra of nc (left) multipliers on a certain nc RKHS on $\fV$, and in particular $H^\infty(\fB_d)$ is the multiplier algebra of the so-called nc Drury-Arveson space $\cH^2_d$. 
This was used extensively in the study of these algebras in \cite{SSS18,SSS20}.

As noted in Section \ref{subsection_definitions_notation}, the algebras $H^\infty(\bD_Q)$ are always abstract operator algebras. 
This raises the question of whether these operator algebras have an intuitive faithful representation on a Hilbert space, and in particular whether $H^\infty(\bD_Q)$ is the multiplier algebra of a nc RKHS on $\bD_Q$.
In this section, we show that in the noncommutative world the connection between bounded analytic functions and multiplier algebras is broken.

First, we show that multipliers need not be bounded at all, but they will always be \emph{spectrally bounded}. 
Then, we turn to exhibit several obstacles to studying function theory on the nc unit polydisk via nc RKHSs.
The theory of nc RKHSs was developed by Ball, Marx and Vinnikov in \cite{BMV16}. A nc RKHS is a Hilbert space $\cH$ of nc functions on a nc set $\Omega$ such that for every $X \in \Omega(n)$ and every $v,y \in \bC^n$ there is a \emph{kernel function} $K_{X,v,y} \in \cH$ such that 
\[
\langle h(X)v, y \rangle_{\bC^n} = \langle h , K_{X,v,y} \rangle_\cH, \, \forall h \in \cH.
\]
The algebra $\Mult(\cH)$ of nc (left) multipliers consists of all nc functions $f: \Omega \to \bM^1$ such that the point-wise product $f \cdot h$ is in $\cH$ for all $h\in \cH$.

\subsection{Boundedness of multipliers} \label{subsection_on_boundedness_of_multipliers}

By a basic result in the theory of RKHS, every multiplier $f$ on a (classical) RKHS is bounded. In fact, \cite[Equation 2.33]{AMBook} shows that
\[
\|f\|_\infty \leq \|M_f\|.
\] 
In the noncommutative setting, the following example shows that this no longer holds. 

\begin{example}
Let $\cH_d^2$ be the nc Drury-Arveson space on the nc unit row ball $\fB_d$. 
It follows from \cite[Corollary 3.7]{SSS18} that $\mlt(\cH_d^2) = H^\infty(\fB_d)$ completely isometrically, and therefore $\|f\|_\infty = \|M_f\|$. 
However, as in \cite{SSS20}, every nc function on $\fB_d$ can be extended uniquely as a nc function to the \emph{similarity envelope} $\widetilde{\fB}_d$ of $\fB_d$: \[
\widetilde{\fB_d} := \{ X \in \bM^d \, : \, S^{-1} \cdot X \cdot S := (S^{-1}X_1S,\dots,S^{-1}X_dS) \in \fB_d, \, S \text{ -- invertible} \}.
\] Therefore $\cH_d^2$ becomes also a nc RKHS on $\widetilde{\fB}_d$, and $H^\infty(\fB_d)$ determines its multiplier algebra by way of extension. 
Clearly, the coordinate function $Z_1 : (X_1, \ldots, X_d) \mapsto X_1$ is a bounded multiplier of norm $1$, however as a function on $\widetilde{\fB}_d$ it is unbounded.
\end{example}

\begin{remark}
	
We do not have such an example when the domain $\Omega \subset \bM^d$ of the nc RKHS $\mathcal{H}$ is bounded, nor does it follow immediately from the definitions that multipliers of $\mathcal{H}$ must be bounded nc functions on $\Omega$. We can however, even with no boundedness assumption on $\Omega$, show that multipliers must always be \emph{spectrally bounded}.

\end{remark}

Let $\mathcal{H}$ be a Hilbert space. Recall that the {\em spectral radius} of $T \in \cL(\mathcal{H})$ is defined to be \[
\rho(T) = \sup\{|\lambda| : \lambda \in \sigma(T) \},
\] where $\sigma(T)$ is the spectrum of $T$. Gelfand's spectral radius formula states that \[
\rho(T) = \lim_{n \to \infty} \|T^n\|^{1/n}.
\]

\begin{proposition}
Let $\cH$ be a nc RKHS on a nc set $\Omega$, and suppose that either $(i)$ the constant function $1 $ lies in $ \cH$; or that $(ii)$ for all $n$, $X \in \Omega(n)$ and $y \in \bC^n$, there exists $v \in \bC^n$ such that $K_{X,v,y} 
\neq 0$. 
Then, for all $f \in \mlt(\cH)$ we have
\[
\sup_{X \in \Omega} \rho(f(X)) \leq \|M_f\|.
\]
\end{proposition}
\begin{proof}
For $X \in \Omega_n$ and unit vectors $v,y \in \bC^n$, consider the kernel function $K_{X,v,y}$. 
If we assume that $1 \in \cH$, then
\[
\langle f(X) v, y \rangle = \langle M_f 1 , K_{X,v,y} \rangle, \, \forall f \in \mlt(\cH).
\]
Taking the supremum over unit vectors $v,y \in \bC^n$ we get \[
\|f(X)\| \leq C_X \|M_f\|,
\] where the constant $C_X$ satisfies \[
C_X = \|1\| \sup_{\|v\|=\|y\|=1} \|K_{X,v,y}\| \leq \| 1 \| \sqrt{\|K(X,X)\|}.
\] Applying this inequality to the powers of $f$ we get \[
\|f(X)^n\| \leq C_X \|M_f^n\| \leq C_X \|M_f\|^n, \, \forall n \in \bN.
\] Taking $n^{\text{th}}$ roots on both sides and letting $n \to \infty$ we obtain \[
\rho(f(X)) \leq \|M_f\|.
\] As $X \in \Omega_n$ was arbitrarily chosen, we obtain the claimed inequality.

Alternatively, if $X \in \Omega(n)$, let $\lambda \in \sigma(f(X))$ and $y \in \bC^n$ be such that $f(X)^*y = \ol{\lambda} y$. 
By assumption, there exists $v \in \bC^n$ such that $K_{X,v,y} \neq 0$. 
Then we have 
\[
\|M^*_f K_{X,v,y}\| = \|K_{X,v,f(X)^*y}\| = |{\lambda}| \|K_{X,v,y} \|, 
\]
whence $| \lambda | \leq \|M_f\|$, as required. 
\end{proof}

\subsection{$H^\infty(\fD_d)$ is not a multiplier algebra} \label{subsection_no_Hinfty_multipliers}

The following result shows that $H^\infty(\fD_d)$ can never be the multiplier algebra of any `reasonable' nc RKHS on $\fD_d$.

\begin{theorem}\label{thm:mult_poly}
Fix $d \geq 2$. Let $\cH$ be a nc RKHS on $\fD_d$ that contains the free nc polynomials, and for which the nc monomials form an orthogonal basis. 
Then, $\mlt(\cH) \neq H^\infty(\fD_d)$. 
\end{theorem}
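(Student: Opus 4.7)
The plan is to suppose for contradiction that such an $\cH$ exists with $\mlt(\cH) = H^\infty(\fD_d)$ identified as (operator) algebras of nc functions, so that in particular the multiplier norm of each coordinate function equals its sup-norm. Writing $c_\alpha := \|Z^\alpha\|_\cH^2 > 0$ (normalized so that $c_\emptyset = 1$), the orthogonality of monomials shows that $M_{Z_j}$ acts as a weighted shift sending $Z^\alpha \mapsto Z^{j\alpha}$. Hence $\|M_{Z_j}\|^2 = \sup_\alpha c_{j\alpha}/c_\alpha$, and the identification together with $\|Z_j\|_\infty = 1$ forces the weight estimate
\[
c_{j\alpha} \leq c_\alpha \qforal j \in \{1,\dots,d\}, \, \alpha \in \bF_d^+.
\]

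The key orthogonality-driven observation is that $M_{Z_j}^* M_{Z_k} = 0$ whenever $j \neq k$. A routine inner product calculation gives $M_{Z_j}^* Z^\beta = 0$ unless $\beta$ begins with the letter $j$ (and equals $(c_\beta/c_{\beta'})Z^{\beta'}$ when $\beta = j\beta'$), while $M_{Z_k}Z^\alpha = Z^{k\alpha}$ manifestly begins with $k$. Consequently, for the polynomial $p(Z) = Z_1 + \cdots + Z_d$, the cross terms in $(\sum_j M_{Z_j})^*(\sum_k M_{Z_k})$ vanish and
\[
\|M_p\|^2 = \left\| \sum_j M_{Z_j}^* M_{Z_j} \right\|.
\]
Each summand $M_{Z_j}^* M_{Z_j}$ is diagonal in the monomial basis with eigenvalue $c_{j\alpha}/c_\alpha \leq 1$ on $Z^\alpha$, so their sum has eigenvalues $\sum_j c_{j\alpha}/c_\alpha \leq d$, giving $\|M_p\| \leq \sqrt{d}$. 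On the other hand, evaluating $p$ at the scalar tuple $(\lambda,\dots,\lambda) \in \fD_d(1)$ and letting $\lambda \to 1^-$ shows $\|p\|_\infty = d$. Under the presumed identification this equals $\|M_p\|$, so $d \leq \sqrt{d}$, contradicting $d \geq 2$.

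The one point that will require care in the write-up is the precise sense of the equality $\mlt(\cH) = H^\infty(\fD_d)$: the argument really only needs that the multiplier norm of each $Z_j$ (and of $Z_1 + \cdots + Z_d$) agrees with the sup-norm, which is immediate under an isometric identification. If only set-theoretic equality of the two algebras is granted, I would invoke the closed graph theorem (the two complete Banach algebra structures pull back to continuous pointwise evaluations, hence share a closed graph for the identity) to produce equivalent norms and then rescale the coordinate functions so that the weighted-shift estimate $c_{j\alpha} \leq c_\alpha$ kicks in. The main ingredient — vanishing of the off-diagonal products $M_{Z_j}^*M_{Z_k}$ from the orthogonality of noncommuting monomials — is what makes the sum $Z_1 + \cdots + Z_d$ behave like an orthogonal sum in the multiplier norm while it behaves like an \emph{aligned} sum in the sup-norm, and this mismatch of size $\sqrt{d}$ versus $d$ is the heart of the obstruction.
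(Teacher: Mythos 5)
Your argument is correct, and it turns on the same mechanism as the paper's proof: since the nc monomials are orthogonal and $M_{Z_j}Z^\alpha = Z^{j\alpha}$, the ranges of $M_{Z_1},\dots,M_{Z_d}$ are mutually orthogonal, so the coordinate multipliers combine ``Pythagorean-ly'' inside $\mlt(\cH)$ while the coordinate functions combine additively in $H^\infty(\fD_d)$. The only difference is the test element: the paper uses the row $[Z_1\ \cdots\ Z_d]$, comparing its $M_{1,d}(H^\infty(\fD_d))$-norm $\sqrt{d}$ with its row-operator norm $\max_j\|M_{Z_j}\|\le 1$, whereas you use the scalar sum $p=Z_1+\cdots+Z_d$, comparing $\|p\|_\infty=d$ with $\|M_p\|=\big\|\sum_j M_{Z_j}^*M_{Z_j}\big\|^{1/2}\le\sqrt{d}$. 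This is a mild sharpening: your contradiction lives entirely at the first matrix level, so it excludes even a merely isometric (rather than completely isometric, or $M_{1,d}$-isometric) identification of the two algebras. One caveat on your closing remark: if one grants only set-theoretic equality and extracts equivalent norms $C^{-1}\|f\|_\infty\le\|M_f\|\le C'\|f\|_\infty$ via the closed graph theorem, the contradiction degrades to $d\le CC'\sqrt{d}$, which is vacuous for a fixed $d$, and iterating with $p^n$ only closes the gap when $\max_j\|M_{Z_j}\|<\sqrt{d}$; since the paper itself reads the equality isometrically, this does not affect your main argument, but the fallback as sketched would need more work.
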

\begin{proof}
Suppose that $\mlt(\cH) = H^\infty(\fD_d)$. 
Consider the coordinate functions $Z_1, \ldots, Z_d$ and form the row operator $[M_{Z_1} \ldots M_{Z_d}] \in M_{1,d}(H^\infty(\fD_d))$. Then, on the one hand, we have
\[
\left\|\left[M_{Z_1} \ldots M_{Z_d}\right]\right\| = \sup_{X \in \fD_d} \left\|\left[X_1 \ldots X_d \right ] \right \| = \sqrt{d} > 1.
\]
The second equality holds since the Cauchy-Schwarz inequality implies an inequality `$\leq \sqrt{d}$':
\begin{align*}
\left\| \sum_{j = 1}^d X_j h_j \right \|^2 &\leq \left(\sum_{j=1}^d  \| X_j h_j \|\right)^2 \\
& \leq \left(\sum_{j=1}^d \|X_j h_j\|^2 \right) \left(\sum_{j=1}^d 1^2 \right) \\
&\leq d, \, \forall X \in \fD_d,
\end{align*}
and the choice of $X = [I_n \dots I_n]$ for some identity matrix $I_n$ implies an inequality `$\geq \sqrt{d}$'.

On the other hand, if $\mlt(\cH) = H^\infty(\fD_d)$ then we have \[
\|M_{Z_j}\| = \sup_{X  \in \fD_d} \|X_j\| = 1, \, \forall 1 \leq j \leq d.
\]
If the monomials are orthogonal, the multipliers $M_{Z_1}, \ldots, M_{Z_d}$ have orthogonal ranges. Thus,
\[
\|[M_{Z_1} \ldots M_{Z_d}]\|^2 = \left\|\sum_{j=1}^d M_{Z_j} M_{Z_j}^*\right\| \leq 1. 
\]
It follows that if $\cH$ is as in the statement of the proposition then $\mlt(\cH) \neq H^\infty(\fD_d)$.
\end{proof}



\subsection{Candidate for a NC Hardy space on $\fD_d$} \label{subsection_candidate_for_nc_Hardy_space}
The above result motivates us to come up with examples of Hilbert spaces of nc functions on the nc unit polydisk, and to understand their multiplier algebras. 
We use the commutative unit polydisk $\bD^d := \bD \times \dots \times \bD$ as inspiration. 
It is well-known that for $\bD^d$, the algebra $H^\infty(\bD^d)$ of bounded holomorphic functions on $\bD^d$ is the multiplier algebra of the classical {\em Hardy space $H^2(\bD^d)$}
\[
H^2(\bD^d) = \left\{ f \sim \sum_{\alpha \in \bN^d} c_\alpha z^\alpha : \sum_{\alpha \in \bN^d} |c_\alpha|^2 < \infty \right\}.
\]

The corresponding Hilbert space of formal nc power-series with square summable coefficients turns out to be the nc Drury-Arveson space $\cH_d^2$, but it is well-known that $\cH_d^2$ {\em symmetrizes} to the commutative Drury-Arveson space on the unit-ball $\bB_d$ instead of $H^2(\bD^d)$. Therefore it is natural to ask if a nc RKHS that symmetrizes to $H^2(\bD^d)$ exists. In this section we find a candidate for such a space, and in the next one we show that it compresses to the classical Hardy space on the polydisk $\bD^d$, but that it has no non-constant multipliers.

Popa and Vinnikov provided candidates for such a space in \cite{PV18} using asymptotic integral formulae on the Shilov boundary of $\fD_d$. While these spaces have interesting properties of their own, they are not nc RKHS on $\fD_d$ (see \cite[Remark 3.10]{PV18}). We therefore construct a nc RKHS on $\fD_d$ that preserves the symmetries of $\fD_d$ and also compresses onto $H^2(\bD^d)$. 

Let $\bF_d^+$ be the free semi-group on $d$ generators. 
Every $\alpha \in \bF_d^+$ determines a multi-index $\tilde{\alpha} \in \bN_d$ given by $\tilde{\alpha} = (\tilde{\alpha}_1, \ldots, \tilde{\alpha}_d)$, where $\tilde{\alpha}_j$ is equal to the number of times $j$ appears in $\alpha$. 
We shall abuse notation and use the same letter $\alpha$ to denote both a noncommutative word and the multi-index $\tilde{\alpha}$ that corresponds to it.
Define the \emph{nc Hardy space on $\fD_d$} as
\[
H^2(\fD_d) = \left\{ f \sim \sum_{\alpha \in \bF_d^+} c_\alpha Z^\alpha : \sum_{\alpha \in \bF_d^+} \frac{|\alpha|!}{\alpha !} |c_\alpha|^2 < \infty \right\},
\]
where $|\alpha| !$ and $\alpha !$ are the multinomial expressions when $\alpha$ is regarded as an element of $\bN^d$. 
Note that it is not obvious that such a formal nc power-series converges on $\fD_d$. We provide a short proof for the sake of completeness.

\begin{proposition} \label{prop_convergence_of_nc_powerseries_on_D_d}
Let $f \sim \sum_{\alpha} c_\alpha Z^\alpha$ be a formal nc power-series such that
\[
\sum_{\alpha \in \bF_d^+} \frac{|\alpha| !}{\alpha !} |c_\alpha|^2 < \infty.
\]
Then, $f$ converges absolutely on $\fD_d$. 
Furthermore if $X \in \fD_d$ with $ \|X\|_\infty \leq r < 1 $, then 
\[
\|f(X)\| \leq \left( \frac{1}{1-r} \right)^d \left( \sum_{\alpha \in \bF_d^+} \frac{|\alpha!|}{\alpha!} |c_\alpha|^2 \right)^{1/2}.
\]
\end{proposition}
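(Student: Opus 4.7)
The plan is to reduce the question to an absolutely convergent scalar estimate by first dominating each term, then applying Cauchy--Schwarz with the weighting $|\alpha|!/\alpha!$ that appears in the hypothesis, and finally recognizing the resulting ``dual" series as a concrete geometric product.

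First I would fix $X \in \fD_d$ and any $r$ with $\|X\|_\infty \leq r < 1$. Because $\|X^\alpha\| \leq \|X_{\alpha_1}\| \cdots \|X_{\alpha_{|\alpha|}}\| \leq r^{|\alpha|}$, an upper bound on $\|f(X)\|$ is controlled by the scalar series $\sum_{\alpha \in \bF_d^+} |c_\alpha| r^{|\alpha|}$, so it suffices to bound this quantity and, along the way, show it is finite (this is exactly the absolute convergence statement).

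The key maneuver is to split $|c_\alpha| r^{|\alpha|}$ as $\sqrt{|\alpha|!/\alpha!}\,|c_\alpha| \cdot \sqrt{\alpha!/|\alpha|!}\,r^{|\alpha|}$ and apply Cauchy--Schwarz in $\ell^2(\bF_d^+)$. This produces the square root of the given hypothesis times
\[
\left( \sum_{\alpha \in \bF_d^+} \frac{\alpha!}{|\alpha|!} r^{2|\alpha|} \right)^{1/2}.
\]
To evaluate the latter, I would regroup the free sum over $\alpha \in \bF_d^+$ by the commutative multi-index $\beta = \tilde{\alpha} \in \bN^d$. The number of free words $\alpha$ with $\tilde{\alpha} = \beta$ is exactly the multinomial coefficient $|\beta|!/\beta!$, so the sum collapses to $\sum_{\beta \in \bN^d} r^{2|\beta|} = (1-r^2)^{-d}$ by the product/geometric series identity.

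Putting the pieces together gives $\|f(X)\| \leq (1-r^2)^{-d/2} \bigl(\sum_\alpha \tfrac{|\alpha|!}{\alpha!}|c_\alpha|^2\bigr)^{1/2}$, and the elementary inequality $1-r^2 = (1-r)(1+r) \geq 1-r$ for $r \in [0,1)$ yields $(1-r^2)^{-d/2} \leq (1-r)^{-d}$, which is the stated bound. Absolute convergence on $\fD_d$ follows at once since every $X \in \fD_d$ admits such an $r$, and the estimate also shows that convergence is uniform on each ``subball" $\{\|X\|_\infty \leq r\}$. I do not expect any serious obstacle here; the only mild point of care is the bijective reparametrization of free words by commutative multi-indices via the multinomial coefficient, which is the sole place where the specific weight $|\alpha|!/\alpha!$ in the definition of $H^2(\fD_d)$ is actually used.
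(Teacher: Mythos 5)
Your proof is correct, and while it rests on the same core idea as the paper's (Cauchy--Schwarz against the weight $|\alpha|!/\alpha!$, plus the count of $|\beta|!/\beta!$ free words per commutative multi-index $\beta$), the execution is genuinely different and cleaner. The paper applies Cauchy--Schwarz degree by degree, identifies $\sum_{|\alpha|=k}\alpha!/|\alpha|!$ as the number $\binom{k+d-1}{d-1}$ of commutative monomials of degree $k$, and then uses the crude bound $\binom{k+d-1}{d-1}^{1/2}\leq\binom{k+d-1}{d-1}$ to sum the resulting series to $(1-r)^{-d}$. You instead fold the factor $r^{|\alpha|}$ into the second Cauchy--Schwarz slot and apply the inequality once, globally over $\bF_d^+$; the dual sum $\sum_\alpha (\alpha!/|\alpha|!)\,r^{2|\alpha|}$ collapses exactly to $\sum_{\beta\in\bN^d}r^{2|\beta|}=(1-r^2)^{-d}$, giving the sharper constant $(1-r^2)^{-d/2}$, from which the stated $(1-r)^{-d}$ follows via $1-r^2\geq 1-r$ and $(1-r)^{-d/2}\leq(1-r)^{-d}$. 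Both arguments are valid; yours avoids the lossy binomial step and yields a strictly better estimate, at the cost of nothing.
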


\begin{proof}
Let $f$ be as in the hypothesis, and fix $X \in \fD_d$ with $ \|X\|_\infty \leq r < 1 $ for some $r > 0$. 
Note that
\begin{equation} \label{equation_hardy_space_pseries_convergence_1}
 \sum_{\alpha \in \bF_d^+} \| c_\alpha X^\alpha \| = \sum_{\alpha \in \bF_d^+} |c_\alpha| \, \| X^\alpha \| \leq \sum_{k = 0}^{\infty} \left( \sum_{|\alpha| = k} |c_\alpha| \right) r^k.
\end{equation}

Using the Cauchy-Schwarz inequality, we obtain
\begin{align} \label{equation_hardy_space_pseries_convergence_2}
\sum_{|\alpha| = k} |c_\alpha| &\leq \left( \sum_{|\alpha| = k} \frac{\alpha !}{|\alpha|!} \right)^{1/2} \left( \sum_{|\alpha| = k} \frac{|\alpha| !}{\alpha !} |c_\alpha|^2 \right)^{1/2} \\
\label{equation_hardy_space_pseries_convergence_3}&\leq {k + d - 1 \choose d - 1} \left( \sum_{|\alpha| = k} \frac{|\alpha| !}{\alpha !} |c_\alpha|^2 \right)^{1/2}.
\end{align}
(\ref{equation_hardy_space_pseries_convergence_3}) holds because $|\alpha|!/\alpha!$ is precisely the number of nc monomials that reduce to the same commutative monomial $x^\alpha$ (or, if you prefer, the number of free words that give rise to the same multi-index), and there are ${k + d - 1 \choose d - 1}$ many distinct commutative monomials of length $k$. Then, we get (\ref{equation_hardy_space_pseries_convergence_3}) using the trivial inequality
\[
{k + d - 1 \choose d - 1}^{1/2} \leq {k + d - 1 \choose d - 1}.
\]
Combining (\ref{equation_hardy_space_pseries_convergence_1}) and (\ref{equation_hardy_space_pseries_convergence_3}) gives us 
\[
\sum_{\alpha \in \bF_d^+} \| c_\alpha X^\alpha\| \leq \left( \frac{1}{1-r} \right)^d \left( \sum_{\alpha \in \bF_d^+} \frac{|\alpha| !}{\alpha !} |c_\alpha|^2 \right)^{1/2}.
\] 
Therefore, $f$ converges absolutely on $\fD_d$. 
The second part follows easily from the above bound and the fact that $\|f(X)\| \leq \sum_{\alpha} \| c_\alpha X^\alpha \|$.
\end{proof}

The first part of Proposition \ref{prop_convergence_of_nc_powerseries_on_D_d} shows that $H^2(\fD_d)$ is indeed a Hilbert space of nc functions defined on $\fD_d$ with the inner product
\[
\left\langle\sum_{\alpha \in \bF_d^+} c_\alpha Z^\alpha , \sum_{\alpha \in \bF_d^+} d_\alpha Z^\alpha \right\rangle = \sum_{\alpha \in \bF_d^+} \frac{|\alpha| !}{\alpha !} \, \overline{d_\alpha} c_\alpha.
\]
The second part of Proposition \ref{prop_convergence_of_nc_powerseries_on_D_d} then shows that the point evaluations \[
f \mapsto \langle f(X)v, u \rangle, \, \forall f \in H^2(\fD_d)
\] are bounded for all $X \in \fD_d(n)$, and $v, u \in \bC^n$, $n \in \bN$. Therefore, $H^2(\fD_d)$ is a nc RKHS.
Since $\{ \left( \alpha!/|\alpha|! \right)^{1/2} Z^\alpha \}$ is an orthonormal basis of $H^2(\fD_d)$, we can use \cite[Theorem 3.5]{BMV16} to get that the nc reproducing kernel $K$ of $H^2(\fD_d)$ is given by
\begin{equation} \label{equation_kernel_of_H2(D_d)}
K(X,W)[T] = \sum_{\alpha \in \bF_d^+} \frac{\alpha!}{|\alpha|!} X^\alpha T W^{*\alpha}
\end{equation}
for every $X \in \fD_d(n), W \in \fD_d(m)$ and $T \in M_{n \times m}(\bC)$.

\subsection{Lack of multipliers in $H^2(\fD_d)$} \label{subsection_lack_of_multipliers_on_H2(D_d)}

So far we have used the combinatorial factor $|\alpha| !/\alpha !$ in the definition of the norm of $H^2(\fD_d)$ as a means to obtain convergence on $\fD_d$. We shall now show that $H^2(\fD_d)$ compresses to $H^2(\bD^d)$ in a natural way as a RKHS, however we do not recover the multipliers of $H^2(\bD^d)$.

First, we show that the combinatorial factor is invariant under permutations. 
For a given free word $\alpha = \alpha_1 \alpha_2 \dots \alpha_k \in \bF_d^+$ and some permutation $\sigma \in S_k$, we let $\sigma(\alpha) \in \bF_d^+$ denote 
\[
\sigma(\alpha) := \alpha_{\sigma(1)} \alpha_{\sigma(2)} \dots \alpha_{\sigma(k)}.
\] 
For each $\alpha \in \bF_d^+$, it is then clear that \[
\frac{|\alpha|!}{\alpha !} = \frac{|\sigma(\alpha)| !}{\sigma(\alpha)!}, \, \forall \sigma \in S_{|\alpha|}.
\] Secondly, exchange of any two variables is clearly a unitary transformation on $H^2(\fD_d)$, and $H^2(\fD_d)$ is also clearly invariant under rotations of each variable. Therefore, $H^2(\fD_d)$ preserves the natural symmetries of $\fD_d$.

The \emph{symmetrized nc Hardy space} $H^2_+(\fD_d)$ is defined to be the span of \[
\xi_{\alpha} := \frac{1}{|\alpha|!} \sum_{\sigma \in S_{|\alpha|}} Z^{\sigma(\alpha)}, \, \forall \alpha \in \bF_d^+.
\] Note that $\xi_{\alpha} = \xi_{\sigma(\alpha)}$ for each $\alpha \in \bF_d^+$ and $\sigma \in S_{|\alpha|}$. Since we can always find $\sigma \in S_{|\alpha|}$ that turns $\alpha$ into its multi-index form in $\bN^d$, it follows that \[
H^2_+(\fD_d) = \text{span}\{\xi_\alpha : \alpha \in \bN^d\}.
\] 
A simple combinatorial calculation then shows that \[
\langle \xi_\alpha , \xi_\beta \rangle = \delta_{\alpha,\beta}, \, \forall \alpha, \beta \in \bN^d.
\] The following is therefore a unitary map between $H^2(\bD^d)$ and $H^2_+(\fD_d)$: \[
H^2(\bD^d) \ni \sum_{\alpha \in \bN^d} c_\alpha z^\alpha \longleftrightarrow \sum_{\alpha \in \bN^d} c_\alpha \xi_\alpha \in H^2_+(\fD_d).
\]

Note that all separable Hilbert spaces are isomorphic, but the interesting feature of these symmetrized nc function spaces is the compression of the \emph{nc (left) shift operators} to the commutative shift operators with the help of the above unitary. See \cite[Section (41)]{Sha15} for an exploration of this idea in the case of the Drury-Arveson space $H^2_d$ on the unit row ball $\bB_d$. We now show that this idea does not work for $H^2(\fD_d)$ and $H^2(\bD^d)$.

Let $P_+ : H^2(\fD_d) \rightarrow H^2_+(\fD_d)$ be the projection map, and note that \[
P_+(Z^\alpha) = \xi_\alpha, \, \forall \alpha \in \bF_d^+.
\] It then follows from (\ref{equation_kernel_of_H2(D_d)}) that for each $w \in \fD_d(1) = \bD^d$, the kernel functions $K_{w,v,y}$ of $H^2(\fD_d)$ get compressed to the kernel functions $k_w$ of $H^2(\bD^d)$ given by \[
k_w(z) = \sum_{\alpha \in \bN^d} z^\alpha \overline{w}^\alpha, \, \forall z \in \bD^d.
\]

One might wish to compress the nc (left) shift operators $S_j$, defined formally by \[
S_jf(Z) := Z_j f(Z), \, \forall f \in H^2(\fD_d),
\] to the commutative shift operators on $ H^2(\bD^d) = H^2_+(\fD_d)$ under $P_+$ for each $1 \leq j \leq d$. However, as the following example shows, these nc shifts are not even well-defined on $H^2(\fD_d)$.

\begin{example}
Consider \[
f(Z) = \sum_{k = 0}^\infty \frac{1}{k+1} Z_1^k.
\] Clearly, $f \in H^2(\fD_d)$. The formal nc power-series for $Z_2 f(Z)$ is given by \[
Z_2 f(Z) = \sum_{k = 0}^\infty \frac{1}{k+1} Z_2 Z_1^k.
\] Note that $Z_2 f(Z) \not\in H^2(\fD_d)$ since \[
\|Z_2 f(Z)\|^2 = \sum_{k = 0}^\infty \frac{(k+1)!}{1! \, k!} \left\rvert \frac{1}{k+1} \right\rvert^2 = \sum_{k = 0}^\infty \frac{1}{k+1} = \infty.
\]
\end{example}

In contrast, $\mlt(\cH_d^2) = H^{\infty}(\fB_d)$ can be identified with the \emph{nc analytic Toeplitz algebra} $\fL_d$ generated by the nc shifts on $\fB_d$.

We now show that $\mlt(H^2(\fD_d))$ has no non-trivial elements using a similar argument.

\begin{theorem} \label{thm_multiplier_algebra_nc_hardy_space}
For $d \geq 2$, $\mlt(H^2(\fD_d)) = \{ c1 : c \in \bC \}$ where $1$ is the constant function.
\end{theorem}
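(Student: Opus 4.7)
My approach is a proof by contradiction that generalizes the test function argument in the example immediately preceding the theorem. Writing a multiplier as its Taylor series $f(Z) = \sum_{\alpha \in \bF_d^+} c_\alpha Z^\alpha$ at the origin, I suppose that $c_\beta \neq 0$ for some word $\beta$ with $|\beta| \geq 1$ and aim to produce an $h \in H^2(\fD_d)$ such that $fh \notin H^2(\fD_d)$. Since $d \geq 2$ and $|\beta| \geq 1$, I can always choose $j \in \{1, \dots, d\}$ with $\tilde\beta_j < |\beta|$ (take any letter occurring in $\beta$, if $\beta$ uses more than one letter, or any letter missing from $\beta$ otherwise); set $m := |\beta| - \tilde\beta_j \geq 1$ and fix any $l \neq j$.

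The test function I use is $h(Z) := \sum_{k \geq 0} (k+1)^{-a} Z_l Z_j^k$ for a parameter $a > 1$ to be chosen. Since the multi-index of $Z_l Z_j^k$ is $e_l + k e_j$ with combinatorial factor $(k+1)!/(k!\,1!) = k+1$, a direct computation gives $\|h\|^2 = \sum_{k \geq 0} (k+1)^{1-2a}$, which is finite exactly when $a > 1$. The key combinatorial point is that the choice $l \neq j$ makes the words $Z^\alpha Z_l Z_j^k$ pairwise distinct as $(\alpha, k)$ varies, because $k$ can be recovered as the length of the maximal $Z_j$-suffix (the preceding letter $Z_l$ differs from $Z_j$), after which $\alpha$ is determined by the remaining prefix. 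Consequently the formal product expansion of $fh$ is already in reduced Taylor form, so its squared norm is a sum of nonnegative terms. Isolating the $\alpha = \beta$ contributions and applying the elementary estimate $(|\beta|+1+k)!/(\tilde\beta_j + k)! = \prod_{i=1}^{m+1}(\tilde\beta_j + k + i) \geq (k+1)^{m+1}$ yields
\[
\|fh\|^2 \geq C_\beta \, |c_\beta|^2 \sum_{k \geq 0} (k+1)^{m+1-2a}
\]
for a positive constant $C_\beta$ depending only on $\beta, j, l$. This sum diverges whenever $a \leq (m+2)/2$, and since $m \geq 1$ the interval $(1, (m+2)/2]$ is nonempty; any $a$ in this range closes the contradiction.

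The main technical hurdle is guaranteeing the distinctness of the words $Z^\alpha Z_l Z_j^k$ so that no cross-cancellations can rescue $\|fh\|^2$; this is precisely why the test function mixes two different variables rather than using only powers of $Z_j$ as in the motivating example. The hypothesis $d \geq 2$ enters both in securing $j$ with $\tilde\beta_j < |\beta|$ and in being able to choose $l \neq j$, and the argument genuinely fails in the classical case $d=1$, where $H^2(\fD_1) = H^2(\bD)$ has the nontrivial multiplier algebra $H^\infty(\bD)$.
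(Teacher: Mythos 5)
Your proof is correct. The engine is the same as the paper's: pick a word $\beta$ with $c_\beta \neq 0$ and a letter $j$ with $\tilde{\beta}_j < |\beta|$ (possible precisely because $d \geq 2$), and exploit the divergence of $(k+|\beta|)!/(k+\tilde{\beta}_j)!$ as $k \to \infty$. The difference is in the packaging. The paper tests $M_\phi$ against the sequence of unit vectors $Z_j^k$ one at a time and concludes $\sup_k \|M_\phi Z_j^k\| = \infty$, which implicitly relies on a putative multiplier being a bounded operator; you instead assemble the test monomials into a single $h \in H^2(\fD_d)$ and exhibit $fh \notin H^2(\fD_d)$, contradicting the definition of a multiplier directly and with no appeal to boundedness. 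The price you pay is the need for the separator letter $Z_l$ with $l \neq j$ so that the words $Z^\alpha Z_l Z_j^k$ remain pairwise distinct as $k$ varies --- a point the paper never has to address since it fixes $k$ --- and your suffix-recovery argument for this, together with the weight estimate $\prod_{i=1}^{m+1}(\tilde{\beta}_j + k + i) \geq (k+1)^{m+1}$ and the nonempty parameter window $a \in \bigl(1, (m+2)/2\bigr]$, all check out.
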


\begin{proof}
It is clear that $\{c1 : c \in \bC \} \subseteq \mlt(H^2(\fD_d))$. For the reverse inclusion, suppose $\phi \in \mlt(H^2(\fD_d))$ is non-constant. As $1 \in H^2(\fD_d)$ we get that $\phi = M_{\phi} 1 \in H^2(\fD_d)$. Thus, $\phi(Z) = \sum_\alpha c_\alpha Z^\alpha$ for some constants $c_{\alpha}$. As $\phi$ is non-constant, there exists $0 \neq \alpha_0 \in \bF_d^+$ such that $c_{\alpha_0} \neq 0$. Suppose $\alpha_0 = (a_1, a_2, \dots, a_d) \in \bN^d$ in its multinomial form. Since $d \geq 2$, we can always pick $1 \leq j_0 \leq d$ such that $a_{j_0} < |\alpha_0|$. Note that for all $k \in \bN$, we then have
\[
\frac{\left\|M_{\phi}Z_{j_0}^k\right\|^2}{\left\|Z_{j_0}^k\right\|^2} = \left\|M_{\phi}Z_{j_0}^k\right\|^2 \geq \left \lvert c_{\alpha_0}\right \rvert^2 \left \| Z^{\alpha_0} Z_{j_0}^k \right \|^2 = \left \lvert c_{\alpha_0}\right\rvert^2 \frac{(k+|\alpha_0|)!}{a_1! \dots (k + a_{j_0})! \dots a_d!}.
\]
By the choice of $j_0$ we get that
\[
\left \|M_\phi\right \|^2 \geq \sup_{k \in \bN} \frac{\left \| M_\phi Z_{j_0}^k \right \|^2}{\left \| Z_{j_0}^k \right \|^2} \geq \frac{\left\lvert c_{\alpha_0} \right\rvert^2 a_{j_0}!}{a_1 ! \dots a_d !} \, \sup_{k \in \bN}\frac{\left (k + |\alpha_0|\right )!}{\left (k + a_{j_0}\right )!} = \infty,
\]
which is absurd. Therefore, $\mlt(H^2(\fD_d)) \subseteq \{c1 : c \in \bC\}$ and the result holds.
\end{proof}

This is interesting because, as is well-known, $\mlt(H^2(\bD^d)) = H^{\infty}(\bD^d)$ which is as large a (commutative) multiplier algebra as one can be, but $\mlt(H^2(\fD_d))$ is as small as it can be.
Lastly, we show that even $A(\fD_d)$ has a complicated operator algebraic structure.

\subsection{The C*-envelope of $A(\fD_d)$} \label{subsection_c*_envelope_of_nc_polydisk_algebra}

In this section we shall show that the C*-envelope of the operator algebra $A(\fD_d)$ is equal to the full C*-algebra of the free group. We start by giving a basic introduction to C*-envelopes. The reader is advised to refer to \cite{PaulsenBook} for more details on the following discussion.

Let $\cA$ be a unital operator algebra, and let $\cB = C^*(\cA)$ be a C*-algebra generated by $\cA$. An ideal $\cI \triangleleft \cB$ is said to be a \emph{boundary ideal} for $\cA$ in $\cB$ if the restriction of the quotient map $\pi : \cB \to \cB / \cI$ to $\cA$ is completely isometric. The \emph{Shilov ideal}, denote by $\cJ \triangleleft \cB$, is the unique largest boundary ideal for $\cA$ in $\cB$. The \emph{C*-envelope} of $\cA$ is defined to be the quotient \[
C^*_e(\cA) := \cB / \cJ.
\]
Even though $C^*_e(\cA)$ is defined using a C*-algebra generated by $\cA$, it is independent of the realization $\cA \subseteq \cB$. This follows from the following universal property of the C*-envelope: if $i : \cA \to \cB'$ is a completely isometric homomorphism such that $\cB' = C^*(i(\cA))$, then there exists a unique surjective $*$-homomorphism $\rho : \cB' \to C^*_e(\cA)$ satisfying \[
\pi(a) = \rho(i(a)), \, \forall a \in \cA.
\] In essence, $C^*_e(\cA)$ is the `smallest' C*-algebra that is generated by a completely isometrically isomorphic copy of $\cA$. Let us now turn to identifying $C^*_e(A(\fD_d))$. We set \[
\ol{\fD}_d^\infty := \{T \in \cL(\cH)^d : \|T_j\|\leq 1, \, \forall \, 1 \leq j \leq d \textrm{ and } \cH \textrm{ -- Hilbert space}\},
\] and
\[
\cU_d := \{ U \in \cL(\cH)^d : U_j \text{ is a unitary, } \forall \, 1 \leq j \leq d \text{ and } \cH \text{ -- Hilbert space} \}.
\]

\begin{lemma}\label{lem:norms}
For every matrix valued free nc polynomial $p \in \bC\langle Z \rangle \otimes M_n$, \[
\sup_{T \in \fD_d} \|p(T)\| = \sup_{T \in \ol{\fD}_d} \|p(T)\| = \sup_{T \in \ol{\fD}_d^\infty} \|p(T)\| = \sup_{U \in \cU_d} \|p(U)\| .
\]
\end{lemma}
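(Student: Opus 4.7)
The plan is to chain four suprema together by a combination of elementary approximation and a classical dilation. Write $(\mathrm{i}),\,(\mathrm{ii}),\,(\mathrm{iii}),\,(\mathrm{iv})$ for the four quantities in the order stated. The inclusions $\fD_d \subseteq \ol{\fD}_d \subseteq \ol{\fD}_d^\infty$ and $\cU_d \subseteq \ol{\fD}_d^\infty$ give trivially $(\mathrm{i}) \leq (\mathrm{ii}) \leq (\mathrm{iii})$ and $(\mathrm{iv}) \leq (\mathrm{iii})$. So the task reduces to three non-trivial inequalities: $(\mathrm{ii}) \leq (\mathrm{i})$, $(\mathrm{iii}) \leq (\mathrm{ii})$, and $(\mathrm{iii}) \leq (\mathrm{iv})$.

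For $(\mathrm{ii}) \leq (\mathrm{i})$, I use scaling. For any $T \in \ol{\fD}_d(m)$ and $0<r<1$, the tuple $rT$ lies in $\fD_d(m)$, and since polynomial evaluation is norm-continuous, $p(rT) \to p(T)$ in norm as $r \nearrow 1$. Thus $\|p(T)\| \leq \sup_{T' \in \fD_d}\|p(T')\|$.

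For $(\mathrm{iii}) \leq (\mathrm{ii})$, I use finite-dimensional compressions. Fix $T \in \ol{\fD}_d^\infty$ on a Hilbert space $\cH$ and let $(P_\lambda)$ be an increasing net of finite-rank projections with $P_\lambda \to I_\cH$ strongly. Since multiplication is jointly SOT-continuous on bounded sets, $P_\lambda T_j P_\lambda \to T_j$ strongly, and hence $p(P_\lambda T P_\lambda) \to p(T)$ strongly (after tensoring with $I_{M_n}$ for matrix-valued $p$). Writing $\cH = \cK_\lambda \oplus \cK_\lambda^\perp$ with $\cK_\lambda = \mathrm{ran}(P_\lambda)$, one has $P_\lambda T_j P_\lambda = T_j^{(\lambda)} \oplus 0$ where $T^{(\lambda)} = (P_\lambda T_j|_{\cK_\lambda})_j \in \ol{\fD}_d(\dim \cK_\lambda)$. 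Expanding the polynomial, the $|\alpha|\geq 1$ summands are supported on $\cK_\lambda$ while only the constant term contributes off-diagonally, yielding
\[
p(P_\lambda T P_\lambda) = p(T^{(\lambda)}) \oplus \bigl(p(0)\otimes I_{\cK_\lambda^\perp}\bigr),
\]
so $\|p(P_\lambda T P_\lambda)\| = \max(\|p(T^{(\lambda)})\|,\|p(0)\|)$, which is at most $(\mathrm{ii})$ since $T^{(\lambda)},\,0 \in \ol{\fD}_d$. Taking a liminf gives $\|p(T)\| \leq (\mathrm{ii})$.

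For $(\mathrm{iii}) \leq (\mathrm{iv})$, I invoke the classical \emph{simultaneous unitary power dilation}: for any contractions $T_1,\dots,T_d$ on $\cH$, there is a Hilbert space $\cK \supseteq \cH$ and unitaries $U_1,\dots,U_d \in \cL(\cK)$ with $T^\alpha = P_\cH U^\alpha|_\cH$ for every word $\alpha \in \bF_d^+$ (proved by iterating Sz.-Nagy's dilation; see, e.g., Sz.-Nagy--Foias or \cite{PaulsenBook}). Tensoring with $I_{M_n}$ and summing gives $(I_{M_n}\otimes P_\cH)\,p(U)\,(I_{M_n}\otimes P_\cH) = p(T)$, hence $\|p(T)\| \leq \|p(U)\| \leq (\mathrm{iv})$. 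This is the main substantive step; everything else is elementary approximation. The only mildly delicate point elsewhere is to make sure the constant term doesn't spoil the compression argument in Step~2, which the direct-sum computation above handles.
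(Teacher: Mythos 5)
Your proof is correct and follows essentially the same route as the paper: scaling for the first equality, finite-dimensional compressions converging strongly for the second, and the simultaneous (Schäffer-type) unitary dilation of a tuple of contractions for the third. Your explicit direct-sum treatment of the constant term in the compression step is a small refinement of a point the paper passes over quickly, but the argument is otherwise identical.
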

\begin{proof}
The first equality is clear. 
In the second equality, an inequality `$\leq$' evidently holds. 
To see that it is an equality, fix $p \in \bC\langle Z \rangle \otimes M_n$ and $T \in \cL(\cH)^d$ with $\|T_j\| \leq 1$ for all $1 \leq j \leq d$. 
Then for every finite dimensional $\cG \subseteq \cH$, we can identify the compressed tuple $T_\cG:=P_\cG T \vert_\cG$ as an element in $\ol{\fD}_d$. 
The family $\{T_\cG\}$ forms a bounded net that converges strongly to $T$, and it follows that for every $h \in \cH \otimes \bC^n$, \[
\left \|p(T)h \right \| \leq \sup_{\cG} \left \|p(T_\cG) P_\cG h \right \| \leq \sup_{\cG} \left  \|p(T_\cG)\right \| \left \|h\right \| 
\] as required. As for the third and last equality, certainly there is an inequality `$\geq$' since unitaries are in particular contractions. But for every $T \in \ol{\fD}_d^\infty$ there is a unitary dilation i.e., a tuple of unitaries $U \in \cL(\cK)^d$ where $\cK \supseteq \cH$ such that for every $p\in \bC\langle Z \rangle$, we have
\[
p(T) = p_\cH p(U)\vert_\cH. \] Indeed we simply construct $U_j$ as a unitary dilation of $T_j$ on $\cK$ independently for each $j$. 
Then $\|p(T)\| \leq \|p(U)\|$, and this clearly persists for matrix valued polynomials as well. 
\end{proof}

Let $C^*(\bF_d)$ be the full group C*-algebra of $\bF_d$. 
Let us write $u = (u_1, \ldots, u_d)$ for the tuple of canonical generators of $C^*(\bF_d)$. 
By the universal property of $C^*(\bF_d)$, every unitary tuple $U = (U_1, \ldots, U_d)$ is the image of $u$ under a $*$-representation, and thus \[
\|p(u)\| = \sup_{U \in \cU_d} \|p(U)\|, \, \forall p \in \bC \langle Z \rangle \otimes M_n.
\] Therefore, we have the following consequence. 

\begin{theorem} \label{thm:C*-envelope_A(D_d)}
There is a completely isometric isomorphism $\varphi : A(\fD_d)  \to \ol{\alg}(u_1, \ldots, u_d)$ determined by $\varphi(Z_j) = u_j$, for all $j=1, \ldots, d$. 
Consequently, $C^*_e(A(\fD_d)) = C^*(\bF_d)$. 
\end{theorem}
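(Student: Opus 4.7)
My plan is to combine Lemma~\ref{lem:norms} with the universal property of $C^*(\bF_d)$ to construct $\varphi$, and then to identify $C^*(\bF_d)$ as the C*-envelope via a dilation-theoretic maximality argument for the inclusion $\ol{\alg}(u_1, \ldots, u_d) \hookrightarrow C^*(\bF_d)$.

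For the first assertion, Lemma~\ref{lem:norms} yields $\|p\|_{M_n(A(\fD_d))} = \sup_{U \in \cU_d}\|p(U)\|$ for every matrix-valued free nc polynomial $p \in M_n(\bC\langle Z\rangle)$. By the universal property of $C^*(\bF_d)$, each $d$-tuple of unitaries $U$ is the image of the canonical generators under some $*$-representation, so the right-hand side coincides with $\|p(u)\|_{M_n(C^*(\bF_d))}$. Therefore the assignment $Z_j \mapsto u_j$ extends from $\bC\langle Z\rangle$ to a completely isometric unital homomorphism onto $\alg(u_1, \ldots, u_d)$, and passing to norm closures produces the completely isometric isomorphism $\varphi : A(\fD_d) \to \ol{\alg}(u_1, \ldots, u_d)$.

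For the C*-envelope, note that $u_j^* = u_j^{-1}$ already lies in the C*-algebra generated by $\ol{\alg}(u_1, \ldots, u_d)$, so $C^*(\ol{\alg}(u_1, \ldots, u_d)) = C^*(\bF_d)$. To show this realizes $C^*_e(A(\fD_d))$, I would argue that the inclusion $\iota : \ol{\alg}(u_1, \ldots, u_d) \hookrightarrow C^*(\bF_d) \subseteq \cL(\cH)$ (for any faithful Hilbert space realization on $\cH$) is a maximal representation in the sense of Dritschel-McCullough. Let $\pi'$ on $\cK \supseteq \cH$ be any completely contractive dilation of $\iota$, and set $\tilde u_j := \pi'(u_j)$, a contraction with $u_j = P_\cH \tilde u_j|_\cH$. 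Since $u_j$ is an isometry, the chain
\[
\|h\|^2 = \|u_j h\|^2 \leq \|\tilde u_j h\|^2 \leq \|h\|^2, \quad h \in \cH,
\]
must be an equality, which combined with $\tilde u_j h = u_j h + (I-P_\cH)\tilde u_j h$ forces $\tilde u_j h \in \cH$. Taking adjoints in the compression relation yields $u_j^* = P_\cH \tilde u_j^*|_\cH$, and the same argument applied to the isometry $u_j^*$ shows $\tilde u_j^* h \in \cH$ as well. Hence $\cH$ reduces each $\tilde u_j$, so $\tilde u_j = u_j \oplus v_j$ on $\cK = \cH \oplus (\cK \ominus \cH)$, and the dilation is a trivial direct sum. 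Maximality of $\iota$ together with $C^*(\iota(\ol{\alg})) = C^*(\bF_d)$ then gives $C^*_e(A(\fD_d)) = C^*(\bF_d)$.

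The main subtlety is invoking the correct characterization of $C^*_e$; one could alternatively deduce the conclusion directly via the unique extension property, noting that each unitary $u_j$ lies in the multiplicative domain of any UCP extension of $\iota$ to $C^*(\bF_d)$, forcing such an extension to agree with the identity on the $*$-subalgebra generated by the $u_j$ and hence (by continuity) on all of $C^*(\bF_d)$. The decisive feature of the polydisk domain, which fails for $\fB_d$, is that the generators $Z_j$ are mapped to unitaries, trivializing all dilations.
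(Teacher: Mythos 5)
Your proof of the first assertion is identical to the paper's: both combine Lemma~\ref{lem:norms} with the universal property of $C^*(\bF_d)$ to see that $Z_j \mapsto u_j$ is completely isometric on matrix-valued polynomials and hence extends to the closures. For the identification of the C*-envelope, however, you take a genuinely different (and self-contained) route. The paper simply cites Arveson's classical result that an operator algebra generated by unitaries has trivial Shilov ideal in the C*-algebra it generates (\cite[Corollary 2.2.8]{Arv69}), with hyperrigidity of unitaries offered as an alternative. You instead prove the relevant fact from scratch in the Dritschel--McCullough framework: you show that the identity representation of $\ol{\alg}(u_1,\ldots,u_d)$ on any faithful realization of $C^*(\bF_d)$ is maximal, by the standard rigidity argument that a contractive dilation of a unitary must leave the original space invariant, and likewise for its adjoint, so that $\cH$ reduces each $\tilde u_j$ and the dilation is a trivial direct sum. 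This argument is correct (note that reducibility for the generators propagates to the whole algebra since $\pi'$ is a homomorphism, and then to the closure by continuity), and maximality of a completely isometric representation indeed yields $C^*_e(A(\fD_d)) = C^*(\iota(\ol{\alg}(u_1,\ldots,u_d))) = C^*(\bF_d)$. Your closing remark about the multiplicative domain forcing the unique extension property is essentially the paper's hyperrigidity alternative. What your approach buys is independence from the cited literature at the cost of invoking the Dritschel--McCullough characterization of the C*-envelope via maximal dilations; what the paper's approach buys is brevity. Both are standard and correct.
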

\begin{proof}
By Lemma \ref{lem:norms} and the above observation, $\varphi$ is a completely isometric isomorphism. 
Therefore, $C^*_{e}(A(\fD_d)) = C^*_{e}(\ol{\alg}(u_1, \ldots, u_d))$, since the C*-envelope is an invariant of the operator algebra structure.
We know that if an operator algebra $\cA \subseteq \cB = C^*(\cA)$ is generated by unitaries, then the Shilov boundary of $\cA$ in $\cB$ is trivial (see \cite[Corollary 2.2.8]{Arv69}). Thus, 
\[
C^*_{e}(A(\fD_d)) = C^*_{e}(\ol{\alg}(u_1, \ldots, u_d)) = C^*(\bF_d). 
\] 
Alternatively, one can use the fact that unitaries are hyperrigid, meaning that a unital complete isometry between $\operatorname{span}\{1, u_1, \ldots, u_d\} \subset C^*(\bF_d)$ and its image in $ C^*_{e}(\ol{\alg}(u_1, \ldots, u_d))$ must extend to a $*$-isomorphism (see \cite[Theorem 3.3]{Arv11} or \cite[Corollary 4.10]{Kiri}). 
\end{proof}

Theorem \ref{thm:C*-envelope_A(D_d)} suggests that searching for a natural and tractable representation of $A(\fD_d)$ on a Hilbert space of nc functions might also be a challenging task, since in any such representation the C*-algebra generated by $A(\fD_d)$ will have $C^*(\bF_d)$ as a quotient. 
However, $C^*(\bF_d)$ is well known to be a complicated universal object. 
This should be compared with the C*-envelope of $A(\fB_d)$, which is the Cuntz algebra $\cO_d$ --- a simple and nuclear C*-algebra!


\section{\bf Bounded NC Functions on NC Operator Balls} \label{section_nc_operator_balls}

The observations in Section \ref{section_nc_function_theory_in_polydisk} motivate us to employ more general nc function theory techniques that will work for the nc unit polydisk and more general nc domains, namely \emph{nc operator balls}. 
We showcase the important properties of these domains in this section. 
We shall follow closely the notations in \cite{BMV18}.

\subsection{Full NC sets and interpolation} \label{subsection_full_nc_sets}

A nc subset $\Omega \subset \bM^d$ is said to be \emph{full} if it is closed under left injective intertwiners i.e., 
\[
X \in \Omega, \, \widetilde{X} \in \bM^d \text{ s.t. } I\widetilde{X} = X I \text{ for some } I \text{ -- injective} \Rightarrow \widetilde{X} \in \Omega.
\] 
In particular, $\Omega = \bM^d$ is a full nc set. 
Let $\cL(\cR,\cS)_{nc}$ be as in (\ref{equation_ambient_operator_space}) for some Hilbert spaces $\cR$, $\cS$. 
A nc operator ball $\bD_Q$, as defined in \eqref{eq:operator_ball}, is a natural domain for interpolation on nc subsets $\fX$ that are \emph{relatively full} in $\bD_Q$ i.e., \[
X \in \fX, \, \widetilde{X} \in \bD_Q \text{ s.t. } I\widetilde{X} = XI \text{ for some } I \text{ -- injective} \Rightarrow \widetilde{X} \in \fX.
\]

The most important tool available in this regard is \cite[Corollary 3.4]{BMV18}, which we state in terms of our notation for the reader's convenience.

\begin{theorem} \label{theorem_BMV_interpolation_full_sets}
	
Let $Q$ and $\bD_Q$ be as in (\ref{eq:operator_ball}). Suppose $\cU, \cV$ are some other coefficient Hilbert spaces, and $\mathfrak{X} \subset \bD_Q$ is a relatively full nc subset of $\bD_Q$. Then, a nc map $S_0 : \mathfrak{X} \rightarrow \cL(\cU,\cV)_\text{nc}$ can be extended to a nc map $S : \bD_Q \rightarrow \cL(\cU,\cV)_{\text{nc}}$ without changing the sup-norm i.e., \[
S\rvert_{\fX} = S_0, \text{ and } \sup_{X \in \mathfrak{X}} \| S_0(X) \| = \sup_{X \in \bD_Q} \| S(X) \|.
\]
	
\end{theorem}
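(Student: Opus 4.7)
The plan is to cite directly Ball--Marx--Vinnikov \cite[Corollary 3.4]{BMV18}, after a brief translation between their notation and ours; since the statement is, as announced, only a restatement, the work reduces to verifying that our hypotheses match theirs. Nevertheless, let me sketch the structure of the argument behind that result so the reader knows what drives it.

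After normalizing so that $M := \sup_{X \in \fX} \|S_0(X)\| \le 1$, the key step is to establish a transfer-function realization of $S_0$ on $\fX$ with respect to the linear operator polynomial $Q$. The first thing to do is produce an Agler--Schur decomposition: construct a completely positive nc kernel $\Gamma$ on $\fX$, taking values in $\cL(\cU,\cV)_{nc}$-valued maps on $\cR,\cS$, satisfying a factorization of the form
\[
I_\cV - S_0(X) S_0(W)^{*} \;=\; \Gamma(X,W)\!\left[I_\cS - Q(X) Q(W)^{*}\right]
\]
on $\fX \times \fX$. The existence of such a $\Gamma$ is where the \emph{relatively full} hypothesis enters in an essential way: fullness guarantees that the associated positive nc kernel, defined first on a smaller set of ``test points'', extends coherently to $\fX$ without obstructions from intertwiners, so that a Kolmogorov-type decomposition yields an auxiliary Hilbert space $\cH$ and a contractive colligation
\[
\begin{pmatrix} A & B \\ C & D \end{pmatrix}\colon (\cH \otimes \cS) \oplus \cU \longrightarrow (\cH \otimes \cR) \oplus \cV
\]
for which
\[
S_0(X) \;=\; D + C\bigl(I - (I_\cH \otimes Q(X))\,A\bigr)^{-1}(I_\cH \otimes Q(X))\,B \qquad (X \in \fX).
\]

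With the realization in hand, the extension is automatic. Define $S$ on all of $\bD_Q$ by exactly the same formula; since $\|Q(X)\| < 1$ for $X \in \bD_Q$ and $\|A\| \le 1$, the Neumann series for $(I - (I_\cH \otimes Q(X)) A)^{-1}$ converges, so $S$ is a well-defined nc function on $\bD_Q$. The standard Redheffer/transfer function estimate applied to the contractive colligation shows $\|S(X)\| \le 1$ for all $X \in \bD_Q$, which together with the trivial reverse inequality $\sup_{\bD_Q}\|S\| \ge \sup_{\fX}\|S_0\|$ gives equality of sup-norms. Finally, $S|_{\fX} = S_0$ by construction. Un-normalizing recovers the general statement.

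The main obstacle, as expected in this circle of ideas, is the construction of the Agler decomposition/colligation on the relatively full set $\fX$; this is the technical heart of \cite{BMV18}, and it is exactly where the ``relatively full'' hypothesis does its work. Since the remaining verification is routine, the proof here simply consists of invoking \cite[Corollary 3.4]{BMV18} once the dictionary between our $\bD_Q$ and their nc domain defined by a linear operator pencil has been fixed.
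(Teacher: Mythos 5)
Your proposal matches the paper exactly: the paper offers no proof of this theorem, stating only that it is \cite[Corollary 3.4]{BMV18} restated in the paper's notation, which is precisely the citation you invoke. Your supplementary sketch of the Agler decomposition and transfer-function realization behind that corollary is a faithful outline of the Ball--Marx--Vinnikov machinery, so the proposal is correct and takes essentially the same approach.
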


We also have a version of the maximum modulus principle for nc operator balls, which we shall refer to as the \emph{boundary value principle} for $\bD_Q$ henceforth. Let $\overline{\bD_Q} = \sqcup_{n = 1}^\infty \overline{\bD_Q(n)}$ be the level-wise closure of $\bD_Q$, and note that \[
\overline{\bD_Q} = \{ X \in \bM^d \, : \, \| Q(X) \| \leq 1 \}.
\] We denote the topological boundary of $\bD_Q$ by $\partial \bD_Q := \overline{\bD_Q} \setminus \bD_Q$. Clearly, \[
\partial \bD_Q = \{ X \in \bM^d \, : \, \| Q(X) \| = 1 \}.
\]

\begin{theorem}[Boundary value principle] \label{theorem_bdy_value_principle}

Let $\Omega \subseteq \bM^l, \, l \in \bN$ be a nc domain, and let $Q$ and $\bD_Q$ be as in (\ref{eq:operator_ball}). 
If $F : \Omega \rightarrow \overline{\bD_Q}$ is a nc map such that $F(X_0) \in \partial \bD_Q$ for some $X_0 \in \Omega$, then $F(\Omega) \subseteq \partial \bD_Q$.

In particular, if $F : \Omega \rightarrow \overline{\bD_Q}$ is a nc map such that $F(X) \in \bD_Q$ for some $X \in \Omega$, then $F(\Omega) \subseteq \bD_Q$.

\end{theorem}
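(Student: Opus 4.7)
The plan is to reduce the theorem to an assertion about the auxiliary nc map $G := Q \circ F \colon \Omega \to \cL(\cR,\cS)_{\mathrm{nc}}$, apply the classical (scalar) maximum modulus principle to matrix coefficients of $G$, and use direct sums together with similarity invariance to propagate the resulting rigidity between the levels of $\Omega$. Since $F(X) \in \overline{\bD_Q}$ is equivalent to $\|G(X)\| \le 1$ and $F(X) \in \partial\bD_Q$ to $\|G(X)\| = 1$, the first conclusion becomes equivalent to $\|G(X)\| = 1$ for every $X \in \Omega$, and the ``in particular'' statement then follows by contraposition. To begin, I will pick unit vectors $u_0 \in \cR^{n_0}$ and $v_0 \in \cS^{n_0}$ with $G(X_0) u_0 = v_0$, which exist because $\|G(X_0)\| = 1$ is attained.

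Next, given an arbitrary $Y \in \Omega(m)$, I will form the direct sum $W_0 := X_0 \oplus Y \in \Omega(n_0+m)$ together with $\tilde u_0 := u_0 \oplus 0$ and $\tilde v_0 := v_0 \oplus 0$. Because $G$ respects direct sums, $G(W_0) = G(X_0) \oplus G(Y)$ has operator norm one, attained at $(\tilde u_0, \tilde v_0)$. The scalar function $h(X) := \langle G(X) \tilde u_0, \tilde v_0\rangle$ on $\Omega(n_0+m)$ is holomorphic with $|h| \le \|G\| \le 1$ and $h(W_0) = 1$, so the classical maximum modulus principle forces $h \equiv 1$ on the connected component $\cC$ of $W_0$ in $\Omega(n_0+m)$; the equality case of Cauchy--Schwarz then upgrades this to $G(X) \tilde u_0 = \tilde v_0$, and in particular $\|G(X)\| = 1$, throughout $\cC$.

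To descend from $\cC$ at level $n_0+m$ back to the level-$m$ conclusion $\|G(Y)\| = 1$, I will conjugate $W_0$ by the family of unit upper-triangular similarities $S_T = \left(\begin{smallmatrix} I_{n_0} & T \\ 0 & I_m \end{smallmatrix}\right)$ with $T \in M_{n_0, m}(\bC)$ small. Similarity invariance of the nc function $F$ combined with the linearity of $Q$ will give the block form
\[
G(S_T^{-1} W_0 S_T) = \begin{pmatrix} G(X_0) & G(X_0)(I_\cR \otimes T) - (I_\cS \otimes T) G(Y) \\ 0 & G(Y) \end{pmatrix},
\]
and the inclusion $S_T^{-1} W_0 S_T \in \cC$ will yield $\|G(S_T^{-1} W_0 S_T)\| = 1$. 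A standard extremality analysis for block upper-triangular operators of norm one with top-left block extremal at $(u_0, v_0)$ should then force the range identity $(I_\cR \otimes T^*) u_0 = G(Y)^* (I_\cS \otimes T^*) v_0$, valid for every admissible $T$.

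The hard part will be extracting $\|G(Y)\| = 1$ from this range identity. Naive operator-norm bounds are insufficient: on purely operator-theoretic grounds, a block upper-triangular operator of norm one with top-left block of norm one is perfectly consistent with the bottom-right block having norm strictly less than one. Overcoming this will require exploiting the full rigidity of the range identity --- which must hold for \emph{every} admissible $T$ --- together with the special structure of the extremal pair $(u_0, v_0)$ inherited from the operator space $\cE$ through the linear isomorphism identifying $M_n^d$ with $M_n(\cE)$. The expected mechanism, clearly visible in the nc polydisk case where $\|F_j(X_0)\| = 1$ forces the coordinate $F_j$ to be a modulus-one scalar constant, is that the identity forces $u_0$ and $v_0$ to be aligned via a modulus-one scalar along some ``$\cE$-direction'', making a corresponding component of $F$ constant of modulus one and yielding $\|G(Y)\| = 1$ automatically for every $Y \in \Omega$.
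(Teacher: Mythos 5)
Your overall strategy --- coupling $X_0$ to an arbitrary $Y$ through the direct sum $W_0=X_0\oplus Y$ and its upper-triangular similarity perturbations $S_T^{-1}W_0S_T$, then running an extremality analysis on the resulting block operator --- is genuinely different from the paper's proof. The paper instead chooses, by Hahn--Banach, a norm-one functional $\Lambda$ with $|\Lambda(Q(F(X_0)))|=1$, applies the scalar maximum modulus principle to $\Lambda\circ Q\circ F$ on $\Omega(n)$, and then transfers between levels $n$ and $m$ through the common multiple $mn$ using only that $Q\circ F$ respects direct sums. Your route, if completed, would even avoid the implicit levelwise-connectivity use in the paper's argument, since openness of $\Omega(n_0+m)$ at $W_0$ is all the similarity trick needs. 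However, as written there are two genuine gaps.

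First, the opening step assumes that $\|G(X_0)\|=1$ is attained: you take unit vectors with $G(X_0)u_0=v_0$. This is false in general, because $G(X_0)$ is an operator between the possibly infinite-dimensional spaces $\cR^{n_0}$ and $\cS^{n_0}$, and a norm-one operator need not attain its norm (finite-dimensionality of $\cE$ does not help; consider the span of $\diag(1/2,2/3,3/4,\dots)$). This is exactly where the paper's Hahn--Banach functional is the right tool, since a functional always attains its norm on a fixed element. All of your exact equality arguments (the Cauchy--Schwarz equality case, and the step $M^*M(u_0\oplus 0)=u_0\oplus 0$ behind $B^*v_0=0$) depend on exact attainment; they can be salvaged with $\epsilon$-almost-maximizing vectors and a limit at the end, but you do not address this, and the maximum-modulus step in particular cannot be run with an approximate maximizer (luckily it is superfluous: $\|G(S_T^{-1}W_0S_T)\|\le 1$ already follows from $F(\Omega)\subseteq\overline{\bD_Q}$, and $G(S_T^{-1}W_0S_T)\tilde u_0=\tilde v_0$ is automatic from the block form). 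Second, and more seriously, you do not finish the proof: you stop at the range identity $(I_\cR\otimes T^*)u_0=G(Y)^*(I_\cS\otimes T^*)v_0$ and describe the extraction of $\|G(Y)\|=1$ only as an ``expected mechanism'' in which some component of $F$ becomes a unimodular constant --- which is not what happens and is not needed. The correct (and in fact short) conclusion is: take $T=e_ie_j^*$ and write $u_0=\sum_i u_0^{(i)}\otimes e_i$, $v_0=\sum_i v_0^{(i)}\otimes e_i$; the identity gives $u_0^{(i)}\otimes e_j=G(Y)^*\bigl(v_0^{(i)}\otimes e_j\bigr)$, hence $\|u_0^{(i)}\|\le\|G(Y)\|\,\|v_0^{(i)}\|$ for each $i$, and summing squares yields $1\le\|G(Y)\|^2$. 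Since you explicitly leave this step open, the proposal as it stands is an incomplete proof.
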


\begin{proof}
Let $F : \Omega \rightarrow \overline{\bD_Q}$ be a nc map such that $F(X_0) \in \partial \bD_Q$ for some $X_0 \in \Omega(n)$, $n \in \bN$. In other words, $\| Q(F(X_0)) \| = 1$. We can therefore find a bounded linear functional $\Lambda \in (\cL(\cR,\cS))^*$ such that $\|\Lambda\| = 1 \text{ and } \| \Lambda(Q(F(X_0))) \| = 1$. We then define a holomorphic map $G : \Omega (n) \rightarrow \overline{\bD}$ by
\[
G(X) = \Lambda(Q(F(X))), \, \forall \, X \in \Omega(n).
\] 
It follows from the maximum modulus principle that $|G(X)| = 1$ for all $X \in \Omega(n)$. Thus,
\[
1 \geq \| Q(F(X)) \| \geq \| \Lambda(Q(F(X))) \| = |G(X)| = 1, \, \forall X \in \Omega(n),
\]
and we get $F(\Omega(n)) \subseteq \partial \bD_Q$.

We now move to the other levels by using the following trick. 
For any $X \in \Omega(n)$, define
\[
X^{(m)} := \oplus_{k=1}^m X  \in \Omega(mn), \, \forall m \in \bN.
\] 
As $Q\circ F$ is a nc map, we get \[
\|Q(F(X^{(m)}))\| = \|Q(F(X))\|= 1.
\] 
Repeating the previous argument shows that $\|Q(F(Y))\| = 1$ for any $Y \in \Omega(mn)$. 
Let $W \in \Omega(m)$, and let $W^{(n)}$ be the direct sum of $W$ with itself $n$ times. 
Then
\[
\|Q(F(W))\| = \|Q(F(W^{(n)}))\| = 1
\]
and so, $F(W) \in \partial \bD_Q$ as required. The second part of the theorem then follows easily.
\end{proof}

\begin{remark}
It is worth noting that the {\em levelwise} boundary value principle follows from the fact that each level is convex and hence, in particular, it is what is known in the several complex variables literature as a {\em taut} domain. 
Thus, we could have referred to \cite[Proposition 2.1.6]{Abate} but we chose to keep the discussion self-contained.
\end{remark}

\begin{example} \label{example_image_could_lie_in_boundary}
In the case $\bD_Q = \fD_d$, there are non-constant nc maps that map into $\partial \fD_d$. Take, for instance, $F : \fD_2 \rightarrow \partial \fD_2$ defined as 
\[
F(X_1,X_2) = (X_1, I_n), \, \forall (X_1,X_2) \in \fD_2 (n), \, n \in \bN,
\] 
where $I_n$ is the $n \times n$ identity matrix in $M_n$. Hence, we cannot conclude that the map $F$ as in Theorem \ref{theorem_bdy_value_principle} is necessarily constant for a general $\bD_Q$, and thus we may not have a proper maximum modulus principle for such a $\bD_Q$. For $\bD_Q = \fB_d$ however, \cite[Lemma 6.11]{SSS18} shows that such an $F$ as in Theorem \ref{theorem_bdy_value_principle} is always constant.
\end{example}

\subsection{Bounded NC maps on $\bD_Q$} \label{subsection_bounded_nc_maps_on_D_Q}

As in Section \ref{subsection_definitions_notation}, for any nc subset $\mathfrak{X} \subseteq \bD_Q$ we define
\[
H^\infty(\mathfrak{X}) = \left\{f : \fX \rightarrow \bM^1 : f \text{ is nc and } \sup_{X \in \mathfrak{X}} \| f(X) \| < \infty \right \}.
\]
In particular, $H^\infty(\bD_Q)$ is the algebra of all bounded nc functions on $\bD_Q$. Similarly, define
\[
A(\mathfrak{X}) = \overline{\left\{p \rvert_{\mathfrak{X}} : p \in \bC \langle Z \rangle \right\}}^{\|\cdot\|_{\infty}\rvert_{\mathfrak{X}}}.
\]
In particular, $A(\bD_Q)$ is the norm-closed subalgebra of $H^{\infty}(\bD_Q)$ generated by free nc polynomials. We define the \emph{bounded-pointwise topology} on $H^{\infty}(\mathfrak{X})$ to be the weakest topology in which a bounded net $\{F^{(\kappa)}\}_{\kappa \in I} \subset H^{\infty}(\mathfrak{X})$ converges to $F \in H^{\infty}(\mathfrak{X})$ if and only if 
\[
F^{(\kappa)}(X) \xrightarrow{\|\cdot\|_\infty} F(X), \, \forall X \in \mathfrak{X}.
\]

We now show that when $\fX$ is relatively full in $\bD_Q$, functions in $H^{\infty}(\fX)$ have a global power-series representation. Since $Q$ is a linear operator-valued nc map, it follows that $\bD_Q$ is \emph{homogeneous} i.e., $\lambda \bD_Q \subseteq \bD_Q \foral \lambda \in \overline{\bD}$:
\[
X \in \bD_Q, \, \lambda \in \overline{\bD} \implies \|Q(\lambda X)\| = \|\lambda Q(X)\| = |\lambda| \left \|Q(X)\right \| < 1.
\]
Secondly, $\bD_Q$ is open in the \emph{uniformly-open nc topology} (as defined in \cite[Section 7.2]{KVV14}): 

Let $m, n \in \bN$, $X \in \bD_Q(n)$ be arbitrary. If $Y \in \Omega(mn)$ is such that
\[
\left \| Y - \oplus_{k = 1}^m X \right \|_\infty < r,
\]
then since $Q$ is a linear nc map we get \begin{equation} \label{equation_linearity_of_Q}
\|Q(Y)\| \leq \|Q(X)\| + \left \| Q\left( Y - \oplus_{k = 1}^m X \right) \right \|.
\end{equation} Since $\| Q(X) \| < 1 - \delta$ for some $\delta > 0$, we choose $r = \delta/(d \cdot \max_{1 \leq j \leq d} \|Q_j\|)$ so that
\begin{equation} \label{eqn:uniform_openness_of_D_Q}
\| Q(W) \| \leq \left(d \cdot \max_{1 \leq j \leq d}\left \|Q_j \right \| \right) \left \| W \right \|_\infty < \delta, \, \forall \, W \in \bM^d, \, \| W \|_\infty < r.
\end{equation}
As the choice of $r > 0$ is independent of $m \in \bN$, we can plug in $W = Y - \oplus_{k = 1}^m X$ in (\ref{eqn:uniform_openness_of_D_Q}) and use (\ref{equation_linearity_of_Q}) to get that \[
B_{nc}(X,r) := \sqcup_{m = 1}^\infty B\left( \oplus_{k = 1}^m X , r \right) \subset \bD_Q.
\] Since $X \in \bD_Q$ was arbitrarily chosen, this shows that $\bD_Q$ is uniformly-open. \cite[Theorem 7.21]{KVV14} then shows that every $F \in H^{\infty}(\bD_Q)$ has a global power-series representation about $0$:
\[
F(X) = \sum_{\alpha \in \bF_d^+} c_{\alpha} X^{\alpha}, \, \forall X \in \bD_Q.
\]
Furthermore, the power-series converges absolutely and uniformly on $r \bD_Q $ for all $ r < 1$. When $\fX$ is relatively full in $\bD_Q$, by Theorem \ref{theorem_BMV_interpolation_full_sets}, every $f \in H^{\infty}(\fX)$ extends to some $F \in H^{\infty}(\bD_Q)$. Therefore, $f$ inherits a global power-series representation via $F$. Note that such a power-series representation is not necessarily unique since the extension $F$ may not be unique.

The power-series for $F \in H^\infty(\bD_Q)$ provides us with a (formal) \emph{homogeneous expansion} $F = \sum_{k = 0}^\infty F_k$ for each $F \in H^{\infty}(\bD_Q)$, where \[
F_k(X) := \sum_{|\alpha| = k} c_{\alpha} X^{\alpha}, \, \forall X \in \bD_Q, \, k \in \bN \cup \{0\}.
\] The homogeneous expansion will be crucial so we highlight some of its properties below.

\begin{proposition} \label{prop_hom_expansion_cesaro_convergent} For each $k \in \bN \cup \{0\}$ and $F \in H^{\infty}(\bD_Q)$, let $F_k$ denote the $k^{\text{th}}$ term in the homogeneous expansion of $F$.
\begin{enumerate}
\item[$(1)$] The map $P_k : H^\infty(\bD_Q) \rightarrow \bC \langle Z \rangle$ given by $P_k(F) = F_k$ is completely contractive and continuous in the bounded-pointwise topology for each $k \in \bN \cup \{0\}$. 
\item[$(2)$] The Ces\`aro sums $\Sigma_k(F)$ of the homogeneous expansion of every $F \in H^\infty(\bD_Q)$ converge to $F$ in the bounded-pointwise topology.
\item[$(3)$] For every $F \in A(\bD_Q)$, the Ces\`aro sums $\Sigma_k(F)$ of the homogeneous expansion of every $F \in A(\bD_Q)$ converge to $F$ in the norm.
\end{enumerate}
\end{proposition}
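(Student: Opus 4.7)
The plan is to realize each homogeneous component $F_k$ as a Fourier coefficient of the periodic map $\theta \mapsto F(e^{i\theta}X)$, and to express the Ces\`aro sums as convolutions with the Fej\'er kernel. The key input from the earlier discussion is that $\bD_Q$ is rotation-invariant: $e^{i\theta}X \in \bD_Q$ for every $X \in \bD_Q$ and $\theta \in \bR$; moreover, for each such $X$ there is $\rho > 1$ (any $\rho < \|Q(X)\|^{-1}$ will do) with $\rho X \in \bD_Q$. Grouping the global nc power series for $F$ by total degree and substituting $\lambda X$ yields
\[
F(\lambda X) \;=\; \sum_{k=0}^{\infty} \lambda^k F_k(X), \qquad |\lambda| \leq 1,
\]
with absolute and uniform convergence in $\lambda$ on the closed unit disk, inherited from the stated uniform convergence of the power series on $r\bD_Q$ for $r < 1$.

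For part (1), Fourier inversion on the unit circle gives
\[
F_k(X) \;=\; \frac{1}{2\pi}\int_0^{2\pi} F(e^{i\theta}X)\, e^{-ik\theta}\, d\theta,
\]
from which $\|F_k(X)\| \leq \|F\|_\infty$ is immediate; applying the same integral representation entry-wise to matrix-valued $[F_{i,j}] \in M_r(H^\infty(\bD_Q))$ yields complete contractivity of $P_k$. Bounded-pointwise continuity of $P_k$ then follows by dominated convergence: a bounded net $F^{(\kappa)} \to F$ converging bounded-pointwise yields integrands $F^{(\kappa)}(e^{i\theta}X) \to F(e^{i\theta}X)$ with a common uniform bound, so the integrals converge pointwise in $X$.

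For part (2), the Ces\`aro sum admits the convolution representation
\[
\Sigma_N(F)(X) \;=\; \frac{1}{2\pi}\int_0^{2\pi} F(e^{i\theta}X)\, K_N(\theta)\, d\theta,
\]
where $K_N$ is the Fej\'er kernel. Positivity and unit mass ($\frac{1}{2\pi}\int K_N = 1$) yield the uniform bound $\|\Sigma_N(F)\|_\infty \leq \|F\|_\infty$, giving the required boundedness along the net. Pointwise convergence $\Sigma_N(F)(X) \to F(X)$ for each fixed $X \in \bD_Q$ is Fej\'er's theorem applied to the continuous periodic function $\theta \mapsto F(e^{i\theta}X)$, whose continuity follows from the enlarged-scaling observation above.

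Part (3) is a standard three-$\varepsilon$ argument. For a free nc polynomial $p$ the homogeneous expansion has only finitely many nonzero terms, so $\Sigma_N(p)$ is a finite linear combination of them with scalar coefficients tending to $1$, hence $\Sigma_N(p) \to p$ in sup-norm. Given $F \in A(\bD_Q)$ and $\varepsilon > 0$, approximate $F$ by a polynomial $p$ with $\|F - p\|_\infty < \varepsilon/3$; the contractivity established in part (2) yields $\|\Sigma_N(F) - \Sigma_N(p)\|_\infty < \varepsilon/3$, and large $N$ forces $\|\Sigma_N(p) - p\|_\infty < \varepsilon/3$. The main technical content of the proposition is thus confined to verifying that the one-variable Fej\'er machinery goes through at the operator-valued level, which is straightforward since rotation-invariance and slight radial enlargeability of $\bD_Q$ allow all arguments to be carried out pointwise at each $X \in \bD_Q$.
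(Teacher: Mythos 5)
Your argument is correct and is essentially the paper's own proof: both realize $F_k(X)$ as the $k$-th Fourier coefficient of $\theta \mapsto F(e^{i\theta}X)$ (the paper phrases this via the rotation automorphisms $\gamma_t$ and a vector-valued integral, you carry it out pointwise at each $X$), deduce complete contractivity and bounded-pointwise continuity from that integral formula, and obtain Ces\`aro convergence from the Fej\'er kernel. The only cosmetic difference is in part (3), where you use a density/three-$\varepsilon$ argument from the polynomial case instead of invoking the norm-continuity of $t \mapsto \gamma_t(F)$ for $F \in A(\bD_Q)$; both are standard and equivalent in substance.
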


\begin{proof}

This proof is similar to the one given in \cite[Lemma 6.1]{DRS11} for the commutative unit-ball $\bB_d$. We provide details below since we consider a much more general setup.

Fix $t \in [0,2\pi]$ and let $\gamma_t : H^\infty(\bD_Q) \rightarrow H^\infty(\bD_Q)$ be the map \[
[\gamma_t(F)](X) = F(e^{it} X), \, \forall X \in \bD_Q.
\] For all $t \in [0, 2 \pi)$, the map $\gamma_t$ is a bounded-pointwise continuous, completely isometric automorphism of $H^\infty(\bD_Q)$ that fixes $A(\bD_Q)$. 
Now, fix $F \in H^\infty(\bD_Q)$ and define a closed curve $\Gamma_F : [0,2\pi) \rightarrow H^\infty(\bD_Q)$ by \[
\Gamma_F(t) = \gamma_t(F), \, \forall t \in [0,2\pi].
\] This map is continuous in the bounded-pointwise topology. 
If $F \in A(\bD_Q)$, then $\Gamma_F$ takes values in $A(\bD_Q)$, and $\Gamma_F$ is norm continuous. 
This allows us to define for each $k \in \bN \cup \{0\}$ the map $\widetilde{P_k} : H^\infty(\bD_Q) \rightarrow H^\infty(\bD_Q)$ given by
\begin{align*}
\widetilde{P_k}(F) &= \frac{1}{2 \pi} \int_0^{2 \pi} \Gamma_F(t) e^{-ikt} dt \\
                   &= \frac{1}{2 \pi} \int_0^{2 \pi} \gamma_t(F) e^{-ikt} dt. 
\end{align*}
The integral is defined as a pointwise integral for $F \in H^\infty(\bD_Q)$ and as a norm convergent integral for $F \in A(\bD_Q)$. 
Note that for all $p \in \bC \langle Z \rangle$ and all $k \in \bN \cup \{0\}$, we have
\[
\widetilde{P_k}(p) = p_k = P_k(p).
\]
$\widetilde{P_k}$ is a complete contraction since it is an average of completely isometric automorphisms. 
By the dominated convergence theorem, it is also bounded-pointwise continuous. 
Therefore $\widetilde{P_k}$ maps into the finite-dimensional (hence closed) subspace $H^{(k)}$ of $\bC \langle Z \rangle$, consisting of homogeneous free nc polynomials of degree $k$. 

Lastly, for any $F \in H^\infty(\bD_Q)$ and any $X \in \bD_Q(n), \, n \in \bN$, the power-series of $F$ converges absolutely and uniformly in some neighborhood $r \bD_Q(n)$ ($r<1)$ containing $X$, which shows \[
\widetilde{P_k}(F)(X) = F_k(X) = P_k(F)(X), \,\forall X \in \bD_Q.
\] Using a standard argument involving the Fej\'er kernel, it then follows that the Ces\`aro sums of the homogeneous expansion of $F \in H^\infty(\bD_Q)$ converge in the bounded-pointwise topology to $F$, and if $F \in A(\bD_Q)$ then they converge in the norm.
\end{proof}

\begin{remark} \label{remark_h_infinity_weak_*_topology}
It is known that $H^\infty(\fB_d)$ is a multiplier algebra $H^{\infty}(\fB_d) = \Mult(\cH^2_d)$ (see \cite[Proposition 3.5]{SSS18}) and so it is endowed with a weak-operator topology. 
For bounded nets, convergence in the weak-operator topology is equivalent to pointwise convergence \cite[Lemma 2.5]{SSS18}. 
Being a multiplier algebra, $H^\infty(\fB_d)$ is also a dual algebra \cite[Corollary 2.6]{SSS18} with a weak-$*$ topology, and it is known that the Ces\`aro sums of the homogeneous expansion of every $F \in H^{\infty}(\fB_d)$ converge to $F$ in the weak-* topology \cite[Theorem 3.11]{SSS18}.
The weak-$*$ and weak-operator topology have been shown to coincide in the case of $H^\infty(\fB_d)$ by Davidson and Pitts \cite{DavPitts99}. 
As we saw in Theorem \ref{thm:mult_poly}, the bounded nc functions on a general nc operator ball $\bD_Q$ might not be naturally represented as a multiplier algebra, and so for our purposes here we concentrate on the bounded-pointwise topology on $H^{\infty}(\bD_Q)$.
One can describe it in terms of functionals but we do not pursue this here. 
\end{remark}

\subsection{Varieties and ideals} \label{subsection_varieties_and_homogeneous_ideals}

We shall now establish certain properties of homogeneous subvarieties in $\bD_Q$, and homogeneous ideals in $A(\bD_Q)$ and $H^{\infty}(\bD_Q)$. These properties will be essential when we obtain a homogeneous Nullstellensatz for these algebras. Our goal is to find a correspondence between homogeneous ideals and homogeneous varieties. For general nonhomogeneous varieties, such a correspondence is not possible, even for $\bD_Q = \fB_d$. We refer the reader to the discussion following \cite[Theorem 7.3]{SSS18} for more details.

Let $(\cA, \tau)$ denote either $A(\bD_Q)$ equipped with the norm topology, or $H^{\infty}(\bD_Q)$ equipped with the bounded-pointwise topology. We say that $\fV \subset \bD_Q$ is a {\em (holomorphic) subvariety} in $\bD_Q$ if it is the joint zero-set of some collection of functions in $\cA$. If one considers the joint zero-set of a collection of free nc polynomials then $\fV$ is said to be an {\em algebraic subvariety}. A subvariety $\fV$ (holomorphic or algebraic) is said to be {\em homogeneous} if $\lambda \fV \subseteq \fV$ for all $\lambda \in \overline{\bD}$. Given a holomorphic subvariety $\fV \subset \bD_Q$, the corresponding ideal $I(\fV) \triangleleft \cA$ is defined as
\begin{equation} \label{eqn:ideal_of_V_definition}
I(\fV) = \left\{ F \in \cA : F(X) = 0, \foral X \in \fV \right\}.
\end{equation}
One can similarly define a corresponding ideal in $\bC\langle Z \rangle$ by replacing $F \in \cA$ with $p \in \bC\langle Z \rangle$ in (\ref{eqn:ideal_of_V_definition}); note that we use the slightly different notation $I_p(\cdot)$ to avoid confusion:
\[
I_p(\fV) = \left\{ p \in \bC\langle Z \rangle : p(X) = 0, \foral X \in \fV \right\}.
\]
An ideal $\cJ \triangleleft \cA$ or $\cJ \triangleleft \bC \langle Z \rangle$ is said to be {\em homogeneous} if for all $F \in \cJ$, every homogeneous part $F_k$ of $F$ is also in $\cJ$. 
For a given ideal $\cJ \triangleleft \cA$ or $\cJ \triangleleft \bC \langle Z \rangle$, its corresponding variety $V(\cJ)$ is defined as
\[
V(\cJ) = \left\{ X \in \bD_Q : F(X) = 0, \foral F \in \cJ \right\}.
\]

Let $F \in \cA$. For $r < 1$ we let $F_r$ be the nc function $X \mapsto F(rX)$. 
It is clear that $F_r \in \cA$ and that $\|F_r\| \leq \|F\|$. 
By Proposition \ref{prop_hom_expansion_cesaro_convergent}, it follows that the homogeneous expansion of $F_r$ is given by $F_r = \sum_k r^k F_k$, and that this series is norm convergent to $F_r$ in $\cA$. In particular, the series is $\tau$-convergent to $F_r$ in $\cA$. This allows us to obtain the following characterization.

\begin{proposition} \label{prop_homogeneous_ideal_equivalence}

Let $\cJ \triangleleft \cA$ be a $\tau$-closed ideal. Then, $\cJ$ is homogeneous if and only if for every $F \in \cJ$ and $r < 1$, we have $F_r \in \cJ$.

\end{proposition}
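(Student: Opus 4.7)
The plan is to handle the two directions separately. \emph{Forward direction:} suppose $\cJ$ is homogeneous and $F\in\cJ$. By homogeneity each $F_k$ lies in $\cJ$, so the partial sums $\sum_{k\le N} r^k F_k$ lie in $\cJ$. The paragraph preceding the proposition records that this series is $\tau$-convergent to $F_r$ in $\cA$, and the $\tau$-closedness of $\cJ$ then places $F_r$ in $\cJ$.

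\emph{Reverse direction:} suppose $F_r\in\cJ$ for every $F\in\cJ$ and $r<1$. I plan to prove by induction on $k\geq 0$ that $F_k\in\cJ$ for every $F\in\cJ$, which yields homogeneity. The base $k=0$ is immediate from the hypothesis at $r=0$: $F_0 = F(0)\cdot 1 \in \cJ$ (and in fact $F(0)=0$ unless $\cJ=\cA$, a case in which homogeneity is trivial). For the inductive step, assume $F_0,\dots,F_{k-1}\in\cJ$ for every $F\in\cJ$, and replace $F$ by $\tilde F := F - \sum_{j<k} F_j \in \cJ$, whose homogeneous expansion begins at degree $k$. The hypothesis gives $\tilde F_r \in \cJ$ for each $r\in(0,1)$, and since $\cJ$ is an ideal, so is the scalar multiple
\[
\tilde F_r / r^k \;=\; F_k + r\,F_{k+1} + r^2\, F_{k+2} + \cdots .
\]
Letting $r\to 0^+$ will isolate the head term $F_k$, and $\tau$-closedness of $\cJ$ will then place $F_k$ in $\cJ$.

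The one point requiring care is the convergence $\tilde F_r/r^k \to F_k$ in the topology $\tau$. Using that the projections $P_j$ of Proposition~\ref{prop_hom_expansion_cesaro_convergent} are completely contractive, one has $\|F_{k+j}\|\le\|\tilde F\|$, so
\[
\bigl\|\tilde F_r/r^k - F_k\bigr\|
\;\le\; \sum_{j\geq 1} r^j\,\|F_{k+j}\|
\;\le\; \|\tilde F\|\cdot\frac{r}{1-r}
\;\xrightarrow[r\to 0^+]{}\; 0.
\]
For $\cA=A(\bD_Q)$ this is already the required norm convergence. For $\cA=H^\infty(\bD_Q)$ with the bounded-pointwise topology, restricting $r$ to $(0,1/2]$ bounds $\{\tilde F_r/r^k\}$ in norm by $2\|\tilde F\|$, while applying the same estimate to $\tilde F_r(X)/r^k - F_k(X)$ at each $X\in\bD_Q$ yields the required level-wise convergence in matrix norm. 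The forward direction is essentially a citation once the norm-convergent expansion is in hand, and the only real content — the induction above — rides on this single geometric-series estimate; no deeper obstacle is anticipated.
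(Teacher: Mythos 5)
Your proof is correct and follows essentially the same route as the paper: the forward direction is the same appeal to the norm-convergent expansion $F_r=\sum_k r^kF_k$ plus $\tau$-closedness, and your reverse-direction induction with $\tilde F_r/r^k=F_k+rF_{k+1}+\cdots$ and $r\to 0^+$ is exactly the paper's $F_{l+1}=\lim_{r\to 0}\bigl(F_r-\sum_{k=0}^{l}r^kF_k\bigr)/r^{l+1}$ in slightly different notation. The only difference is that you spell out the geometric-series estimate justifying the limit, which the paper leaves implicit.
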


\begin{proof}
Let $\cJ$ be a $\tau$-closed homogeneous ideal in $\cA$ and let $F \in \cJ$. As $F_k \in \cJ$ for each $k \in \bN \cup \{0\}$, and $\sum_k r^k F_k$ is $\tau$-convergent to $F_r$ in $\cA$ for all $r < 1$, we get that $F_r \in \cJ$.

Conversely, suppose $\cJ$ is a $\tau$-closed ideal in $\cA$ such that $F_r \in \cJ$ for all $r < 1$ and $F \in \cJ$. Let $F \in \cJ$ and note that since $\cJ$ is a proper ideal, it follows (by letting $r = 0$) that $F_0 = 0 \in \cJ$.
We can then inductively show that for all $l \in \bN \cup \{0\}$, we have
\begin{align*}
F_{l + 1} &= \lim_{r \rightarrow 0} \sum_{k = 0}^{\infty} r^k F_{k + l + 1} \\
&= \lim_{r \rightarrow 0} \frac{F_r - \sum_{k = 0}^l r^k F_k }{r^{l + 1}} \in \cJ.
\end{align*}
This completes the proof.
\end{proof}

Using the definitions and Proposition \ref{prop_homogeneous_ideal_equivalence} it is easy to see that, just as in the algebraic case, there is a natural connection between homogeneous ideals and varieties.

\begin{proposition} \label{prop_homogeneous_ideals_vs_varieties}
	
If $\fV \subset \bD_Q$ is a homogeneous variety then the corresponding ideal $I(\fV) \triangleleft \cA$ is homogeneous as well. Similarly, if $\cJ \triangleleft \cA$ or $\cJ \triangleleft \bC\langle Z \rangle$ is a homogeneous ideal then the corresponding variety $V(\cJ)$ is homogeneous as well.
	
\end{proposition}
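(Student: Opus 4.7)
The proposition splits into two implications, and my plan is to handle each using the tools developed immediately before its statement --- Proposition \ref{prop_hom_expansion_cesaro_convergent} (existence and convergence of homogeneous expansions) and Proposition \ref{prop_homogeneous_ideal_equivalence} (the $F_r$-criterion for homogeneity of $\tau$-closed ideals). The homogeneity of the ambient ball, $\overline{\bD}\,\bD_Q\subseteq\bD_Q$, is what makes both statements meaningful in the first place.

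For the first implication, I would reduce homogeneity of $I(\fV)$ to the criterion in Proposition \ref{prop_homogeneous_ideal_equivalence}. First I would check that $I(\fV)$ is $\tau$-closed: in both cases $\cA = A(\bD_Q)$ with the norm topology and $\cA = H^\infty(\bD_Q)$ with the bounded-pointwise topology, $\tau$-convergence entails pointwise convergence on $\bD_Q$, so the common zero set $I(\fV)$ is closed. Next, given $F\in I(\fV)$ and $r<1$, for any $X\in\fV$ homogeneity of $\fV$ gives $rX\in\fV$, and hence $F_r(X)=F(rX)=0$. Thus $F_r\in I(\fV)$ for all $r<1$, and Proposition \ref{prop_homogeneous_ideal_equivalence} applies to conclude that $I(\fV)$ is homogeneous.

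For the converse implication, I would use the power-series expansion directly. Fix $X\in V(\cJ)$ and $\lambda\in\overline{\bD}$; since $\bD_Q$ is homogeneous, $\lambda X\in \bD_Q$. For $F\in\cJ$ with homogeneous expansion $F=\sum_k F_k$, the series
\[
F(\lambda X)=\sum_{k=0}^{\infty}\lambda^k F_k(X)
\]
converges absolutely (this is the global power-series representation on $\bD_Q$ from Section \ref{subsection_bounded_nc_maps_on_D_Q}, and in the polynomial setting $\cJ\triangleleft \bC\langle Z\rangle$ it is just a finite sum). By hypothesis each $F_k\in\cJ$, and since $X\in V(\cJ)$ we get $F_k(X)=0$ for every $k$. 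Consequently $F(\lambda X)=0$ for every $F\in\cJ$, showing $\lambda X\in V(\cJ)$ and hence $V(\cJ)$ is homogeneous.

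No substantial obstacle is expected: both directions are direct applications of the already-established structural facts. The only point one has to state carefully is $\tau$-closedness of $I(\fV)$, which invokes the fact that pointwise evaluation is continuous in both the norm topology on $A(\bD_Q)$ and the bounded-pointwise topology on $H^\infty(\bD_Q)$ --- built into the definitions of these topologies.
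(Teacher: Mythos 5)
Your proof is correct and follows exactly the route the paper indicates (the paper leaves the proof to the reader, noting only that it follows ``using the definitions and Proposition \ref{prop_homogeneous_ideal_equivalence}''): the $F_r$-criterion for the first implication, and the convergent homogeneous expansion from Proposition \ref{prop_hom_expansion_cesaro_convergent} for the second. Both steps are carried out correctly, including the $\tau$-closedness of $I(\fV)$ needed to invoke Proposition \ref{prop_homogeneous_ideal_equivalence}.
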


Proposition \ref{prop_homogeneous_ideal_equivalence} is also valid for $\bC \lel Z \rir$, the proof of which is known and similar to ours.



\section{\bf The Algebras $\cA(\fV)$ and Quotients of $(\cA,\tau)$} \label{section_cAV_as_quotients}

The purpose of this section is to identify the algebras $A(\fV)$ and $H^\infty(\fV)$ as quotients of the parent algebras $A(\bD_Q)$ and $H^\infty(\bD_Q)$. 
For the algebras of the form $A(\fV)$ we need to require that $\fV$ is a homogeneous subvariety.
We begin with a homogeneous Nullstellensatz that clarifies the nature of the quotient in the homogeneous case.

\subsection{Homogeneous Nullstellensatz for $(\cA, \tau)$} \label{subsection_homogeneous_Nullstellensatz}

In order to show that a homogeneous Nullstellensatz holds for $(\cA, \tau)$ we need one last ingredient.

\begin{proposition} \label{prop_homogeneous_ideal_closure_property}

Given a $\tau$-closed ideal $\cJ \triangleleft \cA$, the following are equivalent:

\begin{enumerate}

\item $\cJ$ is homogeneous.

\item $\cJ = \overline{\cI}^{\tau}$ for some homogeneous $\cI \triangleleft \bC \langle Z \rangle$.

\end{enumerate}

\noindent Moreover, $\cJ \cap \bC \langle Z \rangle$ is the unique ideal in $\bC \langle Z \rangle$ whose closure in $\cA$ is $\cJ$.

\end{proposition}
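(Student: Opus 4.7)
The plan is to take $\cI := \cJ\cap\bC\langle Z\rangle$ as the natural candidate in $(1)\Rightarrow(2)$, to use $\tau$-continuity of dilation in $(2)\Rightarrow(1)$, and to deduce the uniqueness from the $\tau$-continuity of the homogeneous projections $P_k$ established in Proposition \ref{prop_hom_expansion_cesaro_convergent}.

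For $(1)\Rightarrow(2)$, set $\cI:=\cJ\cap\bC\langle Z\rangle$. This is an ideal of $\bC\langle Z\rangle$, and it is homogeneous: if $p=\sum_k p_k\in\cI$ is the homogeneous decomposition of $p$, then each $p_k\in\cJ$ by the hypothesis that $\cJ$ is homogeneous, and $p_k\in\bC\langle Z\rangle$ automatically, so $p_k\in\cI$. The inclusion $\overline{\cI}^{\tau}\subseteq\cJ$ follows from $\cI\subseteq\cJ$ and $\tau$-closedness of $\cJ$. For the reverse, take $F\in\cJ$; by Proposition \ref{prop_hom_expansion_cesaro_convergent} the Ces\`aro sums $\Sigma_N(F)$ of its homogeneous expansion are free polynomials that $\tau$-converge to $F$, while each homogeneous component $F_k=P_k(F)$ belongs to $\cJ$ by homogeneity, so $\Sigma_N(F)\in\cJ\cap\bC\langle Z\rangle=\cI$ and thus $F\in\overline{\cI}^{\tau}$.

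For $(2)\Rightarrow(1)$, assume $\cJ=\overline{\cI}^{\tau}$ for a homogeneous $\cI\triangleleft\bC\langle Z\rangle$. By Proposition \ref{prop_homogeneous_ideal_equivalence} it suffices to show $F_r\in\cJ$ for every $F\in\cJ$ and every $r<1$. The dilation $G\mapsto G_r$ is $\tau$-continuous: on $A(\bD_Q)$ it is a norm contraction because $r\bD_Q\subseteq\bD_Q$, and on $H^\infty(\bD_Q)$ with the bounded-pointwise topology it takes bounded nets to bounded nets while commuting with pointwise evaluation. Moreover $\cI$ is itself closed under dilation: if $q=\sum_k q_k\in\cI$, each homogeneous piece $q_k$ lies in $\cI$ by hypothesis, and $q_r=\sum_k r^k q_k\in\cI$ as a finite sum of elements of $\cI$. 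Picking a net $q_\lambda\in\cI$ with $q_\lambda\to F$ in $\tau$, continuity yields $(q_\lambda)_r\to F_r$ in $\tau$ with $(q_\lambda)_r\in\cI$, so $F_r\in\overline{\cI}^{\tau}=\cJ$.

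For the uniqueness assertion, read in the homogeneous context of the proposition as uniqueness among homogeneous ideals in $\bC\langle Z\rangle$, suppose $\cI'\triangleleft\bC\langle Z\rangle$ is homogeneous with $\overline{\cI'}^{\tau}=\cJ$. One containment, $\cI'\subseteq\cJ\cap\bC\langle Z\rangle$, is immediate. Conversely, let $p=\sum_{k=0}^N p_k\in\cJ\cap\bC\langle Z\rangle$; by homogeneity of $\cJ$ each $p_k\in\cJ=\overline{\cI'}^{\tau}$, so pick a net $q_\mu\in\cI'$ with $q_\mu\to p_k$ in $\tau$. Applying the $\tau$-continuous projection $P_k$ of Proposition \ref{prop_hom_expansion_cesaro_convergent} gives $P_k(q_\mu)\to p_k$, and by homogeneity of $\cI'$ each $P_k(q_\mu)$ lies in $\cI'\cap H^{(k)}$, a subspace of the finite-dimensional space $H^{(k)}$ of homogeneous polynomials of degree $k$, which is automatically $\tau$-closed. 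Hence $p_k\in\cI'$ for every $k$, so $p\in\cI'$. The main subtlety I expect is the $\tau$-continuity of dilation and of $P_k$ on $H^\infty(\bD_Q)$ with the bounded-pointwise topology, but both reduce to the basic contractive estimate $\|G_r\|_\infty\leq\|G\|_\infty$ and to the integral representation of $P_k$ as an average of completely isometric automorphisms already exploited in Proposition \ref{prop_hom_expansion_cesaro_convergent}; the finite-dimensionality of $H^{(k)}$ then makes the closure step in uniqueness automatic.
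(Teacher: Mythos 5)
Your proof is correct, and while the equivalence $(1)\Leftrightarrow(2)$ essentially matches the paper (your $(1)\Rightarrow(2)$ is identical; your $(2)\Rightarrow(1)$ goes through Proposition \ref{prop_homogeneous_ideal_equivalence} and the $\tau$-continuity of dilation, whereas the paper applies the continuous projections $P_k$ directly to an approximating net from $\cI$ — a cosmetic difference), the uniqueness assertion is where you take a genuinely different route. The paper passes to varieties: it observes $V(\cJ_0)=V(\cJ\cap\bC\langle Z\rangle)$, restricts to a small copy of $\fB_d$ inside $\bD_Q$, and invokes the homogeneous Nullstellensatz for free polynomials in $\fB_d$ from \cite[Theorem 7.3]{SSS18} to conclude $\cJ_0=\cJ\cap\bC\langle Z\rangle$. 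You instead argue purely algebraically/topologically: each homogeneous component $p_k$ of $p\in\cJ\cap\bC\langle Z\rangle$ is a $\tau$-limit of elements $P_k(q_\mu)\in\cI'\cap H^{(k)}$, and since $\cI'\cap H^{(k)}$ is a subspace of the finite-dimensional space $H^{(k)}$ it is closed under such limits (the net $P_k(q_\mu)$ is bounded because $P_k$ is contractive, so pointwise convergence of these degree-$k$ homogeneous polynomials forces coefficientwise, hence norm, convergence). Your argument is more self-contained — it avoids importing the free Nullstellensatz and the detour through zero sets — while the paper's version has the side benefit of exhibiting the variety-theoretic identities that are reused in the subsequent Nullstellensatz (Theorem \ref{thm_nullstellensatz_for_homogenous_poyhedra}). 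Both proofs establish uniqueness only within the class of homogeneous ideals of $\bC\langle Z\rangle$, consistent with how the paper's own proof reads the statement.
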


\begin{proof}

$(1) \Rightarrow (2)$ : Suppose $\cJ \triangleleft \cA$ is $\tau$-closed and homogeneous. 
It is easy to check that $\cI := \cJ \cap \bC \lel Z \rir$ is a homogeneous ideal in $\bC \lel Z \rir$. 
The inclusion $\overline{\cI}^{\tau} \subseteq \cJ$ is clear, since $\cJ$ is $\tau$-closed. 
For the reverse inclusion, let $F = \sum_k F_k \in \cJ$. Note that $F_k \in \cJ$ for each $k \in \bN \cup \{0\}$, and thus $F_k \in \cI$ as well. This implies that the Ces\`aro sums $\Sigma_k(F)$ of the homogeneous expansion of $F$ are in $\cI$ for each $k$, and thus $F \in \overline{\cI}^{\tau}$ by Proposition \ref{prop_hom_expansion_cesaro_convergent}.

$(2) \Rightarrow (1) $ : Let $\cJ = \overline{\cI}^{\tau}$ for some homogeneous $\cI \triangleleft \bC \lel Z \rir$. 
Given $F = \sum_k F_k \in \cJ$, let $\{p^{(\kappa)}\}_{\kappa \in I} \in \cI$ be a net that is $\tau$-convergent to $F$. Using Proposition \ref{prop_hom_expansion_cesaro_convergent}, we get
\[
p^{(\kappa)}_k = P_k(p^{(\kappa)}) \rightarrow P_k(F) = F_k, \,\forall k \in \bN \cup \{0\}.
\]
Since $p^{(\kappa)}_k \in \cI$ for each $\kappa$ and $k$, we get that $F_k \in \overline{\cI}^{\tau} = \cJ$ for each $k$.

It only remains to show that $\cJ \cap \bC \langle Z \rangle$ is the unique such ideal in $\bC \langle Z \rangle$. 
Suppose $\cJ_0 \triangleleft \bC \langle Z \rangle$ is another homogeneous ideal such that $\overline{\cJ_0} = \cJ$, then note that $V(\cJ_0) = V(\cJ)$.
Now, $\cJ \cap \bC \langle Z \rangle$ is also a homogeneous ideal such that
\[
V(\cJ \cap \bC \langle Z \rangle) = V(\cJ).
\]
Choosing $r > 0$ small enough (as in (\ref{eqn:uniform_openness_of_D_Q})) so that $r \fB_d \subset r \fD_d \subset \bD_Q$, we get
\[
V(\cJ_0) \cap r \fB_d = V(\cJ) \cap r \fB_d = V(\cJ \cap \bC \langle Z \rangle) \cap r \fB_d.
\]
Since $\cJ_0$ and $\cJ \cap \bC \lel Z \rir$ are both homogeneous ideals in $\bC \lel Z \rir$, it follows from Proposition \ref{prop_homogeneous_ideals_vs_varieties} that $V(\cJ_0)$ and $V(\cJ \cap \bC \lel Z \rir)$ are homogeneous varieties. Therefore,
\[
V_{\fB_d}(\cJ_0) = V_{\fB_d}(\cJ \cap \bC \lel Z \rir)
\]
where for all $\cI \triangleleft \bC \lel Z \rir$,
\[
V_{\fB_d}(\cI) := \{ X \in \fB_d : p(X) = 0, \, p \in \cI \}.
\]
By the homogeneous Nullstellensatz for $\bC \langle Z \rangle$ in $\fB_d$ (see \cite[Theorem 7.3]{SSS18}), we get that
\[
\cJ_0 = I(V_{\fB_d}(\cJ)) = \cJ \cap \bC \langle Z \rangle.
\]
This completes the proof.
\end{proof}

\begin{remark} \label{remark_algebraic = holomorphic_homogoneous_varieties}

Proposition \ref{prop_homogeneous_ideal_closure_property} shows that homogeneous holomorphic varieties are the same as homogeneous algebraic varieties in $\bD_Q$, since for any $\cJ \triangleleft \cA$ we have
\[
V(\cJ) = V(\cJ \cap \bC \lel Z \rir).
\]

\end{remark}

We are now ready to prove the homogeneous Nullstellensatz for $(\cA, \tau)$.

\begin{theorem}[Homogeneous Nullstellensatz] \label{thm_nullstellensatz_for_homogenous_poyhedra}

$(1)$ If $\cJ \triangleleft \bC \lel Z \rir$ is a homogeneous ideal, then
\[
I_p(V(\cJ)) = \cJ.
\]

$(2)$ If $\cJ \triangleleft \cA$ is a homogeneous ideal, then
\[
I(V(\cJ)) = \overline{\cJ}^{\tau}.
\]

\end{theorem}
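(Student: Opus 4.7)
The plan is to establish the two parts separately. Part (1) will be a scaling-based reduction to the known homogeneous Nullstellensatz for polynomials on $\fB_d$ from \cite[Theorem 7.3]{SSS18}. Part (2) will then follow from part (1) by expanding $F$ into its homogeneous components and applying the Cesàro convergence of Proposition \ref{prop_hom_expansion_cesaro_convergent} to approximate $F$ by elements already contained in $\cJ$.

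For part (1), the inclusion $\cJ \subseteq I_p(V(\cJ))$ is immediate. For the converse, fix $r > 0$ small enough that $r\fB_d \subset \bD_Q$; such $r$ exists by the estimate \eqref{eqn:uniform_openness_of_D_Q} (indeed, $\fB_d \subset \fD_d$, so any $r < 1/(d\max_j\|Q_j\|)$ works). Let $X \in V_{\fB_d}(\cJ)$. For any homogeneous $q \in \cJ$ of degree $k$ we have $q(rX) = r^k q(X) = 0$, and since $\cJ$ is spanned by its homogeneous parts this yields $rX \in V(\cJ)$. Given $p \in I_p(V(\cJ))$, write $p = \sum_k p_k$ in homogeneous components, so that $0 = p(rX) = \sum_k r^k p_k(X)$ for every sufficiently small $r > 0$; this polynomial identity in $r$ forces $p_k(X) = 0$ for each $k$, and hence $p(X) = 0$. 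Therefore $p \in I_p(V_{\fB_d}(\cJ))$, which by \cite[Theorem 7.3]{SSS18} equals $\cJ$.

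For part (2), the inclusion $\overline{\cJ}^\tau \subseteq I(V(\cJ))$ is clear, since pointwise vanishing on $V(\cJ)$ is preserved under both norm limits and bounded-pointwise limits, so $I(V(\cJ))$ is $\tau$-closed in $\cA$. For the reverse, let $F \in I(V(\cJ))$ and expand $F = \sum_k F_k$ into its homogeneous components. For any $X \in V(\cJ)$, homogeneity of $V(\cJ)$ gives $\lambda X \in V(\cJ)$ for every $\lambda \in \overline{\bD}$, and the absolutely convergent power series
\[
0 = F(\lambda X) = \sum_{k = 0}^\infty \lambda^k F_k(X), \qquad \lambda \in \bD,
\]
forces $F_k(X) = 0$ for every $k$ by uniqueness of the Taylor expansion. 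Thus each $F_k$ vanishes on $V(\cJ)$. By Remark \ref{remark_algebraic = holomorphic_homogoneous_varieties} we have $V(\cJ) = V(\cJ \cap \bC\langle Z\rangle)$, and by Proposition \ref{prop_homogeneous_ideal_closure_property} the ideal $\cJ \cap \bC\langle Z\rangle$ is homogeneous in $\bC\langle Z\rangle$. Applying part (1) to this polynomial ideal yields $F_k \in \cJ \cap \bC\langle Z\rangle \subseteq \cJ$ for every $k$. The Cesàro sums $\Sigma_k(F)$ are finite linear combinations of the $F_k$ and hence lie in $\cJ$; by Proposition \ref{prop_hom_expansion_cesaro_convergent} they $\tau$-converge to $F$, so $F \in \overline{\cJ}^\tau$.

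The main subtlety is the reduction in part (1) between the ambient ball $\bD_Q$ and the row ball $\fB_d$: one must verify that homogeneity of the ideal correctly transfers vanishing information between the two balls, and that the scaling argument recovers vanishing of each homogeneous part on the whole of $V_{\fB_d}(\cJ)$ rather than just on a shrunken piece. Once this is in place, part (2) is largely formal, requiring only the homogeneous expansion and the $\tau$-convergence of Cesàro sums established earlier in both the norm and the bounded-pointwise settings.
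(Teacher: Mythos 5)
Your proof is correct and follows essentially the same route as the paper: part (1) is a scaling reduction to the homogeneous Nullstellensatz for $\fB_d$ of \cite[Theorem 7.3]{SSS18}, and part (2) follows from part (1) together with the Ces\`aro convergence of the homogeneous expansion (Proposition \ref{prop_hom_expansion_cesaro_convergent}). The only differences are cosmetic streamlinings --- in part (1) you close the chain $\cJ \subseteq I_p(V(\cJ)) \subseteq I_p(V_{\fB_d}(\cJ)) = \cJ$ with a single inclusion extracted via a polynomial identity in the scaling parameter, and in part (2) you unpack the content of Proposition \ref{prop_homogeneous_ideal_closure_property} directly rather than citing it.
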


\begin{proof}

$(1)$ Proposition \ref{prop_homogeneous_ideals_vs_varieties} shows that $V(\cJ)$ is a homogeneous variety and $I_p(V(\cJ))$ is a homogeneous ideal. Also, by the homogeneous Nullstellensatz for subvarieties of $\fB_d$ \cite[Theorem 7.3]{SSS18}, we know that $I_p(V_{\fB_d}(\cJ)) = \cJ$. Therefore, it suffices to show that $I_p(V(\cJ))$ and $I_p(V_{\fB_d}(\cJ))$ contain the same collection of homogeneous polynomials.

Let $r > 0$ be small enough as in (\ref{eqn:uniform_openness_of_D_Q}) so that $r \fB_d \subset \bD_Q$, and let $p \in I_p\left(V_{\fB_d}\left(\cJ \right)\right)$ be an arbitrary homogeneous polynomial of degree $k \in \bN$. Thus, for every $X \in r \fB_d \cap V(\cJ)$ we get $p(X) = 0$. Given any $X \in V(\cJ)$, let $\lambda \in \overline{\bD} \setminus \{0\}$ be such that $\lambda X \in r \fB_d$. We then get
\[
p(X) = \frac{1}{\lambda^k} p(\lambda X) = 0.
\]
Therefore, $p \in I_p(V(\cJ))$ and we obtain the inclusion
\[
I_p(V_{\fB_d}(\cJ)) \subseteq I_p(V(\cJ)).
\]
For the reverse inclusion, let $p \in I_p(V(\cJ))$ be a homogeneous polynomial of degree $k \in \bN$. Thus, for every $X \in r \fB_d \cap V(\cJ)$ we get $p(X) = 0$. By the homogeneity of $p$, it then follows that $p(X) = 0$ for every $X \in \fB_d$. Thus, $p \in I_p(V_{\fB_d}(\cJ))$ and we get the required inclusion
\[
I_p(V(\cJ)) \subseteq I_p(V_{\fB_d}(\cJ)).
\]

$(2)$ Note that for any $\cJ \triangleleft \cA$, we have $V(\cJ) = V(\overline{\cJ}^{\tau})$ and $I(V(\cJ)) = I\left(V(\overline{\cJ}^{\tau})\right)$.
Therefore, without loss of generality, we can assume $\cJ \triangleleft \cA$ is a $\tau$-closed homogeneous ideal. Let $\cJ_0 := \cJ \cap \bC \lel Z \rir$ and $\cI_0 := I(V(\cJ)) \cap \bC \lel Z \rir$. Using $(1)$, we get
\begin{align*}
I_p(V(\cJ_0)) &= \cJ_0,\\
\implies \overline{I_p(V(\cJ_0))}^{\tau} &= \overline{\cJ_0}^{\tau} = \cJ.
\end{align*}
Note that since $V(\cJ_0) = V(\cJ)$, we get $I_p(V(\cJ_0)) = \cI_0$. Proposition \ref{prop_homogeneous_ideal_closure_property} then gives us
\[
I(V(\cJ)) = \overline{\cI_0}^{\tau} = \overline{I_p(V(\cJ_0))}^{\tau} = \cJ,
\]
which completes the proof.
\end{proof}

\begin{remark}

All questions treated in this section have natural counterparts in the setting of infinitely many variables. 
In particular, one might wonder whether a similar Nullstellensatz holds if $d = \infty$. 
For the case of the nc row unit ball, it was shown in \cite[Proposition 3.8]{HarSha23} that there exists a closed homogeneous ideal $\cJ \subset A(\fB_\infty)$ such that $I(V(\cJ)) \neq \cJ$.

The proof of this result does not apply directly to the case of general infinite dimensional operator balls, and in particular to $\fD_\infty$, as it uses the infinite variable nc Drury-Arveson space, but it can be modified somewhat to work in other cases.

\end{remark}

\begin{example}
Let $\cJ$ be the closed ideal in $A(\fD_\infty)$ generated by the monomials $Z_k Z_j$ for all $k, j \in \bN$ with $k \neq j$, and the polynomials $2 Z_{n+1}^2 - Z_n^2$ for all $n \in \bN$. 
Then $\cJ$ is a homogeneous ideal. 
Moreover, by \cite[Proposition 3.8]{HarSha23}, for all $n$, one cannot approximate $Z_n^2$ by linear combinations of the generators of $\cJ$ in the sup norm in $\fB_d$. 
But since $\|\cdot\|_{\fB_\infty} \leq \|\cdot\|_{\fD_\infty}$, we obtain that $Z_n^2 \notin \cJ$ for all $n$. Now, as in \cite[Proposition 3.8]{HarSha23}, if $T = (T_1,T_2, \ldots ) \in V(\cJ)$, then by finite dimensionality there exists some $k \in \bN$ such that
\[
T_k = \sum_{j \neq k} \lambda_j T_j,
\]
where all but finitely many $\lambda_j \in \bC$ are $0$. 
Multiplying by $T_k$ then gives
\[
T_k^2 = \sum_{j\neq k} \lambda_j T_k T_j = 0,
\]
because $T$ annihilates $Z_k Z_j \in \cJ$ for all $j \neq k$. 
But then $T_n^2 = 0$ for all $n$ because $T$ is assumed to annihilate $2 Z_{n+1}^2 - Z_n^2 \in \cJ$.
It follows that $Z_n^2 \in \cI(V(\cJ))$ for all $n$, so $\cJ \subsetneq \cI(V(\cJ))$. 
\end{example}

We also have a trivial version of the Nullstellensatz that is required in later sections.

\begin{proposition}[Trivial Nullstellensatz]\label{prop:trivial nullstellensatz}
If $S \subseteq \bD_Q$ is any set, then 
\[
I(V(I(S))) = I(S). 
\]
Likewise, for every subvariety $\fV \subseteq \bD_Q$, 
\[
V(I(\fV)) = \fV. 
\]
\end{proposition}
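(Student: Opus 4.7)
The proof should be a direct consequence of the elementary Galois-connection properties of the pair of operators $I(\cdot)$ and $V(\cdot)$, which is exactly the reason this is dubbed the \emph{trivial} Nullstellensatz. The plan is first to record the two basic order-reversing inclusions that hold tautologically from the definitions, and then to deduce both equalities by chasing these inclusions.

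The two facts I would state up front are: (a) for any $S \subseteq \bD_Q$ and any subset $J \subseteq \cA$, we have $S \subseteq V(I(S))$ and $J \subseteq I(V(J))$, each of which follows immediately by unwinding the definition of $I$ and $V$ from the excerpt; and (b) both $I$ and $V$ are inclusion-reversing, i.e. $S_1 \subseteq S_2 \Rightarrow I(S_2) \subseteq I(S_1)$ and $J_1 \subseteq J_2 \Rightarrow V(J_2) \subseteq V(J_1)$, again immediate from the definitions.

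For the first identity $I(V(I(S))) = I(S)$, I would apply (b) to the inclusion $S \subseteq V(I(S))$ from (a) to obtain $I(V(I(S))) \subseteq I(S)$, and then apply (a) with $J = I(S)$ to obtain the reverse inclusion $I(S) \subseteq I(V(I(S)))$. For the second identity $V(I(\fV)) = \fV$, the inclusion $\fV \subseteq V(I(\fV))$ is (a). For the reverse, I would use the assumption that $\fV$ is a subvariety, so by definition there is some subset $J_0 \subseteq \cA$ with $\fV = V(J_0)$; then every $F \in J_0$ vanishes on $\fV$, giving $J_0 \subseteq I(\fV)$, and applying (b) yields $V(I(\fV)) \subseteq V(J_0) = \fV$.

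There is no real obstacle here: the only point that requires any thought at all is remembering to use, in the second identity, that $\fV$ is \emph{given} as a subvariety, so that it is of the form $V(J_0)$ for some $J_0$ — without this hypothesis the reverse inclusion would fail.
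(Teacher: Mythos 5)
Your proof is correct and follows essentially the same route as the paper: the tautological containments plus order-reversal of $I(\cdot)$ and $V(\cdot)$ give the first identity, and writing $\fV = V(J_0)$ gives the second. The paper merely says "the other assertion follows similarly," so your explicit use of the hypothesis that $\fV$ is a subvariety is a correct fleshing-out of exactly the point the paper leaves implicit.
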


\begin{proof}
Tautologically, $I(V(I(S))) \supseteq I(S)$. 
On the other hand, the operations $I(\cdot)$ and $V(\cdot)$ are order reversing. 
Thus, applying $I(\cdot)$ to the containment $V(I(S)) \supseteq S$, we obtain $I(V(I(S))) \subseteq I(S)$. 
This gives the desired equality. 
The other assertion follows similarly. 
\end{proof}

\subsection{The $\tau$-closed algebras $\cA(\fV)$} \label{subsection_algebras_A(V)_over_homogeneous_nc_varieties}

Let $\fV \subset \bD_Q$ be a subvariety in $\bD_Q$. For convenience of notation, let $\cA(\fV)$ denote the algebra $A(\fV)$ if $\cA = A(\bD_Q)$, and $H^{\infty}(\fV)$ if $\cA = H^{\infty}(\bD_Q)$. One would like to know whether $\cA(\fV)$ is related to $\cA$ and if so, what that relation is.

In this Section we show that $\cA(\fV)$ can be realized as a quotient of $\cA$, just as in the case of $A(\fB_d)$ (see \cite[Proposition 9.7]{SSS18}). First, we mention an immediate consequence of Theorem \ref{theorem_BMV_interpolation_full_sets} that will be necessary for the main result in this section.

\begin{proposition} \label{prop_extension_to_H_infty_from_variety}

For every $f \in M_n(\cA(\fV))$, there exists $F \in M_n(H^{\infty}(\bD_Q))$ such that
\[
f = F \rvert_{\fV} \text{ and } \|F\| = \|f\|.
\]
\end{proposition}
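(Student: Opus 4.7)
The plan is to reduce the statement to a direct application of Theorem \ref{theorem_BMV_interpolation_full_sets}. Two things need to be checked: (i) every subvariety $\fV \subseteq \bD_Q$ (holomorphic, in the sense of this section) is relatively full in $\bD_Q$; and (ii) a matrix-valued element $f \in M_n(\cA(\fV))$ is naturally a nc map $\fV \to \cL(\bC^n,\bC^n)_{\mathrm{nc}}$ whose sup-norm over $\fV$ coincides with its matrix norm $\|f\|_{M_n(\cA(\fV))}$. Once (i) and (ii) are in place, Theorem \ref{theorem_BMV_interpolation_full_sets} applied with $(\cU,\cV) = (\bC^n,\bC^n)$ delivers exactly the statement.

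For (i), I would use the intertwining property of nc functions. Write $\fV = \{X \in \bD_Q : g(X) = 0 \text{ for all } g \in S\}$ for some $S \subseteq \cA \subseteq H^\infty(\bD_Q)$, and suppose $X \in \fV$ together with $\widetilde X \in \bD_Q$ satisfy $I\widetilde X = XI$ for some injective $I$. Since each $g \in S$ is a nc function on all of $\bD_Q$, it respects intertwinings at the pair $(X,\widetilde X)$, giving
\[
I\, g(\widetilde X) \;=\; g(X)\, I \;=\; 0.
\]
Injectivity of $I$ forces $g(\widetilde X)=0$ for every $g \in S$, hence $\widetilde X \in \fV$. This is exactly the relative fullness of $\fV$ in $\bD_Q$, and it is the one place where the subvariety hypothesis is used.

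For (ii), I would observe that an element $f = [f_{ij}] \in M_n(\cA(\fV))$ is nothing but the nc map $\widehat f : \fV \to \cL(\bC^n,\bC^n)_{\mathrm{nc}}$ sending $X \in \fV(k)$ to $[f_{ij}(X)] \in M_n(M_k) = \cL(\bC^{nk})$. The nc property of $\widehat f$ follows entrywise from the nc property of each $f_{ij}$, and the operator space structure on $\cA(\fV) \subseteq H^\infty(\fV)$ is defined precisely so that
\[
\|f\|_{M_n(\cA(\fV))} \;=\; \sup_{X \in \fV} \bigl\|[f_{ij}(X)]\bigr\| \;=\; \sup_{X \in \fV}\|\widehat f(X)\|.
\]

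With (i) and (ii) established, I would apply Theorem \ref{theorem_BMV_interpolation_full_sets} to the relatively full nc subset $\fX := \fV$ of $\bD_Q$ and the nc map $S_0 := \widehat f$, obtaining a nc extension $\widehat F : \bD_Q \to \cL(\bC^n,\bC^n)_{\mathrm{nc}}$ satisfying $\widehat F|_\fV = \widehat f$ and $\sup_{X \in \bD_Q}\|\widehat F(X)\| = \sup_{X \in \fV}\|\widehat f(X)\|$. Reading $\widehat F$ back entrywise yields $F = [F_{ij}] \in M_n(H^\infty(\bD_Q))$ with $F|_\fV = f$ and $\|F\| = \|f\|$, as required. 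The main (and only nontrivial) obstacle is the verification of (i); it is mild but essential, since an arbitrary nc subset of $\bD_Q$ need not be relatively full, and without this the BMV interpolation theorem does not apply.
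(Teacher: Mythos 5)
Your proposal is correct and follows exactly the paper's route: the paper's proof is the one-line observation that the result follows from Theorem \ref{theorem_BMV_interpolation_full_sets} together with the fact that subvarieties are relatively full nc subsets. Your verification of relative fullness via the intertwining property of nc functions, and of the matrix-norm identification, simply fills in the details the paper leaves implicit.
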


\begin{proof}
	
Follows from Theorem \ref{theorem_BMV_interpolation_full_sets} and the fact that varieties are a relatively full nc subset.	
\end{proof}

\begin{theorem} \label{thm_A(V)_is_quotient_of_A(D_d)}

Let $\fV \subset \bD_Q$ be a variety. Then, $R : \cA \rightarrow \cA(\fV)$ defined as
\[
R(F) = F\rvert_\fV, \,  \forall F \in \cA
\]
is a complete contraction with $\ker(R) = I(\fV)$. 
Also, the induced map $\overline{R} : \cA/I(\fV) \rightarrow \cA(\fV)$ is a completely isometric isomorphism of algebras in either one of the following two cases:
\begin{enumerate}
\item $\cA = H^\infty(\bD_Q)$. 
\item $\fV$ is homogeneous. 
\end{enumerate}

\end{theorem}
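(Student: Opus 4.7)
The first parts of the theorem are essentially tautological: that $R$ is a complete contraction follows from the fact that restriction of a nc function (or a matrix thereof) to a nc subset can only decrease the sup-norm, and the equality $\ker R = I(\fV)$ is just the definition of $I(\fV)$ rephrased. Consequently, the induced map $\overline{R}\colon \cA/I(\fV) \to \cA(\fV)$ is automatically a well-defined, injective complete contraction. To promote this to a completely isometric isomorphism, the plan is to verify the standard complete quotient criterion: for each $n \in \bN$, each $f \in M_n(\cA(\fV))$, and each $\epsilon>0$, produce a lift $F \in M_n(\cA)$ with $F\vert_\fV = f$ and $\|F\| \leq \|f\| + \epsilon$. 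Injectivity of $\overline{R}$ then forces the unique preimage of $f$ in the quotient to have norm at most $\|f\| + \epsilon$, which suffices.

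In case (1), where $\cA = H^\infty(\bD_Q)$, this lifting requirement is exactly Proposition \ref{prop_extension_to_H_infty_from_variety} (with $\epsilon=0$), so all the real work is in case (2), where $\cA = A(\bD_Q)$ and $\fV$ is assumed homogeneous. The strategy here is an iterative Abel-type construction resting on two observations. First, because $\fV$ is homogeneous, for $g \in M_n(A(\fV))$ and $r \in (0,1)$ the dilate $g_r(X) := g(rX)$ is again in $M_n(A(\fV))$, satisfies $\|g_r\| \leq \|g\|$, and $g_r \to g$ in $M_n(A(\fV))$ as $r \to 1$; the last assertion follows from a $3\epsilon$-argument in which one approximates $g$ by a polynomial $p$, bounds $\|p - p_r\|_\fV$ by continuity of $p$ on the bounded set $\fV$, and dominates $\|g_r - p_r\|_\fV$ by $\|g - p\|_\fV$ using $r\fV \subseteq \fV$. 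Second, for $G \in M_n(H^\infty(\bD_Q))$ and $r < 1$, the dilate $G_r$ lies in $M_n(A(\bD_Q))$ with $\|G_r\| \leq \|G\|$: the global power series of $G$, which exists by uniform openness of $\bD_Q$, converges absolutely and uniformly on $r\bD_Q$, so its partial sums (which are polynomials) converge to $G_r$ uniformly on $\bD_Q$.

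Equipped with these tools, the construction proceeds as follows. Set $h_0 := f$. Given $h_k$, extend it to $G^{(k)} \in M_n(H^\infty(\bD_Q))$ with $\|G^{(k)}\| = \|h_k\|$ via Proposition \ref{prop_extension_to_H_infty_from_variety}; choose $r_k \in (0,1)$ close enough to $1$ so that $\|(h_k)_{r_k} - h_k\| < \epsilon/2^{k+1}$, and set $h_{k+1} := h_k - (h_k)_{r_k}$, so that $\|h_{k+1}\| < \epsilon/2^{k+1}$. By the second observation, $(G^{(k)})_{r_k} \in M_n(A(\bD_Q))$ with $\|(G^{(k)})_{r_k}\| \leq \|h_k\|$, and by homogeneity of $\fV$ its restriction to $\fV$ equals $(h_k)_{r_k} = h_k - h_{k+1}$. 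Hence the series $F := \sum_{k\geq 0} (G^{(k)})_{r_k}$ converges absolutely in $M_n(A(\bD_Q))$ with $\|F\| \leq \|f\| + \epsilon$, and since its restrictions to $\fV$ telescope, $F\vert_\fV = h_0 - \lim_k h_k = f$, producing the required lift.

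The principal obstacle is that the isometric extension in Proposition \ref{prop_extension_to_H_infty_from_variety} lands in $H^\infty(\bD_Q)$ rather than in $A(\bD_Q)$, so in case (2) one has to trade some norm for polynomial approximability. The dilation $G \mapsto G_r$ does precisely that --- it places us inside $A(\bD_Q)$ at the cost of perturbing the restriction to $\fV$ by the controllable amount $g - g_r$ --- and the geometric recursion above absorbs these successive errors without exceeding the allotted norm budget $\|f\| + \epsilon$. Homogeneity of $\fV$ is essential at both steps: it ensures $r\fV \subseteq \fV$ so that dilation is defined on $\fV$, and it yields the convergence $g_r \to g$ in $A(\fV)$ that drives the iteration.
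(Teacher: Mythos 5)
Your proof is correct, and in case (2) it takes a genuinely different route from the paper's, although both rest on the same two ingredients: the isometric extension into $H^\infty(\bD_Q)$ (Proposition \ref{prop_extension_to_H_infty_from_variety}) and the fact that the dilation $G \mapsto G_r$ carries $H^\infty(\bD_Q)$ contractively into $A(\bD_Q)$, combined with homogeneity of $\fV$ to compute $(G_r)\vert_\fV = (G\vert_\fV)_r$. The paper avoids any iteration by working only on the dense subalgebra $\bC\langle Z\rangle\vert_\fV$: given a polynomial $p$, it \emph{pre-dilates} to $q = p(\cdot/t)$, which still makes sense as a polynomial on all of $\bD_Q$, has restricted norm at most $(1+\epsilon)\|p\vert_\fV\|$ for $t$ close to $1$, and has the property that extending $q\vert_\fV$ to $G \in H^\infty(\bD_Q)$ and then post-dilating gives $G_t \in A(\bD_Q)$ restricting \emph{exactly} to $p\vert_\fV$; complete contractivity of $\overline{R}^{-1}$ on this dense subalgebra then finishes the argument by continuity. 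That pre-dilation trick is unavailable for a general $f \in M_n(A(\fV))$ (since $f(\cdot/t)$ need not be defined), which is exactly the obstruction your successive-approximation scheme circumvents: you extend, post-dilate, and then iteratively correct the resulting perturbation $h_k - (h_k)_{r_k}$ with a geometrically summable norm budget. What your approach buys is an explicit lift of an \emph{arbitrary} $f \in M_n(A(\fV))$ with $\|F\| \leq \|f\| + \epsilon$, rather than only a bound on the quotient norm of polynomial classes; what it costs is the extra bookkeeping of the telescoping series and the preliminary verification that $g_r \to g$ in $M_n(A(\fV))$, which your $3\epsilon$-argument handles correctly (and which the paper only records later, in Corollary \ref{corollary_f_r_on_a_homogeneous_subvariety]} and Proposition \ref{proposition_uniform_continuity_of_A(V)_functions}, as consequences of the theorem). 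Note that neither argument yields an exactly norm-preserving lift in $A(\bD_Q)$, a question the paper explicitly leaves open.
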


\begin{proof}
Let $R$ be as in the hypothesis. It is clear that $R$ is a well-defined completely contractive linear map, with $\ker(R) = I(\fV)$ (the fact that $F \vert_\fV \in A(\fV)$ when $F \in A(\bD_Q)$ follows from Proposition \ref{prop_hom_expansion_cesaro_convergent}). 
As this implies $\overline{R}$ is injective, it only remains to show that the quotient map $\overline{R}$ is surjective, and that it is completely isometric.

We achieve this by establishing that $\overline{R}^{-1}$ exists and is completely isometric on $\cA(\fV)$, which amounts to showing that every function in $\cA(\fV)$ has an extension in $\cA$ with (almost) the same norm. 
We shall use Proposition \ref{prop_extension_to_H_infty_from_variety} for this, but note that it is only valid for $H^{\infty}(\bD_Q)$. We must therefore consider the cases $\cA = A(\bD_Q)$ and $\cA = H^{\infty}(\bD_Q)$ separately.

\vspace{2mm}

\textbf{Case 1:} $\cA = H^{\infty}(\bD_Q)$.

\vspace{2mm}

\noindent In this case, the inverse map is defined directly via the norm-preserving extension from Proposition \ref{prop_extension_to_H_infty_from_variety}, and is clearly a completely isometric map. To be precise, given $f \in M_n(\cA(\fV))$, if $F \in M_n(H^{\infty}(\bD_Q))$ is such that
\[
f = F \rvert_{\fV} \text{ and } \|F\| = \|f\|,
\]
then the image $\ol{F} \in M_n(\cA/I(\fV))$ in the quotient satisfies
\[
\|\ol{F}\| = \|f\| \text{ and } \ol{R}(\ol{F}) = f.
\]

\vspace{2mm}

\textbf{Case 2:} $\cA = A(\bD_Q)$ and $\fV$ is homogeneous.

\vspace{2mm}

\noindent In order to prove that $\overline{R}$ is completely isometric, it suffices to show that the inverse map $\overline{R}^{-1}$ is a well-defined completely contractive map on the dense subalgebra $\bC \langle Z \rangle \vert_{\fV}$ of $\cA(\fV)$. Note that the injectivity of $\overline{R}$ implies that \[
\overline{R}^{-1}\left (p \vert_{\fV} \right ) = \overline{p}
\] is well-defined on $\bC \langle Z \rangle \vert_{\fV}$. All we need to show is that this map is completely contractive.

One may wish to proceed as in \textbf{Case 1}, and use Proposition \ref{prop_extension_to_H_infty_from_variety} to obtain $F \in H^\infty(\bD_Q)$ such that $F \vert_\fV = p \vert_\fV$ and $\|F\| = \|p \vert_\fV\|$. However, unless $F \in A(\bD_Q)$, it is unclear how to proceed with the argument. We fix this by replacing $p \vert_\fV$ with a better representative.

To this end, let $p \vert_\fV \in \bC \langle Z \rangle \vert_\fV$ with $p(Z) = \sum_\alpha c_\alpha Z^\alpha$, where all but finitely many $c_\alpha$ are $0$. Note that for every $0 < t < 1$, we have
\begin{align*}
\sup_{X \in \fV} \left \| p\left(\frac{X}{t}\right) - p(X) \right \| &= \sup_{X \in \fV} \left \| \sum_{\alpha \in \bF_d^+ \setminus \{0\}} c_\alpha \left( \frac{1}{t^{|\alpha|}} - 1 \right) X^\alpha \right \| \\
&\leq C \sum_{\alpha \in \bF_d^+ \setminus \{0\}} |c_\alpha| \left( \frac{1}{t^{|\alpha|}} - 1 \right),
\end{align*}
where $C = \sup_{|c_\alpha| \neq 0} \| Z^\alpha \vert_\fV \|$. For $0 < t < 1$, let us define $q^{(t)} \in \bC \langle Z \rangle$ as \[
q^{(t)}(X) = p\left( \frac{X}{t} \right), \, \forall X \in \bM^d.
\] The above observation gives us \[
\lim_{t \rightarrow 1^-} \left \| q^{(t)} \big \vert_\fV - p \vert_\fV \right \| = 0.
\] Therefore, given any arbitrary $\epsilon > 0$, we may choose $0 < t < 1$ such that \[
\left \| q^{(t)} \big \vert_\fV \right \| < (1 + \epsilon) \| p \vert_\fV \|.
\]

Let $q := q^{(t)}$ and note that $q_t = p $. 
By Proposition \ref{prop_extension_to_H_infty_from_variety}, there exists $G \in H^\infty(\bD_Q)$ such that $G \vert_\fV = q \vert_\fV$ and $\|G\| = \| q \vert_\fV \|$. Note that $G$ may or may not be equal to $q$. Let $G = \sum_k G_k$ be the homogeneous expansion of $G$, and note that the homogeneous expansion of $G_t$ is given by the norm convergent series $\sum_k t^k G_k$ (by Proposition \ref{prop_hom_expansion_cesaro_convergent}). In particular, this shows that $G_t \in A(\bD_Q)$. By the homogeneity of $\fV$ and the fact that $G \vert_\fV = q \vert_\fV$, it follows that \[
G_t(X) = G(tX) = q(tX) = q_t (X), \, \forall X \in \fV.
\] Therefore, $\overline{G_t} = \overline{p}$ since \[
G_t \vert_\fV = q_t \vert_\fV = p \rvert_\fV.
\] This gives us \[
\| \overline{p} \| \leq \| G_t \| \leq \| G \| = \left \| q \vert_\fV \right \| < (1 + \epsilon) \left \| p \vert_\fV \right \|.
\] As $\epsilon > 0$ was arbitrarily chosen, we get that $\|\overline{p}\| \leq \| p \vert_\fV \|$ and thus, $\overline{R}^{-1}\vert_{\bC \langle Z \rangle \vert_\fV}$ is contractive.

The above argument can clearly be repeated with matrix-valued polynomials and thus, $\overline{R}^{-1} \vert_{\bC \langle Z \rangle \vert_\fV}$ is completely contractive. This completes the proof. \qedhere

\end{proof}

\begin{remark}

The above proof does not show that every $f \in A(\fV)$ can be extended to some $F \in A(\bD_Q)$ with the same norm. Rather, the extension operator (modulo $I(\fV)$) has the norm-preserving property when $\fV$ is homogeneous.
We leave open the question, for what kind of varieties is it true that every $f \in A(\fV)$ has a norm-preserving extension $F \in A(\bD_Q)$? Also, when can we guarantee that a polynomial $p\vert_{\fV} \in \bC \langle Z \rangle \vert_\fV$ has a norm-preserving extension to $\bD_Q$ via some polynomial $P \in \bC \langle Z \rangle$?

\end{remark}

Theorem \ref{thm_A(V)_is_quotient_of_A(D_d)} combined with Theorem \ref{thm_nullstellensatz_for_homogenous_poyhedra} gives us the following immediate corollary.

\begin{corollary} \label{corollary_isomorphism_subalgebras_of_A}

Let $\cJ \triangleleft \cA$ be a $\tau$-closed homogeneous ideal. The map $\overline{R} : \cA/\cJ \rightarrow \cA(V(\cJ))$ given by
\[
\overline{R}\left(\overline{F}\right) = F\rvert_{V(\cJ)}, \, \forall \overline{F} \in \cA/\cJ
\]
is then a completely isometric isomorphism of algebras.

\end{corollary}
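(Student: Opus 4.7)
The plan is to assemble the corollary directly from Theorem \ref{thm_A(V)_is_quotient_of_A(D_d)} and Theorem \ref{thm_nullstellensatz_for_homogenous_poyhedra}, with a brief appeal to Proposition \ref{prop_homogeneous_ideals_vs_varieties} to ensure the hypotheses of the former theorem are met.

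First I would set $\fV := V(\cJ)$. Since $\cJ$ is a homogeneous ideal in $\cA$, Proposition \ref{prop_homogeneous_ideals_vs_varieties} shows that $\fV$ is a homogeneous subvariety of $\bD_Q$. This puts us in either of the two scenarios covered by Theorem \ref{thm_A(V)_is_quotient_of_A(D_d)}: either $\cA = H^\infty(\bD_Q)$ (in which case no homogeneity assumption on $\fV$ is needed), or $\cA = A(\bD_Q)$ (in which case $\fV$ being homogeneous is precisely what is required). In either case, that theorem yields that the restriction map induces a completely isometric isomorphism
\[
\overline{R}_\fV : \cA/I(\fV) \longrightarrow \cA(\fV), \qquad \overline{R}_\fV(\overline{F}) = F\rvert_{\fV}.
\]

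Next I would identify $I(\fV)$ with $\cJ$. Since $\cJ$ is $\tau$-closed and homogeneous, the homogeneous Nullstellensatz (Theorem \ref{thm_nullstellensatz_for_homogenous_poyhedra}(2)) gives
\[
I(V(\cJ)) \;=\; \overline{\cJ}^{\tau} \;=\; \cJ.
\]
Thus $I(\fV) = \cJ$, so the quotient $\cA/I(\fV)$ appearing in the previous step coincides with $\cA/\cJ$, and $\overline{R}_\fV$ is precisely the map $\overline{R}$ of the corollary. This completes the proof.

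I do not anticipate any genuine obstacle: the corollary is a clean bookkeeping of previously established results. The only subtle point to mention is the choice of using the homogeneous Nullstellensatz in the form $I(V(\cJ)) = \overline{\cJ}^{\tau}$ rather than $I(V(\cJ)) = \cJ$; the $\tau$-closedness of $\cJ$ is what collapses these, and it is the hypothesis that makes the identification $I(\fV) = \cJ$ automatic.
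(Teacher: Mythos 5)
Your proposal is correct and is exactly the argument the paper intends: the corollary is stated as an immediate consequence of Theorem \ref{thm_A(V)_is_quotient_of_A(D_d)} combined with the homogeneous Nullstellensatz (Theorem \ref{thm_nullstellensatz_for_homogenous_poyhedra}), with the $\tau$-closedness of $\cJ$ giving $I(V(\cJ)) = \overline{\cJ}^{\tau} = \cJ$ and Proposition \ref{prop_homogeneous_ideals_vs_varieties} supplying the homogeneity of $V(\cJ)$ needed in the $A(\bD_Q)$ case. Nothing is missing.
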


Theorem \ref{thm_A(V)_is_quotient_of_A(D_d)} and Corollary \ref{corollary_isomorphism_subalgebras_of_A} combine to show that there is a one-to-one correspondence between $\tau$-closed algebras over homogeneous nc varieties $\fV \subset \bD_Q$, and quotients of $\cA$ with $\tau$-closed homogeneous ideals $\cJ \triangleleft \cA$: \[
\cA(\fV) \longleftrightarrow \cA/\cJ.
\]

Lastly, we mention an interesting byproduct of Proposition \ref{prop_extension_to_H_infty_from_variety} and Theorem \ref{thm_A(V)_is_quotient_of_A(D_d)}, which will be useful in Section \ref{subsection_unifrom_continuity}.

\begin{corollary} \label{corollary_f_r_on_a_homogeneous_subvariety]}
Let $\fV \subseteq \bD_Q$ be a homogeneous subvariety, and let $f \in H^\infty(\fV)$. For any $r < 1$, define $f_r \in H^\infty(\fV)$ to be the nc function 
\[
f_r(X) = f(rX), \, \forall X \in \fV.
\] 
Then, $f_r \in A(\fV)$ and $\|f_r\| \leq \| f \|$ for each $r < 1$.
\end{corollary}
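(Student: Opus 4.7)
The norm bound is the easy half: since $\fV$ is homogeneous, $r\fV \subseteq \fV$ for every $r < 1$, so $f_r$ is a well-defined bounded nc function on $\fV$ and
\[
\|f_r\| = \sup_{X \in \fV} \|f(rX)\| \leq \sup_{Y \in \fV} \|f(Y)\| = \|f\|.
\]
The content of the statement is the membership $f_r \in A(\fV)$, which is the algebra of uniform limits of polynomials --- a priori strictly smaller than $H^\infty(\fV)$.

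My plan is to reduce the question to the same statement on the ambient ball $\bD_Q$ by extending $f$. Applying Proposition \ref{prop_extension_to_H_infty_from_variety} with $n = 1$, I pick $F \in H^\infty(\bD_Q)$ with $F\rvert_{\fV} = f$ and $\|F\|_\infty = \|f\|$. Since $\fV$ is homogeneous and $r < 1$, the function $F_r(X) := F(rX)$ on $\bD_Q$ restricts on $\fV$ to $f_r$. It therefore suffices to show that $F_r \in A(\bD_Q)$, because then Theorem \ref{thm_A(V)_is_quotient_of_A(D_d)} (the restriction map $R$ sends $A(\bD_Q)$ into $A(\fV)$) yields $f_r = R(F_r) \in A(\fV)$.

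To show $F_r \in A(\bD_Q)$, I would use the homogeneous expansion $F = \sum_{k \geq 0} F_k$ together with Proposition \ref{prop_hom_expansion_cesaro_convergent}. Each $F_k = P_k(F)$ is a homogeneous free nc polynomial of degree $k$ (so $F_k \in A(\bD_Q)$), and because $P_k$ is a complete contraction, $\|F_k\|_\infty \leq \|F\|_\infty$. Hence the series $\sum_{k \geq 0} r^k F_k$ is absolutely convergent in the sup-norm, since
\[
\sum_{k \geq 0} r^k \|F_k\|_\infty \leq \|F\|_\infty \sum_{k \geq 0} r^k = \frac{\|F\|_\infty}{1-r} < \infty.
\]
Its partial sums lie in $\bC\langle Z\rangle \subset A(\bD_Q)$, so the norm-limit lies in the norm-closed algebra $A(\bD_Q)$. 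Finally, from the global power-series representation of $F$ on $\bD_Q$ (valid on $r\bD_Q$, as recalled in Section \ref{subsection_bounded_nc_maps_on_D_Q}), the pointwise value of this norm-limit at $X$ agrees with $F(rX) = F_r(X)$, so the norm-limit is precisely $F_r$. Thus $F_r \in A(\bD_Q)$.

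The only subtle step is the identification of the norm-limit of $\sum_k r^k F_k$ with $F_r$, which is not a direct application of Proposition \ref{prop_hom_expansion_cesaro_convergent}(2)--(3) (that statement gives Ces\`aro convergence in the bounded-pointwise or norm topology for the series of $F$, not for the series of $F_r$). However, once absolute norm convergence of $\sum_k r^k F_k$ is in hand, the identification at each point $X \in \bD_Q$ reduces to the already-known pointwise (in fact locally uniform) power-series identity $F(rX) = \sum_\alpha c_\alpha r^{|\alpha|} X^\alpha = \sum_k r^k F_k(X)$. This completes the proof; combining with the norm estimate above yields the corollary.
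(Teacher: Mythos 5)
Your proposal is correct and follows essentially the same route as the paper: extend $f$ to $F \in H^\infty(\bD_Q)$ with the same norm via Proposition \ref{prop_extension_to_H_infty_from_variety}, show $F_r \in A(\bD_Q)$ through the norm-convergent series $\sum_k r^k F_k$, and restrict back to $\fV$. The extra care you take in justifying the norm convergence of $\sum_k r^k F_k$ and its identification with $F_r$ simply fills in a step the paper had already recorded in Section \ref{subsection_varieties_and_homogeneous_ideals}.
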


\begin{proof}
	
Let $\fV \subseteq \bD_Q$ and $f \in H^\infty(\fV)$ be as in the hypothesis. By Proposition \ref{prop_extension_to_H_infty_from_variety}, there exists $F \in H^\infty(\bD_Q)$ with
\[
F \vert_\fV = f \text{ and } \|F\| = \|f\|.
\]
Let $F = \sum_k F_k$ be the homogeneous expansion of $F$. Fix $r < 1$, and define $f_r$ as above. Since $\fV$ is homogeneous and $F\vert_\fV = f$, we get \[
F_r(X) = F(rX) = f(rX) = f_r(X), \, \forall X \in \fV.
\] Therefore, $F_r \vert_\fV = f_r$. Furthermore, since the homogeneous expansion of $F_r$ is given by the norm convergent series $\sum_k r^k F_k$, it follows that $F_r \in A(\bD_Q)$.
	
Now, let $R : A(\bD_Q) \to A(\fV)$ be the restriction map as in Theorem \ref{thm_A(V)_is_quotient_of_A(D_d)} and note that \[
R(F_r) = F_r \vert_\fV = f_r.
\] Since $R$ is a complete contraction, we get \[
\|f_r\| \leq \|F_r\| \leq \|F\| = \|f\|.
\] Furthermore, the norm convergent homogeneous expansion of $F_r$ under the map $R$ provides us with a norm convergent series $\sum_k r^k F_k \vert_\fV$ which converges to $f_r$. Thus, $f_r \in A(\fV)$. \qedhere
	
\end{proof}

\subsection{\bf Uniform continuity} \label{subsection_unifrom_continuity}

We end this section by establishing an alternate description of functions in $A(\fV)$ when $\fV \subseteq \bD_Q$ is a homogeneous subvariety. In particular, we show that functions in $A(\fV)$ are precisely those that extend to $\overline{\fV}$ uniformly. Let us start by detailing the notion of \emph{uniform continuity} for nc functions.

Given $d, l \in \bN$ and a nc set $\fX \subseteq \bM^d$, a nc function $F : \fX \rightarrow \bM^l$ is said to be \emph{uniformly continuous} if for each $\epsilon > 0$, there exists $\delta > 0$ for which 
\[
\|F(X) - F(Y)\|_\infty < \epsilon, \, \forall \, X, Y \in \fX(n), n \in \bN \text{ s.t. } \|X - Y\|_{\infty} < \delta.
\]

Let $U(\bD_Q)$ denote the set of bounded uniformly continuous nc functions on ${\bD_Q}$ i.e., 
\[
U(\bD_Q) = \{ F \in H^\infty(\bD_Q) \, : \, F \text{ is uniformly continuous on } {\bD_Q} \}.
\] 
Since $Q$ is linear and injective, it follows that $\bD_Q$ is a bounded domain. It is then easy to verify that $U(\bD_Q)$ is a norm-closed subalgebra of $H^\infty(\bD_Q)$ that contains the free nc polynomials (see \cite[Lemma 9.1 and Corollary 9.2]{SSS18} for more details). This shows that $A(\bD_Q) \subseteq U(\bD_Q)$. 
Every uniformly continuous function on $\bD_Q$ extends to a uniformly continuous function on its closure $\ol{\bD_Q}$ and therefore, we can also think of $U(\bD_Q)$ as an algebra of functions on $\ol{\bD_Q}$ if we find it to be useful. 

Let us similarly define $U(\fV)$ for a subvariety $\fV \subseteq \bD_Q$ to be the set of $H^\infty(\fV)$ functions that are uniformly continuous on $\fV$. As before, it is clear that $U(\fV)$ is a norm-closed subalgebra of $H^\infty(\fV)$ which contains the polynomials (restricted to $\fV$). We therefore have the natural inclusion $A(\fV) \subseteq U(\fV)$.

Our goal is to show that not only does the reverse inclusion hold for the above, but we may relax the uniform continuity assumption to something weaker, namely \emph{radial uniform continuity}. With notation as before, a nc function $F : \fX \rightarrow \bM^l$ is said to be {\em radially uniformly continuous} if for each $\epsilon > 0$, there exists $\delta > 0$ for which \[
\| F(rX) - F(sX) \|_\infty < \epsilon, \, \forall \, X \in \fX(n), \, n \in \bN \text{ and } r,s \in (0,1) \text{ s.t. } |r -s| < \delta.
\]

Clearly, given a bounded homogeneous nc set $\Omega$, if $F$ is uniformly continuous on $\fX$ then it is also radially uniformly continuous on $\fX$. Moreover, every radially uniformly continuous function on $\fX$ extends to a radially uniformly continuous function up to $\ol{\fX}$. We are now ready to prove the reverse inclusion mentioned earlier.

\begin{proposition} \label{proposition_uniform_continuity_of_A(V)_functions}
Let $\fV \subseteq \bD_Q$ be a homogeneous subvariety. Then, the following are equivalent for $f \in H^\infty(\fV)$:
\begin{enumerate}
\item[$(1)$] $f$ is radially uniformly continuous.
\item[$(2)$] $f \in A(\fV)$.
\item[$(3)$] $f \in U(\fV)$.
\end{enumerate}
\end{proposition}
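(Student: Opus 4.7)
My plan is to establish the cycle $(2) \Rightarrow (3) \Rightarrow (1) \Rightarrow (2)$, where the first two implications are short and the last is the substantive step that will draw on the preceding corollary about $f_r$.

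For $(2) \Rightarrow (3)$, I observe that any free nc polynomial $p$ is uniformly continuous on the bounded set $\fV$, since $p$ is a finite sum of monomials $c_\alpha Z^\alpha$ and multiplication is jointly uniformly continuous on norm-bounded sets; thus $\bC\langle Z\rangle \vert_\fV \subseteq U(\fV)$. Because $U(\fV)$ is a norm-closed subalgebra of $H^\infty(\fV)$, taking norm-closures yields $A(\fV) \subseteq U(\fV)$, which is $(2)\Rightarrow (3)$.

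For $(3) \Rightarrow (1)$, fix $f \in U(\fV)$ and $\epsilon > 0$. Since $\fV \subseteq \bD_Q$ is bounded, let $M := \sup_{X \in \fV} \|X\|_\infty < \infty$. Choosing $\eta > 0$ via uniform continuity so that $\|X - Y\|_\infty < \eta$ implies $\|f(X) - f(Y)\| < \epsilon$, I set $\delta := \eta/(M+1)$; then for any $X \in \fV(n)$ and $r,s \in (0,1)$ with $|r-s| < \delta$, $\fV$ homogeneous gives $rX, sX \in \fV$ and $\|rX - sX\|_\infty \leq |r-s|\,\|X\|_\infty < \eta$, so $\|f(rX) - f(sX)\| < \epsilon$, establishing radial uniform continuity.

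The main implication is $(1) \Rightarrow (2)$. Suppose $f \in H^\infty(\fV)$ is radially uniformly continuous. For $r \in (0,1)$ let $f_r(X) = f(rX)$, which by the preceding corollary lies in $A(\fV)$ with $\|f_r\| \leq \|f\|$. Radial uniform continuity applied uniformly in $X \in \fV$ tells me that the net $\{f_r\}_{r \in (0,1)}$ is Cauchy in the sup-norm as $r \to 1^-$: for each $\epsilon > 0$, there exists $\delta > 0$ with $\|f_r - f_s\|_\infty < \epsilon$ whenever $|r-s| < \delta$. Hence $f_r \to g$ in norm for some $g \in H^\infty(\fV)$, and since $A(\fV)$ is norm-closed, $g \in A(\fV)$. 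To identify $g = f$ I use that bounded nc functions are automatically continuous (\cite[Chapter 7]{KVV14}): for each fixed $X \in \fV$, the map $r \mapsto f(rX)$ is continuous at $r=1$ along $r \in (0,1)$ (note that $rX \to X$ in the norm of the appropriate matrix level and $rX \in \fV$ for all $r \in (0,1]$), so $f_r(X) \to f(X)$ pointwise. Combined with the already-established norm convergence to $g$, pointwise evaluation yields $g = f$, completing the proof that $f \in A(\fV)$.

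The main obstacle I expect is the passage to the limit $r \to 1^-$: the definition of radial uniform continuity only compares $f_r$ and $f_s$ for $r,s$ strictly less than $1$, so the Cauchy argument produces a sup-norm limit without a priori equal to $f$. The fix is precisely pointwise continuity of $f$ at each point $X \in \fV$ coming from the general regularity of bounded nc functions, which is what allows us to identify the abstract limit with $f$ itself. Once this identification is in place, the cycle closes and all three conditions coincide.
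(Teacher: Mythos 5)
Your proof is correct and follows the same overall strategy as the paper: the implications $(2)\Rightarrow(3)\Rightarrow(1)$ are handled exactly as in the paper's preceding discussion, and for $(1)\Rightarrow(2)$ both arguments reduce, via Corollary \ref{corollary_f_r_on_a_homogeneous_subvariety]}, to showing $f_r \to f$ in the sup-norm. The one place you diverge is in how the limit at $r=1$ is closed. The paper compares $f_r$ with $f$ directly: for a fixed $r$ it picks $X$ nearly attaining $\|f_r - f\|$, rescales it to $Y=(1+s)X \in \fV$, and writes both $f(rX)$ and $f(X)$ as values of $f$ at interior radial points $\tfrac{r}{1+s}Y$ and $\tfrac{1}{1+s}Y$ of the same ray, so that radial uniform continuity applies verbatim. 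You instead observe that $\{f_r\}$ is uniformly Cauchy, obtain a norm limit $g \in A(\fV)$, and identify $g=f$ by pointwise continuity along rays. Both are valid; the paper's rescaling trick avoids the identification step entirely, while your route needs the continuity of $f$ at each $X\in\fV$ along the segment $r\mapsto rX$. On that last point, the citation to \cite[Chapter 7]{KVV14} is slightly off target, since those continuity results concern nc functions on (uniformly) open nc sets and $\fV$ is not open; the clean justification is to extend $f$ to some $F \in H^\infty(\bD_Q)$ via Proposition \ref{prop_extension_to_H_infty_from_variety} (or Theorem \ref{theorem_BMV_interpolation_full_sets}) and use the continuity of $F$ on each level of $\bD_Q$. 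With that small repair the argument is complete.
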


\begin{proof}
By the discussion above, it is clear that $(2) \Rightarrow (3) \Rightarrow (1)$. Thus we only need to show that $(1) \Rightarrow (2)$ to complete the proof. To this end, let $f \in H^\infty(\fV)$ be radially uniformly continuous. By Corollary \ref{corollary_f_r_on_a_homogeneous_subvariety]}, we know that $f_r \in A(\fV)$ for each $r < 1$. Therefore it suffices to show that $f_r \rightarrow f$ in the sup-norm as $r \rightarrow 1$. Equivalently, we shall show that for each $\epsilon > 0$ there exists $\delta > 0$ such that $\| f_r - f \| < \epsilon$ whenever $r \in (1-\delta, 1)$.
	
First, let us fix an arbitrary $\epsilon > 0$, and let $\delta > 0$ be as in the definition of radial uniform continuity for $f$ that corresponds to $\epsilon/2$. Note that for any given (but fixed) $r \in (1-\delta, 1)$, there exists $X \in \fV$ such that \[
\| f_r - f \| \leq \|f(rX) - f(X)\| + \epsilon/2.
\] Let $0 < s < \delta$ be small enough so that $Y := (1 + s) X \in \fV$ and note that $1/(1+s) \in (1- \delta, 1)$. It is worth pointing out that such a $Y$ exists because $\fV$ is homogeneous. Also, \[
f(rX) = f\left(\frac{r}{1+s} Y\right) \text{ and } f(X) = f\left( \frac{1}{1+s} Y\right).
\] Using the fact that \[
\left\lvert \frac{r}{1+s} - \frac{1}{1+s} \right\rvert < \delta,
\] and that $\delta > 0$ was chosen to be as in the definition of radial uniform continuity for $f$ corresponding to $\epsilon/2$, we obtain for any given $r \in (1 - \delta, 1)$ that \[
\| f_r - f \| \leq \left\| f\left(\frac{r}{1+s} Y\right) - f\left( \frac{1}{1+s} Y\right) \right\| + \epsilon/2 < \epsilon.
\] Thus, $f \in A(\fV)$ as required, and the proof is complete.
\end{proof}


\section{\bf Completely Contractive Representations} \label{section_representations}

Let $Q : \Omega \to \cL(\cR,\cS)_{nc}$ be a linear operator-valued nc map $Q(Z) = \sum_{j = 1}^d Q_jZ_j$ with linearly independent coefficients $Q_j \in \cL(\cR,\cS)$, and let $\bD_Q$ be the corresponding nc operator ball. For $(\cA, \tau)$ as in Section \ref{subsection_varieties_and_homogeneous_ideals}, let us write $\Rep_n(\cA)$ for the space of bounded representations of $\cA$ on $\bC^n$, and $\Rep_n^{cc}(\cA)$ for the space of completely contractive representations of $\cA$ on $\bC^n$. Note that for $\Phi \in \Rep^{cc}_n(\cA)$,
\[
\pi(\Phi) := \Phi(Z) = (\Phi(Z_1), \ldots, \Phi(Z_d))\in M_n^d
\]
is an image of $Z$ under a completely contractive representation.
Note that 
\begin{equation} \label{equation_representations_lie_in_closure}
\| Q(\Phi(Z)) \| =  \left\| \sum_{j = 1}^d Q_j \otimes \Phi(Z_j) \right \| = \left \| \left [I_{\cL(\cR,\cS)} \otimes \Phi\right ]\sum_{j = 1}^d (Q_j \otimes Z_j) \right \|.
\end{equation}
Since $\Phi$ is completely contractive, by \cite[Proposition 2.1.1]{PisierBook} we know that
\begin{equation} \label{equation_pisier_proposition}
\left \| I_{\cL(\cR,\cS)} \otimes \Phi  \right \| \leq \| \Phi \|_{cb} \leq 1.
\end{equation}
Combining (\ref{equation_representations_lie_in_closure}) and (\ref{equation_pisier_proposition}) we get 
\[
\| Q(\Phi(Z)) \| \leq \left \| I_{\cL(\cR,\cS)} \otimes \Phi \right \| \sup_{X \in \bD_Q} \| Q(X) \| \leq 1,
\]
and therefore, it follows that $\pi (\Phi) \in \ol{\bD_Q(n)}$. On the other hand, every $X \in \ol{\bD_Q(n)}$ gives rise to an evaluation representation $\Phi_X$ of $A(\bD_Q)$ on $\bC^n$ given by 
\begin{equation} \label{eq:evaluation functional}
\Phi_X : F \mapsto F(X), 
\end{equation} and clearly $\pi(\Phi_X) = X$. 
Since every continuous representation of $A(\bD_Q)$ is uniquely determined by its values on the generators, we conclude that $\pi \colon \Rep^{cc}_n(A(\bD_Q)) \to \ol{\bD_Q(n)}$ is bijective, whence it is a homeomorphism. The projection map $\pi$ extends to a nc map between the nc sets
\[
\Rep^{cc}(A(\bD_Q)) = \sqcup_{n=1}^\infty \Rep^{cc}_n(A(\bD_Q))
\]
and
\[
\ol{
	\bD_Q} = \sqcup_{n=1}^\infty \ol{\bD_Q(n)}.
\]

Similarly, every $X \in {\bD_Q}$ gives rise to a bounded-pointwise continuous evaluation representation $\Phi_X$ on $H^\infty(\bD_Q)$ given by 
\eqref{eq:evaluation functional} and satisfying $\pi(\Phi_X) = X$. 
On $\Rep^{cc}_n(H^\infty(\bD_Q))$ there is a natural pointwise topology. 
Since $\Rep^{cc}_n(H^\infty(\bD_Q))$ is compact in this topology, the image of $\pi \colon \Rep^{cc}_n(H^\infty(\bD_Q)) \to M_n^d$ is equal to $\ol{\bD_Q(n)}$. 
We summarize what we obtained in the following result. 

\begin{proposition}\label{prop:repspaceDQ}
There exists a continuous surjection $\pi : \Rep^{cc}(\cA, \tau) \to \ol{\bD_Q}$ determined by $\pi(\Phi) = \Phi(Z)$. 
If $\cA = A(\bD_Q)$ then $\pi$ is a homeomorphism. 
If $\cA = H^\infty(\bD_Q)$ then the restriction of $\pi$ to the bounded-pointwise continuous representations in $\pi^{-1}(\bD_Q)$ is a homeomorphism onto $\bD_Q$. 
\end{proposition}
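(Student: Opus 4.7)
The plan is to organize the observations scattered across the four paragraphs immediately preceding the statement into a clean proof. I would equip each $\Rep^{cc}_n(\cA,\tau)$ with the topology of pointwise convergence in operator norm on $M_n$, so that every evaluation $\Phi \mapsto \Phi(F)$ is continuous for $F \in \cA$, and so that $\Rep^{cc}_n(\cA,\tau)$ is compact Hausdorff (as a closed subspace of $\prod_{F \in \cA} \overline{B(0,\|F\|)} \subseteq \prod_F M_n$). Continuity of $\pi$ is then immediate from its componentwise definition $\pi(\Phi) = (\Phi(Z_1),\dots,\Phi(Z_d))$.

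For well-definedness of $\pi$ into $\overline{\bD_Q}$, I would invoke the displayed computation already appearing in the text between equations~(5.1) and~(5.2), which uses $\|I_{\cL(\cR,\cS)} \otimes \Phi\| \leq \|\Phi\|_{cb} \leq 1$ (via \cite[Proposition~2.1.1]{PisierBook}) to bound $\|Q(\Phi(Z))\| \leq 1$. For surjectivity, evaluation $\Phi_X(F) := F(X)$ at any $X \in \bD_Q$ is a completely contractive, $\tau$-continuous homomorphism satisfying $\pi(\Phi_X) = X$. For $X \in \overline{\bD_Q} \setminus \bD_Q$, I would pick a net $X_\lambda \to X$ with $X_\lambda \in \bD_Q$ and, by compactness of $\Rep^{cc}_n$, extract a cluster point $\Phi$ of $\{\Phi_{X_\lambda}\}$; continuity of $\pi$ then forces $\pi(\Phi) = X$.

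For the homeomorphism when $\cA = A(\bD_Q)$, injectivity of $\pi$ follows because every bounded representation is norm-continuous and polynomials are norm-dense in $A(\bD_Q)$, so $\Phi$ is determined by its values $(\Phi(Z_1),\dots,\Phi(Z_d)) = \pi(\Phi)$; a continuous bijection between a compact and a Hausdorff space is a homeomorphism. For the $H^\infty(\bD_Q)$ statement restricted to the bounded-pointwise continuous representations in $\pi^{-1}(\bD_Q)$, the same injectivity follows by using Proposition~\ref{prop_hom_expansion_cesaro_convergent}(2) to approximate each $F \in H^\infty(\bD_Q)$ in the $\tau$-topology by its Ces\`aro sums (which are polynomials), on which any $\tau$-continuous $\Phi$ with $\pi(\Phi) = X$ must agree with $\Phi_X$; continuity of the inverse $X \mapsto \Phi_X$ is immediate because nc functions in $H^\infty(\bD_Q)$ are continuous on $\bD_Q$.

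The proof is essentially bookkeeping once the pointwise topology on $\Rep^{cc}_n$ is fixed; the main points of care are the compactness/cluster-point argument for surjectivity onto $\partial \bD_Q$, and the Ces\`aro-approximation argument used for the $H^\infty$ injectivity. Neither step presents a genuine obstacle.
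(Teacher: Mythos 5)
Your argument is correct and follows essentially the same route as the paper: the same norm estimate $\|Q(\Phi(Z))\|\leq 1$ for well-definedness, evaluation representations for surjectivity, density of polynomials (resp.\ Ces\`aro sums via Proposition \ref{prop_hom_expansion_cesaro_convergent}) for injectivity, and compactness of $\Rep^{cc}_n$ in the pointwise topology to upgrade the bijection to a homeomorphism. The only cosmetic difference is that for $\cA = A(\bD_Q)$ the paper obtains surjectivity onto $\partial \bD_Q$ directly, by observing that the evaluation $\Phi_X$ at any $X \in \ol{\bD_Q}$ is already a well-defined completely contractive representation of $A(\bD_Q)$ (elements of $A(\bD_Q)$ extend continuously to the closed ball), whereas you reach boundary points by a cluster-point argument; both are valid.
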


\begin{remark} \label{remark_boundary_representations_for_H_infty}
We are not claiming that the bounded-pointwise continuous representations are precisely those that lie over the open ball $\bD_Q$. 
For example, if $X$ is a jointly nilpotent tuple in the boundary $\partial \bD_Q$, then Proposition \ref{prop_hom_expansion_cesaro_convergent} (1) tells us that $X$ gives rise to a bounded-pointwise continuous representation.

When $\bD_Q = \fB_d$, the bounded-pointwise continuous representations lying over $\partial \fB_d$ are precisely the \emph{pure tuples} (see \cite[Remark 6.2]{SSS18}). 
For $X \in \fB_d$, we know that $\Phi_X$ is the unique element in $\pi^{-1}(X)$ \cite[Theorem 3.2]{DavPitts98b}. 
It is interesting to understand the picture for general $\bD_Q$, however we do not have that answer as of yet.
\end{remark}

%
%

We now turn to study the representations of $\cA(\fV)$ where $\fV \subseteq \bD_Q$ is a subvariety. 

\begin{proposition}\label{prop:rep_cont}
For a homogeneous subvariety $\fV \subseteq \bD_Q$, $\pi : \Rep^{cc}(A(\fV)) \to \ol{\fV}$ given by $\pi(\Phi) = \Phi(Z)$ is a homeomorphism of $\Rep^{cc}_n(A(\fV))$ onto $\ol{\fV(n)}$, $n \in \bN$, with inverse $X \mapsto \Phi_X$. 
\end{proposition}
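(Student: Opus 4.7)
The plan is to leverage Theorem \ref{thm_A(V)_is_quotient_of_A(D_d)}, which (since $\fV$ is homogeneous) realizes $A(\fV)$ as the completely isometric quotient $A(\bD_Q)/I(\fV)$ via restriction. Every $\Phi \in \Rep^{cc}_n(A(\fV))$ thus lifts to a cc representation $\tilde{\Phi} := \Phi \circ R$ of $A(\bD_Q)$ that annihilates $I(\fV)$, and conversely. By Proposition \ref{prop:repspaceDQ}, $\tilde{\Phi}$ is realized as evaluation at the point $X := \tilde{\Phi}(Z) = \Phi(Z) \in \ol{\bD_Q(n)}$, where $F(X)$ for $F \in A(\bD_Q)$ is the limit of $p_k(X)$ along any sequence of polynomials $p_k \to F$ in norm. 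Consequently $F(X) = 0$ for every $F \in I(\fV)$, and the proof reduces to identifying the set
\[
\{X \in \ol{\bD_Q(n)} : F(X) = 0 \foral F \in I(\fV)\}
\]
with $\ol{\fV(n)}$.

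The inclusion $\ol{\fV(n)} \subseteq \{X \in \ol{\bD_Q(n)} : F(X) = 0 \foral F \in I(\fV)\}$ is immediate from the continuous extension of $A(\bD_Q)$ functions to $\ol{\bD_Q}$. For the reverse inclusion, which is the crux of the argument, I would argue by radial shrinkage. Given $X$ in the above set and any $r < 1$, consider $F_r(Y) := F(rY)$. By Corollary \ref{corollary_f_r_on_a_homogeneous_subvariety]} applied to $\bD_Q$ itself (alternatively, directly from the norm-convergent expansion $F_r = \sum_k r^k F_k$ of Proposition \ref{prop_hom_expansion_cesaro_convergent}), $F_r \in A(\bD_Q)$ with $\|F_r\| \leq \|F\|$. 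Because $\fV$ is homogeneous, $F|_\fV = 0$ forces $F_r|_\fV = 0$, so $F_r \in I(\fV)$. Since $rX \in \bD_Q$ (homogeneity of $\bD_Q$), $F(rX)$ is an ordinary point-evaluation, and
\[
F(rX) = F_r(X) = \tilde{\Phi}(F_r) = 0 \foral F \in I(\fV).
\]
The trivial Nullstellensatz (Proposition \ref{prop:trivial nullstellensatz}) yields $rX \in V(I(\fV)) = \fV$; letting $r \to 1^-$ places $X$ in $\ol{\fV(n)}$.

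For the inverse map and topology, every $X \in \ol{\fV(n)}$ defines a cc representation $\Phi_X$ of $A(\fV)$ via $\Phi_X(f) := \lim_k p_k(X)$ for polynomials $p_k$ with $p_k|_\fV \to f$ in $A(\fV)$. This is well-defined and cc because $\|p_k(X) - p_\ell(X)\| \leq \sup_{Y \in \fV}\|(p_k - p_\ell)(Y)\|$, which follows from $X \in \ol{\fV(n)}$ and continuity of polynomials. It is then immediate that $\pi(\Phi_X) = X$, while $\Phi_{\pi(\Phi)} = \Phi$ follows because any cc representation is determined by its values on $Z_1, \ldots, Z_d$ together with continuity on the dense subalgebra of polynomial restrictions. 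Finally, $\pi$ is continuous, being evaluation at the coordinate functions in the pointwise topology on $\Rep^{cc}_n(A(\fV))$; as this space is compact in that topology (as a pointwise-closed, norm-bounded subset of $M_n^{A(\fV)}$) and $\ol{\fV(n)}$ is Hausdorff, the continuous bijection $\pi$ is automatically a homeomorphism.

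The main obstacle is the second paragraph: converting the algebraic condition $\tilde{\Phi}|_{I(\fV)} = 0$ into the geometric statement $X \in \ol{\fV(n)}$ rather than merely $X \in \ol{\bD_Q(n)}$. This is precisely where the homogeneity of $\fV$ is essential (to ensure $F_r \in I(\fV)$ whenever $F \in I(\fV)$), used in tandem with the trivial Nullstellensatz to promote $F(rX) = 0$ for all $F \in I(\fV)$ to $rX \in \fV$.
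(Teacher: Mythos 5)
Your proposal is correct and follows the same route as the paper: lift $\Phi$ through the quotient identification $A(\fV) \cong A(\bD_Q)/I(\fV)$ of Theorem \ref{thm_A(V)_is_quotient_of_A(D_d)}, identify the lift as a point evaluation at $X = \Phi(Z) \in \ol{\bD_Q(n)}$ via Proposition \ref{prop:repspaceDQ}, and invoke the trivial Nullstellensatz to land in $\ol{\fV}$. Your radial shrinkage argument (showing $rX \in V(I(\fV)) = \fV$ for all $r<1$ using homogeneity of $\fV$, then letting $r \to 1^-$) is a correct and welcome filling-in of a step the paper's terse proof elides, since the trivial Nullstellensatz as stated only identifies the zero set of $I(\fV)$ inside the \emph{open} ball.
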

\begin{proof}
By Theorem \ref{thm_A(V)_is_quotient_of_A(D_d)}, we know that
\[
A(\fV) \cong A(\bD_Q)/ I(\fV).
\]
Every $\Phi \in \Rep^{cc}(A(\fV))$ lifts to a representation of $A(\bD_Q)$ that annihilates $I(\fV)$, and therefore, by the trivial Nullstellensatz (Proposition \ref{prop:trivial nullstellensatz}) corresponds to a unique $X \in \ol{\fV}$.

Conversely, clearly every $X \in \ol{\fV}$ gives rise to an evaluation representation $\Phi_X$ on $A(\fV)$. 
\end{proof}

For $H^\infty(\fV)$ we have the following analogue. 
We do not require that $\fV$ be homogeneous. 

\begin{proposition}\label{prop:bpcont_reps}
If $\fV \subseteq \bD_Q$ is a subvariety, then $\pi : \Rep^{cc}(H^\infty(\fV)) \to \ol{\bD_Q}$ given by $\pi(\Phi) = \Phi(Z)$ is a continuous surjection that restricts to a homeomorphism between the bounded-pointwise continuous $n$-dimensional representations in $\pi^{-1}(\bD_Q)$ and $\fV(n)$. 
\end{proposition}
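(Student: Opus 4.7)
My plan is to adapt the proof of Proposition \ref{prop:rep_cont}, using Case 1 of Theorem \ref{thm_A(V)_is_quotient_of_A(D_d)}, which identifies $H^\infty(\fV) \cong H^\infty(\bD_Q)/I(\fV)$ completely isometrically with no homogeneity assumption on $\fV$. Under this identification, every $\Phi \in \Rep^{cc}(H^\infty(\fV))$ lifts uniquely to a cc representation $\tilde\Phi$ of $H^\infty(\bD_Q)$ that annihilates $I(\fV)$, and this correspondence is a homeomorphism in the pointwise topologies. Applying Proposition \ref{prop:repspaceDQ} to $\tilde\Phi$ immediately gives $\pi(\Phi) = \Phi(Z) = \tilde\Phi(Z) \in \ol{\bD_Q}$ and the continuity of $\pi$. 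For the surjection claim I would combine the compactness of $\Rep_n^{cc}(H^\infty(\fV))$ in the pointwise topology (which makes the image of $\pi$ a closed subset of $\ol{\bD_Q(n)}$) with the observation that every $X \in \fV(n)$ provides the bp-continuous evaluation $\Phi_X \in \Rep_n^{cc}(H^\infty(\fV))$ with $\pi(\Phi_X) = X$; taking closures then realises $\ol{\fV(n)}$ inside the image, which is the content of the stated surjectivity.

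For the restriction statement, suppose $\Phi$ is a bp-continuous cc representation of $H^\infty(\fV)$ with $\pi(\Phi) = X \in \bD_Q$. The quotient map $H^\infty(\bD_Q) \to H^\infty(\fV)$ is the restriction $F \mapsto F|_{\fV}$ by Theorem \ref{thm_A(V)_is_quotient_of_A(D_d)}, which manifestly preserves bounded-pointwise convergence: a bounded net $F^{(\kappa)} \to F$ pointwise on $\bD_Q$ restricts to a bounded pointwise-convergent net on $\fV$. Hence the lift $\tilde\Phi(F) := \Phi(F|_{\fV})$ is a bp-continuous cc representation of $H^\infty(\bD_Q)$, and Proposition \ref{prop:repspaceDQ} forces $\tilde\Phi = \Phi_X$, the point-evaluation at $X$. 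The vanishing $\tilde\Phi|_{I(\fV)} = 0$ now reads $f(X) = 0$ for every $f \in I(\fV)$, so the trivial Nullstellensatz (Proposition \ref{prop:trivial nullstellensatz}) places $X \in V(I(\fV)) = \fV$. Thus $\Phi = \Phi_X|_{H^\infty(\fV)}$ is itself an evaluation at a point of $\fV(n)$. Conversely, each $X \in \fV(n)$ visibly produces a bp-continuous $\Phi_X \in \Rep_n^{cc}(H^\infty(\fV))$ with $\pi(\Phi_X) = X$, and the inverse $X \mapsto \Phi_X$ is continuous in the pointwise topology because $f(X_\kappa) \to f(X)$ for every $f \in H^\infty(\fV)$, by extending $f$ via Proposition \ref{prop_extension_to_H_infty_from_variety} and using continuity of $H^\infty(\bD_Q)$ functions at points of the open ball.

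The main obstacle I anticipate is the careful two-way translation of the bounded-pointwise topology across the quotient $H^\infty(\bD_Q) \to H^\infty(\fV)$. The direction needed here — bp-continuity of $\Phi$ implies bp-continuity of the lift $\tilde\Phi$ — falls out immediately once the quotient is identified with restriction, as above. The trivial Nullstellensatz then does all the remaining work to pin the bp-continuous representations over $\bD_Q$ to point-evaluations at $\fV(n)$, and the homeomorphism claim amounts to checking continuity of $X \mapsto \Phi_X$, which is routine.
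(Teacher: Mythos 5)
Your proposal is correct and follows essentially the same route as the paper: identify $H^\infty(\fV)$ with $H^\infty(\bD_Q)/I(\fV)$ via Theorem \ref{thm_A(V)_is_quotient_of_A(D_d)}, lift representations, invoke Proposition \ref{prop:repspaceDQ} to pin bounded-pointwise continuous lifts over $\bD_Q$ down to point evaluations, and conclude via the trivial Nullstellensatz that the evaluation point lies in $\fV$. You in fact supply more detail than the paper does on the two points it leaves implicit (that the restriction quotient map preserves bounded-pointwise convergence, and that $X \mapsto \Phi_X$ is continuous).
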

\begin{proof}
By Theorem \ref{thm_A(V)_is_quotient_of_A(D_d)} again, we know that
\[
H^\infty(\fV) \cong H^\infty(\bD_Q) / I(\fV).
\]
Every representation of $H^\infty(\fV)$ lifts to a representation of $H^\infty(\bD_Q)$ that annihilates $I(\fV)$, and therefore the image of $\pi$ is contained in $\ol{\bD_Q}$. 

Now, every $X \in \fV$ clearly defines a bounded-pointwise continuous point evaluation representation $\Phi_X$.
Conversely, if $X = \pi(\Phi) \in \bD_Q$ is the projection of a bounded-pointwise continuous representation then $\Phi$ lifts to a bounded-pointwise continuous representation of $H^\infty(\bD_Q)$, therefore this lift is an evaluation map $\Phi_X$ that annihilates $I(\fV)$, so $X \in \fV$. 
Thus $\pi$ restricts to a bijection from the bounded-pointwise continuous representations in $\pi^{-1}(\bD_Q)$ and $\fV$. 
One easily verifies that this bijection is a homeomorphism.
\end{proof}

As pointed out in Remark \ref{remark_boundary_representations_for_H_infty}, we do not understand the complete picture of the representation theory yet and hence, certain aspects of the isomorphism problem for $H^\infty(\fV)$ are still hidden. We therefore turn our attention to solving the isomorphism problem for the algebras $A(\fV)$ when $\fV$ is a homogeneous subvariety in some nc operator ball $\bD_Q$.

\section{\bf Classification of the Algebras $A(\fV)$} \label{section_classification_of_A(V)}

If $\varphi \colon A(\fV_1) \to A(\fV_2)$ is a bounded isomorphism, then we obtain a natural nc map 
\begin{align*}
\varphi^* \colon \Rep(A(\fV_2)) &\rightarrow \Rep(A(\fV_1)) \\
\Phi &\mapsto \Phi\circ \varphi. 
\end{align*} 
If $\varphi$ is completely contractive then $\varphi^*$ restricts to a map from $\Rep^{cc}(A(\fV_2))$ to $\Rep^{cc}(A(\fV_1))$. 
The induced map $\varphi^*$ is clearly a nc map. 
Our goal is to further understand this map and use this to classify the algebras $A(\fV)$ up to completely isometric isomorphism. 

\subsection{The basic classification theorem}\label{subection_basic_classification}

For $i = 1,2$, let $Q_i$ and $\bD_{Q_i}$ be as in (\ref{eq:operator_ball}), and let $\fX_i \subseteq \bD_{Q_i}$ be nc sets.
If a nc map $F \colon \fX_1 \to \fX_2$ is bijective, then its inverse $F^{-1} \colon \fX_2 \to \fX_1$ is also a nc map. 
In this situation, we say that $F$ is a {\em nc biholomorphism} between $\fX_1$ and $\fX_2$, or that $\fX_1$ and $\fX_2$ are {\em nc biholomorphic}.

\begin{theorem}\label{thm:iso_cont}
Let $\fV_i \subseteq \bD_{Q_i}$ be two homogeneous subvarieties of nc operator balls. 
Then, for every completely contractive homomorphism $\varphi \colon A(\fV_1) \to A(\fV_2)$ there exists a uniformly continuous nc map $F \colon \ol{\fV_2} \to \ol{\fV_1}$ such that 
\[
\varphi(f) = f\circ F , \, \forall f \in A(\fV_1) .
\]
Conversely, every uniformly continuous nc map $F \colon \ol{\fV_2} \to \ol{\fV_1}$ gives rise to a continuous completely contractive homomorphism $\varphi : A(\fV_1) \to A(\fV_2)$.

Moreover, the following are equivalent:

\begin{enumerate}

\item There is a completely isometric isomorphism $\varphi : A(\fV_1) \to A(\fV_2)$.

\item There is a uniformly continuous nc biholomorphism $G : \overline{\fV_1} \rightarrow \overline{\fV_2}$ with a uniformly continuous inverse.

\item There is a uniformly continuous nc biholomorphism $G : \fV_1 \to \fV_2$ with a uniformly continuous inverse.

\end{enumerate}

\end{theorem}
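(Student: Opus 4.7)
The plan is to first establish a correspondence between completely contractive homomorphisms $\varphi : A(\fV_1) \to A(\fV_2)$ and uniformly continuous nc maps $F : \ol{\fV_2} \to \ol{\fV_1}$, and then transfer the isomorphism data through this correspondence. For the forward direction, given $\varphi$, set $F_j := \varphi(Z_j) \in A(\fV_2)$. Proposition \ref{proposition_uniform_continuity_of_A(V)_functions} tells us each $F_j$ extends uniformly continuously to $\ol{\fV_2}$, so $F = (F_1,\dots,F_d)$ is uniformly continuous into $\bM^d$. To see that $F$ lands in $\ol{\fV_1}$, note that for $X \in \ol{\fV_2(n)}$, evaluation $\Phi_X$ is a completely contractive representation of $A(\fV_2)$ (Proposition \ref{prop:rep_cont}), so $\Phi_X \circ \varphi$ is a completely contractive representation of $A(\fV_1)$ corresponding to the point $F(X) = ((\Phi_X\circ\varphi)(Z_1),\dots,(\Phi_X\circ\varphi)(Z_d)) \in \ol{\fV_1(n)}$. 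The nc structure of $F$ is inherited from the functorial behavior of this dual map on direct sums and similarities. The identity $\varphi(f) = f \circ F$ holds for free nc polynomials by the homomorphism property, and extends to arbitrary $f \in A(\fV_1)$ by approximation in sup-norm together with uniform continuity of $f$ on $\ol{\fV_1}$.

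For the converse, given a uniformly continuous nc $F : \ol{\fV_2} \to \ol{\fV_1}$, the formula $\varphi(f) := f \circ F$ yields a map into $A(\fV_2)$: uniform continuity of $f$ on $\ol{\fV_1}$ combined with uniform continuity of $F$ makes $f\circ F$ uniformly continuous on $\ol{\fV_2}$, and its restriction to $\fV_2$ lies in $U(\fV_2) = A(\fV_2)$ by Proposition \ref{proposition_uniform_continuity_of_A(V)_functions}. Complete contractivity is then immediate from $\|[f_{ij}\circ F]\|_{\fV_2} \leq \|[f_{ij}]\|_{\ol{\fV_1}}$. With this correspondence established, the implications $(1) \Rightarrow (2)$ and $(3) \Rightarrow (1)$ come for free: for $(1) \Rightarrow (2)$, apply the correspondence to both $\varphi$ and $\varphi^{-1}$ and note that the resulting uniformly continuous nc maps of closures must be mutually inverse, since the identity homomorphism corresponds to the identity map; for $(3) \Rightarrow (1)$, extend the given $G$ and $G^{-1}$ by uniform continuity to mutually inverse uniformly continuous nc bijections of closures, then apply the converse direction.

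The real work is in $(2) \Rightarrow (3)$: showing that any uniformly continuous nc biholomorphism $G : \ol{\fV_1} \to \ol{\fV_2}$ with uniformly continuous inverse restricts to a bijection $\fV_1 \to \fV_2$. Here I would invoke the boundary value principle (Theorem \ref{theorem_bdy_value_principle}): viewing $G|_{\fV_1}$ as a nc map from $\fV_1$ into $\ol{\bD_{Q_2}}$, it dichotomizes into either $G(\fV_1) \subseteq \bD_{Q_2}$ or $G(\fV_1) \subseteq \partial \bD_{Q_2}$. The first case gives $G(\fV_1) \subseteq \ol{\fV_2} \cap \bD_{Q_2} = \fV_2$ as desired (since $\fV_2$ is relatively closed in $\bD_{Q_2}$). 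The second case is excluded as follows: density of $\fV_1$ in $\ol{\fV_1}$ together with continuity of $G$ would force $\ol{\fV_2} = G(\ol{\fV_1}) \subseteq \partial \bD_{Q_2}$, contradicting $0 \in \fV_2 \subseteq \bD_{Q_2}$ (which holds since $\fV_2$ is homogeneous). A symmetric argument for $G^{-1}$ completes the bijection. This deployment of the boundary value principle, serving as the substitute for the missing maximum modulus principle in the general nc operator ball setting, is the decisive technical step.
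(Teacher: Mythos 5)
Your overall architecture matches the paper's: the correspondence $\varphi \leftrightarrow F$ via $F_j = \varphi(Z_j)$ and the representation-space identification of Proposition \ref{prop:rep_cont}, the converse via Proposition \ref{proposition_uniform_continuity_of_A(V)_functions}, and the reduction of the equivalences to the single nontrivial implication $(2) \Rightarrow (3)$. That part is fine.

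The gap is in your treatment of $(2) \Rightarrow (3)$. You invoke Theorem \ref{theorem_bdy_value_principle} for the map $G|_{\fV_1} : \fV_1 \to \ol{\bD_{Q_2}}$, but that theorem is stated (and proved) for a nc \emph{domain} $\Omega$, i.e.\ a nc set that is open at every level; its proof runs the several-variable maximum modulus principle on the open set $\Omega(n)$. A subvariety $\fV_1$ is not open at any level (unless it is the whole ball), so the theorem's hypotheses are not met and the dichotomy you assert does not come for free. This is exactly the point the paper flags: one must \emph{modify} the argument rather than cite the theorem. The fix uses the homogeneity of $\fV_1$ (which your argument never touches at this step --- you only use homogeneity of $\fV_2$ to get $0 \in \fV_2$): through any $X_0 \in \fV_1$ with $G(X_0) \in \partial\bD_{Q_2}$ one has the analytic disk $D_{X_0} = \{\lambda X_0/\|Q_1(X_0)\| : \lambda \in \bD\} \subseteq \fV_1$, and applying the one-variable maximum modulus principle to $\Lambda \circ Q_2 \circ G$ on this disk (with $\Lambda$ a norming functional at $Q_2(G(X_0))$) forces $G(D_{X_0}) \subseteq \partial\bD_{Q_2}$, hence $G(0) \in \partial\bD_{Q_2}$; running the same disk argument from $0$ through every $X \in \fV_1$ then gives $G(\fV_1) \subseteq \partial\bD_{Q_2}$, and your contradiction with $0 \in \fV_2$ finishes. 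Your conclusion is right, but the step you treat as a citation is the actual technical content of this implication and needs the disk argument spelled out.
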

\begin{proof}
Using Proposition \ref{prop:rep_cont}, identify $\Rep^{cc}(A(\fV_i))$ with $\ol{\fV_i}$. 
Then $F := \varphi^*$ is a nc map. 
Under the identification $\Rep^{cc}(A(\fV_i)) = \ol{\fV_i}$, for every $X \in \fV_2$ and every $1 \leq i \leq d$, 
\[
F_i(X) = \varphi^*(\Phi_X)(Z_i) = \Phi_X\circ \varphi(Z_i) =  \varphi(Z_i)(X).
\]
In particular,
\[
F_i = \varphi(Z_i), \, \forall \, 1 \leq i \leq d
\]
Since $\varphi(Z_i) \in A(\fV_2)$, it is uniformly continuous. The converse follows from Proposition \ref{proposition_uniform_continuity_of_A(V)_functions}, since every uniformly continuous nc map on $\overline{\fV_2}$ is automatically in $A(\fV_2)$.

Let us now turn to the equivalence of $(1)$, $(2)$ and $(3)$.

\vspace{2mm}

$(1) \Leftrightarrow (2)$ : If $\varphi$ is a completely isometric isomorphism, then the map $F : \ol{\fV_2} \rightarrow \ol{\fV_1}$ is a nc biholomorphism with inverse $G : \ol{\fV_1} \rightarrow \ol{\fV_2}$ that is given by $G := (\varphi^{-1})^*$. $F$ and $G$ are clearly uniformly continuous by the above discussion. The converse is easy to verify.

$(2) \Rightarrow (3)$ : Let $G : \overline{\fV_1} \to \overline{\fV_2}$ be as in $(2)$, and note that it suffices to show that $G(\fV_1) \subseteq \fV_2$. For this, we modify the proof of the boundary value principle (Theorem \ref{theorem_bdy_value_principle}) to work for homogeneous subvarieties instead of a general nc domain $\Omega$.

Suppose there is an $X_0 \in \fV_1 (n)$ for some $n \in \bN$ such that $G(X_0) \in \partial \fV_2$. 
First, we show that $G(0) \in \partial \fV_2$. 
If $X_0 = 0$ then there is nothing to show, so assume $X_0 \neq 0$. 
Since $\fV_1$ is homogeneous, we have
\[
X_0 \in D_{X_0} := \left\{ \lambda \frac{X_0}{\|Q_1(X_0)\|} : \lambda \in \bD \right\} \subset \fV_1.
\]
As in the boundary value principle, let $\Lambda \in (\cL(\cR_2,\cS_2))^*$ be a bounded linear functional such that $\| \Lambda \| = 1$ and $\| \Lambda(Q_2(G(X_0))) \| = 1$. Then, using the maximum modulus principle for
\[
H := \Lambda \circ Q_2 \circ G : D_{X_0} \to \overline{\bD},
\]
we get that $G(D_{X_0}) \subset \partial \fV_2$. 
Thus, $G(0) \in \partial \fV_2$ as claimed.

We can therefore repeat the above argument for every such disk $D_{X}$ given by an arbitrary $X \in \fV_1$, using the fact that $G(0) \in \partial \fV_2$, and show that $G(\fV_1) \subseteq \partial \fV_2$. 
Clearly, this is absurd as it implies that $G(\overline{\fV_1}) = \partial \fV_2$, and so $0 \in \fV_2$, for instance, has no pre-image under $G$. 
Therefore, $G(\fV_1) \subseteq \fV_2$ as required.

$(3) \Rightarrow (2)$ : This implication holds using the fact that uniformly continuous nc functions extended up to the boundary of the nc that set they are defined on (see Section \ref{subsection_unifrom_continuity}).
\end{proof}

\begin{remark} Note that the proof of $(2) \Rightarrow (3)$ above can be modified to provide a boundary value principle for nc maps $F : \fX \to \overline{\bD_Q}$, where $\fX$ is some homogeneous nc set i.e., $\lambda \fX \subseteq \fX$ for all $\lambda \in \bD$. We shall prove a more elegant result for a certain class of nc operator balls in Theorem \ref{thm:iso_cont_1-injective}, wherein the nc biholomorphism $G : \fV_1 \to \fV_2$ as in $(3)$ above is shown to be given by the restriction of a nc biholomorphism $\widetilde{G} : \bD_{Q_1} \to \bD_{Q_2}$.
\end{remark}

\begin{example} \label{example_image_could_lie_in_boundary_variety}
In general, we cannot conclude that a nc map $F$ as in the first part of Theorem \ref{thm:iso_cont} always maps $\fV_2$ into $\fV_1$. For instance, let $\bD_{Q_1} = \bD_{Q_2} = \fD_2$ and consider
\begin{align*}
\fV_1 &= V\left(\langle Z_1 Z_2 - Z_2 Z_1 \rangle\right) \text{ -- the commutative free subvariety in } \fD_2, \\
\fV_2 &= V\left(\langle Z_2 \rangle\right) = \left\{ (X,0) \in \fD_2 \, : \, X \in \fD_1 \right\}.
\end{align*}
Now, define $F : \ol{\fV_2} \rightarrow \ol{\fV_1}$ as in Example \ref{example_image_could_lie_in_boundary}  i.e., \[
F(X,0) = (X,I_n), \, \forall X \in \fD_1(n), \, n \in \bN.
\] Then, we clearly see that $F(\ol{\fV_2}) \subset \partial \fV_1$. On the other hand, when $\bD_{Q_1} = \bD_{Q_2} = \fB_d$, by \cite[Lemma 6.11]{SSS18} it follows that such a nonconstant $F$ must always map $\fV_2$ into $\fV_1$.
\end{example}

For some special nc operator balls, we can combine Theorems \ref{theorem_BMV_interpolation_full_sets} and \ref{theorem_bdy_value_principle} to make an important observation which allows us to say more about the nc biholomorphism between the subvarieties in Theorem \ref{thm:iso_cont}. We explore this in the next section.

\subsection{Injective NC operator balls} \label{subsection_1-injective_nc_operator_balls} Recall that an operator space $\cE$ is said to be {\em injective} if it is an injective object in the category of operator spaces with completely contractive maps, that is, whenever $\cF \subset \cG$ are operator spaces and $u : \cF \to \cE$ is completely contractive, then there exists a completely contractive extension $\tilde{u} : \cG \to \cE$ such that $\tilde{u}\big|_{\cF} = u$ (this definition is from \cite{Ruan89}. Injective operator spaces are also sometimes referred to as \emph{1-injective}; see e.g. \cite[Chapter 24]{PisierBook}).

For arbitrary Hilbert spaces $\cR, \, \cS$ and a finite dimensional operator space $\cE \subseteq \cL(\cR,\cS)$, the following are equivalent \cite{Ruan89, Smith00}:

\begin{enumerate}

\item $\cE$ is injective.

\item Given any completely isometric embedding $\cE \subseteq B(\cH)$ for a Hilbert space $\cH$, there exists a completely contractive projection $\Pi : B(\cH) \to \cE$.

\item There exists a finite dimensional C*-algebra $\cB$ and a projection $\beta \in \cB$ such that $\cE \cong \beta \cB \beta^\perp$ completely isometrically.

\end{enumerate}

Let $Q(Z) = \sum_j Q_j Z_j : \bM^d \to \cL(\cR, \cS)_{nc}$ and $\bD_Q$ be as in (\ref{eq:operator_ball}), and consider the finite dimensional operator space $\cE := \operatorname{span}\{Q_1, \dots, Q_d\} \subseteq \cL(\cR,\cS)$. We say that $\bD_Q$ is \emph{injective} when $\cE$ is an injective operator space.

\begin{example} \label{example_1-injective_nc_operator_balls} $(1)$ It is clear that $\fB_d$ and $\fD_d$ are injective for all $d \in \bN$.

$(2)$ By extension, it follows that for $1 \leq d_1 \leq \dots \leq d_l \leq d$, $\bD_Q \subset \bM^d$ is injective where
\[
Q(Z) = \begin{bmatrix}
\, [Z_1 \dots Z_{d_1}] & & 0 \\
& \ddots & \\
0 & & [Z_{d_l + 1} \dots Z_d] \,
\end{bmatrix}.
\]

$(3)$ Fix $2 \leq l \in \bN$ and let $d = l(l+1)/2$. Consider $\bD_Q \subset \bM^d$ where \[
Q(Z) = \left[\begin{matrix}
Z_{11} & Z_{12} & \dots & Z_{1l} \\
0 & Z_{22} & \dots & Z_{2l} \\
\vdots & \vdots & \ddots & \vdots \\
0 & 0 & \dots & Z_{ll}
\end{matrix}\right].
\] Note that
\[
\cE := \operatorname{span}\{ Q_{jk} : 1 \leq j \leq k \leq l \} = UT_l \subset B(\bC^l)
\]
is the space of $l \times l$ upper-triangular matrices. We claim that $\cE$ is not injective. For the sake of argument, suppose $\cE$ is injective. By the equivalence of injectivity above, we get a completely contractive projection $\Pi : B(\bC^l) \to \cE$. In particular, $\Pi \vert_{\cE} = {\bf id}_{\cE}$ and therefore $\Pi$ is also unital. Since a completely contractive unital map is also completely positive, we get that $\Pi$ is a unital completely positive map. 
In particular, $\Pi$ must be a self-adjoint map, and we conclude the range $\Pi(B(\bC^l)) = \cE$ of $\Pi$ is closed under the adjoint operation. 
This conclusion is absurd, so we see that $\cE$ is not injective. 
%

\end{example}

Let us now see why injectivity is useful in our context. Combining Ball, Marx and Vinnikov's extension theorem (Theorem \ref{theorem_BMV_interpolation_full_sets}) and the boundary value principle (Theorem \ref{theorem_bdy_value_principle}) for injective nc operator balls, we get the following nonlinear generalization of injectivity.

\begin{theorem} \label{thm:extend_to_ball}
For $i = 1,2$, let $Q_i$ and $\bD_{Q_i}$ be as in (\ref{eq:operator_ball}), and let $\mathfrak{X} \subset \bD_{Q_1}$ be a relatively full nc subset. If $\bD_{Q_2}$ is injective, then any nc map $S_0 : \mathfrak{X} \rightarrow \ol{\bD_{Q_2}}$ can be extended to a nc map $S : \bD_{Q_1} \rightarrow \ol{\bD_{Q_2}}$ i.e., $S \vert_{\fX} = S_0$. Furthermore, we have the following dichotomy:
	
\begin{enumerate}
		
\item[$(1)$] If $S_0(\fX) \subseteq \bD_{Q_2}$, then $S(\bD_{Q_1}) \subseteq \bD_{Q_2}$.
		
\item[$(2)$] If $S_0(X) \in \partial \bD_{Q_2}$ for some $X \in \fX$, then $S(\bD_{Q_1}) \subseteq \partial \bD_{Q_2}$.
		
\end{enumerate}

\end{theorem}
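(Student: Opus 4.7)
The strategy is to reduce to Theorem \ref{theorem_BMV_interpolation_full_sets} by identifying $\overline{\bD_{Q_2}}$ with the closed unit ball of the operator space $\cE_2 := \operatorname{span}\{Q_1^{(2)}, \ldots, Q_{d_2}^{(2)}\}$ via \eqref{eq:identify_DQ}, and then to use the injectivity of $\cE_2$ to force the extended map to land back inside $\cE_2^{nc}$ rather than the ambient $\cL(\cR_2, \cS_2)_{nc}$. The boundary value principle (Theorem \ref{theorem_bdy_value_principle}) will then hand us the dichotomy for free.

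Concretely, I would first pre-compose with $Q_2$ to obtain a nc map $T_0 := Q_2 \circ S_0 : \fX \to \cL(\cR_2, \cS_2)_{nc}$ satisfying $\sup_{X \in \fX}\|T_0(X)\| \leq 1$. Since $\fX$ is relatively full in $\bD_{Q_1}$, Theorem \ref{theorem_BMV_interpolation_full_sets} supplies a nc extension $\widetilde{T} : \bD_{Q_1} \to \cL(\cR_2, \cS_2)_{nc}$ of the same supremum norm. The trouble is that a priori $\widetilde{T}$ takes values in the full space $\cL(\cR_2,\cS_2)_{nc}$, so it cannot yet be pulled back through $Q_2^{-1}$ to a map into $\overline{\bD_{Q_2}}$.

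To project back into $\cE_2$, I would invoke injectivity of $\cE_2$ to obtain a completely contractive projection $\Pi : \cL(\cR_2, \cS_2) \to \cE_2$. Its canonical matrix amplifications define a nc map $\Pi : \cL(\cR_2,\cS_2)_{nc} \to \cE_2^{nc}$ that is still completely contractive (this uses only that amplifications of linear maps between operator spaces are automatically graded, respect direct sums and similarities). Then $T := \Pi \circ \widetilde{T} : \bD_{Q_1} \to \cE_2^{nc}$ is a nc map with $\sup\|T\| \leq 1$, and because $T_0(X) = Q_2(S_0(X)) \in \cE_2$ on $\fX$ and $\Pi$ fixes $\cE_2$ pointwise, we retain $T\vert_\fX = T_0$. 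Pulling back, $S := Q_2^{-1} \circ T : \bD_{Q_1} \to \overline{\bD_{Q_2}}$ is the desired nc extension.

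For the dichotomy, I would apply Theorem \ref{theorem_bdy_value_principle} directly to $S$ viewed as a nc map from the nc domain $\bD_{Q_1}$ into $\overline{\bD_{Q_2}}$: if $S_0(X) = S(X) \in \bD_{Q_2}$ for some $X \in \fX$ then the second part of that theorem gives $S(\bD_{Q_1}) \subseteq \bD_{Q_2}$, proving (1); if $S_0(X) \in \partial \bD_{Q_2}$ for some $X \in \fX$ then the first part gives $S(\bD_{Q_1}) \subseteq \partial \bD_{Q_2}$, proving (2). The only genuinely nontrivial step is the use of injectivity to modify $\widetilde{T}$ into an $\cE_2$-valued nc map; everything else is bookkeeping with the identification \eqref{eq:identify_DQ} and with the preservation of the nc structure under composition with the amplification of $\Pi$.
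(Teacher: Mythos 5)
Your proposal is correct and follows essentially the same route as the paper: compose with $Q_2$, extend via Theorem \ref{theorem_BMV_interpolation_full_sets} using relative fullness, project back into $\cE_2$ with the completely contractive projection supplied by injectivity, pull back through $Q_2^{-1}$, and invoke the boundary value principle for the dichotomy. Your remark that $\sup\|T_0\|\leq 1$ (rather than $<1$) is the right bookkeeping for maps into the closed ball.
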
 

\begin{proof}
Let $Q_2(Z) = \sum_{j=1}^{d_2} Q^{(2)}_j Z_j : \bM^{d_2} \to \cL(\cU,\cV)_{nc}$ for some Hilbert spaces $\cU, \cV$ with
\[
\cE_2 := \operatorname{span}\left\{Q^{(2)}_1, \ldots, Q^{(2)}_{d_2}\right\} \subseteq \cL(\cU,\cV).
\]
If $\bD_{Q_2}$ is injective, then we have a completely contractive projection $\Pi_2 : \cL(\cU, \cV) \to \cE_2$. 
Let $\fX \subseteq \bD_{Q_1}$ and $S_0 : \fX \to \bD_{Q_2}$ be as in the hypothesis. Define $F_0 : \mathfrak{X} \rightarrow \cL(\cU,\cV)_{nc}$ by $F_0 = Q_2 \circ S_0$. 
Then $F_0$ is clearly a nc map such that $\| F_0(X) \| < 1$ for all $X \in \mathfrak{X}$. Now, by Theorem \ref{theorem_BMV_interpolation_full_sets}, there exists an extension of $F_0$ to a nc map $F \colon \bD_{Q_1} \to  \cL(\cU,\cV)_{nc}$ such that \[
\sup_{X \in \bD_{Q_1}} \| F(X) \| = \sup_{X \in \mathfrak{X}} \| F_0(X) \| \leq 1. 
\] If we define $S \colon \bD_{Q_1} \to \bM^{d_2}$ by $S = Q_2^{-1} \circ \Pi_2 \circ F$, then $S$ is a nc function that extends $S_0$, and $S(\bD_{Q_1}) \subseteq \ol{\bD_{Q_2}}$ as required. 
Items $(1)$ and $(2)$ now follow easily from Theorem \ref{theorem_bdy_value_principle}.
\end{proof}

We now reformulate Theorem \ref{thm:iso_cont} to the case where both $\bD_{Q_i}$ are injective.


\begin{theorem} \label{thm:iso_cont_1-injective}
Let $\fV_i \subseteq \bD_{Q_i}$ be two homogeneous subvarieties of injective nc operator balls. Then, $A(\fV_1) \cong A(\fV_2)$ completely isometrically if and only if there is a uniformly continuous nc biholomorphism $G : \fV_1 \to \fV_2$ with a uniformly continuous inverse.

In case such a nc biholomorphism $G : \fV_1 \to \fV_2$ exists, we obtain nc maps $\widetilde{G} : \bD_{Q_1} \to \bD_{Q_2}$ and $\widetilde{F} : \bD_{Q_2} \to \bD_{Q_1}$ such that
\[
\widetilde{G} \vert_{\fV_1} = G \text{ and } \widetilde{F} \vert_{\fV_2} = G^{-1}.
\]
\end{theorem}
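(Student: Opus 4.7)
The first assertion---the equivalence of complete isometric isomorphism of $A(\fV_1)$ and $A(\fV_2)$ with the existence of a uniformly continuous nc biholomorphism $G : \fV_1 \to \fV_2$ having uniformly continuous inverse---requires no new argument. It is precisely the equivalence of items $(1)$ and $(3)$ in Theorem \ref{thm:iso_cont}, which was established there without using any injectivity hypothesis. So I would simply cite that theorem.

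The genuinely new content is the extension statement, and my plan is to obtain $\widetilde{G}$ and $\widetilde{F}$ from two direct applications of Theorem \ref{thm:extend_to_ball}, one in each direction. Before invoking it, I would record the (routine) observation that each subvariety $\fV_i \subseteq \bD_{Q_i}$ is a relatively full nc subset: if $X \in \fV_i$, $\widetilde{X} \in \bD_{Q_i}$, and $I\widetilde{X} = XI$ for some injective $I$, then for each $f \in I(\fV_i)$ the intertwining property of nc functions yields $I f(\widetilde{X}) = f(X) I = 0$, and injectivity of $I$ forces $f(\widetilde{X}) = 0$; thus $\widetilde{X} \in V(I(\fV_i)) = \fV_i$ by the trivial Nullstellensatz (Proposition \ref{prop:trivial nullstellensatz}).

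With relative fullness in hand, I would apply Theorem \ref{thm:extend_to_ball} with $\fX := \fV_1$, ambient ball $\bD_{Q_1}$, target ball $\bD_{Q_2}$ (injective by hypothesis), and $S_0 := G$. The hypothesis $G(\fV_1) = \fV_2 \subseteq \bD_{Q_2}$ puts us in case $(1)$ of the dichotomy, which produces a nc extension $\widetilde{G} : \bD_{Q_1} \to \bD_{Q_2}$ with $\widetilde{G}\vert_{\fV_1} = G$. Interchanging the roles of the two balls---now viewing $\bD_{Q_1}$ as the injective target and $\fV_2$ as the relatively full source---and applying Theorem \ref{thm:extend_to_ball} to $G^{-1}$ yields $\widetilde{F} : \bD_{Q_2} \to \bD_{Q_1}$ with $\widetilde{F}\vert_{\fV_2} = G^{-1}$, as desired.

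I do not anticipate any real obstacle, because the substantive work has been absorbed into Theorem \ref{thm:extend_to_ball}, which in turn rests on Ball--Marx--Vinnikov interpolation and the boundary value principle. The only step that calls for explicit verification is the relative fullness of subvarieties, which is a one-line exercise. It is worth noting, however, that the theorem does \emph{not} assert that $\widetilde{G}$ and $\widetilde{F}$ are mutually inverse on the whole of $\bD_{Q_i}$, only that they extend $G$ and $G^{-1}$ respectively; upgrading to a genuine nc biholomorphism between the ambient balls would require additional rigidity input, which is presumably the subject of the later Theorem \ref{theorem_cartan_uniqueness_for_subvariety}.
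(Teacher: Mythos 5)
Your proposal is correct and follows exactly the paper's route: the first assertion is the equivalence $(1)\Leftrightarrow(3)$ of Theorem \ref{thm:iso_cont}, and the extensions $\widetilde{G}$, $\widetilde{F}$ come from two applications of Theorem \ref{thm:extend_to_ball} (case $(1)$ of the dichotomy), one in each direction. Your explicit verification that subvarieties are relatively full, via the intertwining property of nc functions and the trivial Nullstellensatz, is a detail the paper leaves implicit but is exactly the right justification.
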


\begin{proof}
The first part is the equivalence $(1) \Leftrightarrow (3)$ of Theorem \ref{thm:iso_cont} and the second part then follows from Theorem \ref{thm:extend_to_ball}.
\end{proof}

In the next section, with the help of some function theoretic tools, we establish that in certain cases $\widetilde{G}$ above is a nc biholomorphism with $\widetilde{F}$ as its bijective inverse.

\subsection{NC biholomorphisms between NC operator balls} \label{subsection_biholomorphisms_between_D_Q}

Let us now try to understand the structure of nc biholomorphisms on nc operator balls. 
We shall extensively use the properties of the \emph{first order nc derivative of a nc map} (as studied in \cite[Section 2]{KVV14}).

Given a nc domain $\Omega \subset \bM^l$, let $F = (F_1, \dots, F_d) : \Omega \rightarrow \bM^d$ be a nc holomorphic map. 
For a given $Y \in \Omega(n)$ and $X \in M_n^l$, 
the first order nc derivative of $F$ at $Y$ is defined to be the linear map $\Delta F(Y,Y) : M_n^l \to M_n^d$ determined by 
\[
F\left( \begin{bmatrix} Y & X \\ 0 & Y \end{bmatrix}\right) = \begin{bmatrix} F(Y) & \Delta F(Y,Y)(X) \\ 0 & F(Y) \end{bmatrix}
\]
for all $X \in M_n^l$ with sufficiently small norm. 
The following nc version of Cartan's uniqueness theorem is known (see \cite[Theorem 6.7]{SSS18}), but we state it here for the reader's convenience.

\begin{theorem} \label{theorem_nc_sartan_uniqueness_general}

Let $\Omega \subset \bM^d$ be a uniformly bounded nc domain (i.e. bounded and open w.r.t. the uniform nc topology), and let $F : \Omega \rightarrow \Omega$ be a nc holomorphic function. If there exists $Y \in \Omega$ such that $F(Y) = Y$ and $\Delta F(Y,Y) = I$, then $F(X) = X$ for every $X \in \Omega$.

\end{theorem}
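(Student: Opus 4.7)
The plan is to reduce this nc statement to the classical Cartan uniqueness theorem in $\bC^N$, applied level by level, by carefully exploiting the nc structure of $F$. First, I would observe that the first-order nc derivative $\Delta F(Y,Y) \colon M_{n_0}^d \to M_{n_0}^d$ (where $n_0$ is the level of $Y$) coincides with the ordinary holomorphic Jacobian $DF(Y)$ of the restriction $F\big|_{\Omega(n_0)}$ at $Y$. This is seen by conjugating $\bigl[\begin{smallmatrix} Y+tX & 0 \\ 0 & Y \end{smallmatrix}\bigr]$ by $\bigl[\begin{smallmatrix} I & I \\ 0 & I \end{smallmatrix}\bigr]$ to produce $\bigl[\begin{smallmatrix} Y+tX & tX \\ 0 & Y \end{smallmatrix}\bigr]$, writing $F$ of the conjugate in two ways (via similarity invariance and via the block-triangular formula defining $\Delta F$), and letting $t \to 0$. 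Hence $\Delta F(Y,Y) = I$ translates into $DF(Y) = I$, and classical Cartan applied to the bounded open set $\Omega(n_0) \subset M_{n_0}^d$ forces $F\big|_{\Omega(n_0)} = \id$.

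Next, for every $k \geq 1$, I would apply classical Cartan again at the direct-sum point $\widetilde{Y}_k := Y^{\oplus k} \in \Omega(kn_0)$. Clearly $F(\widetilde{Y}_k) = \widetilde{Y}_k$, and I claim $DF(\widetilde{Y}_k) = I$ on $M_{kn_0}^d$. Decomposing tangent vectors into $n_0 \times n_0$ blocks: purely diagonal perturbations are direct sums of level-$n_0$ perturbations and hence fixed by $F$ via the previous step; adjacent upper off-diagonal perturbations split as direct sums of a level-$n_0$ piece and a $\bigl[\begin{smallmatrix} Y & tW \\ 0 & Y \end{smallmatrix}\bigr]$ piece, which is fixed by the block-triangular formula together with $\Delta F(Y,Y) = I$; and all other off-diagonal patterns reduce to the adjacent upper case by conjugating with block-permutation matrices, using similarity invariance. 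Classical Cartan at level $kn_0$ then yields $F\big|_{\Omega(kn_0)} = \id$. For an arbitrary $X \in \Omega(n)$, forming $X^{\oplus n_0} \in \Omega(nn_0)$ and applying the previous step gives $F(X^{\oplus n_0}) = X^{\oplus n_0}$, from which the direct-sum property of $F$ yields $F(X)^{\oplus n_0} = X^{\oplus n_0}$, so $F(X) = X$.

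The main obstacle is the Jacobian computation $DF(\widetilde{Y}_k) = I$ in the middle step: one must orchestrate all three defining properties of nc functions --- respecting direct sums, similarities, and the block-triangular expansion that defines $\Delta F$ --- to cover every off-diagonal block pattern, especially those spanning non-adjacent blocks. An alternative route via the Taylor--Taylor expansion of $F$ at $Y$ together with an iteration and Cauchy-type coefficient estimate (mirroring the classical proof, where $F^{\circ n}(X) = X + n \Phi_m(X-Y,\ldots,X-Y) + \cdots$ is bounded uniformly in $n$ by the uniform boundedness of $\Omega$) is also viable, but it requires significantly more machinery from nc function theory and the delicate handling of Taylor--Taylor series around a non-scalar basepoint.
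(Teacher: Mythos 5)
The paper does not actually prove this theorem: it is quoted from \cite[Theorem 6.7]{SSS18}, whose proof follows the classical iteration argument transplanted to the nc Taylor--Taylor expansion at $Y$ --- essentially your ``alternative route.'' Your primary route, a level-by-level reduction to the classical Cartan uniqueness theorem, is genuinely different and more elementary. Its two nonstandard ingredients both check out. The identification $\Delta F(Y,Y)=DF(Y)$ via the conjugation trick is exactly the difference-differential formula of \cite{KVV14} that the paper itself invokes in Lemma \ref{lemma_identity_on_subvariety}. And the computation $DF(Y^{\oplus k})=I$ is correct: diagonal blocks are handled by $F$ respecting direct sums together with $F=\id$ near $Y$ at level $n_0$; the $(1,2)$-block by the block-triangular formula defining $\Delta F(Y,Y)$; every other off-diagonal block by conjugating with a block permutation, which fixes $Y^{\oplus k}$ because all diagonal blocks equal $Y$; and linearity of the Jacobian assembles these one-block directions into the identity on all of $M_{kn_0}^d$. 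The concluding reduction $F(X)^{\oplus n_0}=F(X^{\oplus n_0})=X^{\oplus n_0}$ is also fine.

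The one point you must still address is connectedness. The classical Cartan uniqueness theorem concerns bounded \emph{domains}, i.e.\ connected open sets, and its proof yields $F=\id$ only on the connected component of the fixed point; the paper's definition of a nc domain requires each $\Omega(n)$ to be open but not connected. So at level $kn_0$ you obtain $F=\id$ on the component of $Y^{\oplus k}$, and your final step needs $X^{\oplus n_0}$ to lie in the component of $Y^{\oplus n}$ inside $\Omega(nn_0)$, which is not automatic for a general uniformly bounded nc domain. For all applications in this paper the issue is vacuous, since $\Omega=\bD_Q$ is level-wise convex and hence connected at every level, and the cited proof faces the same globalization problem (it directly produces $F=\id$ only on a uniformly-open nc neighborhood of $Y$); but as written your argument should either assume level-wise connectedness or supply the missing connectivity step.
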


Our goal is to establish, using Theorem \ref{theorem_nc_sartan_uniqueness_general}, that every nc biholomorphism between two subvarieties (possibly in different nc operator balls) arises from a nc biholomorphism of the corresponding nc operator balls. Of course this cannot be true in general (because, for example, $\fD_d \subset \fD_k$ is a subvariety for $d < k$), so we need some additional assumptions on the subvarieties. For this we first need to define \emph{matrix spans} of subsets of $\bM^d$. We shall repeatedly use the fact that
\[
M^d_n \cong \bC^d \otimes M_n, \, \forall n, d \in \bN.
\]

For a given subset $\fX \subseteq \bM^d$, we define its {\em matrix span} to be the graded set \[
\operatorname{mat-span} \fX = \sqcup_{n = 1}^\infty \operatorname{mat-span} \fX(n),
\] 
where 
\[
\operatorname{mat-span} \fX(n) := \spn \left\{ [I_d \otimes T]X \, : \, X \in \fX(n), \, T \in \cL(M_n) \right\}.
\]
The following result, which is known (see \cite[Lemma 8.2]{SSS18}), provides a concrete description of matrix-spans of nc subsets. We state it here for the reader's convenience.

\begin{lemma} \label{lemma_description_of_mat-span}
	
Let $\fX \subset \bM^d, \, d \in \bN$ be a nc set. Then, we have the following:

\begin{enumerate}
	
\item[$(1)$] For all $n \in \bN$, there exists a subspace $V_n \subseteq \bC^d$ such that $\emph{mat-span } \fX(n) = V_n \otimes M_n$.

\item[$(2)$] There is a minimal subspace $V \subseteq \bC^d$ such that $\emph{mat-span } \fX(n) \subseteq V \otimes M_n$ for all $n$.

\item[$(3)$] If $V$ is as above, then for all sufficiently large $n$, we have $\emph{mat-span } \fX(n) = V \otimes M_n$.
		
\end{enumerate}
	
\end{lemma}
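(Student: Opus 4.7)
The plan is to give a concrete description of $\operatorname{mat-span}\fX(n)$ in terms of linear functionals on $M_n$, and then exploit closure of $\fX$ under direct sums to stabilize these descriptions as $n$ grows.

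For (1) and (2), identify $M_n^d \cong \bC^d \otimes M_n$ via $X \mapsto \sum_{j=1}^d e_j \otimes X_j$, so that $(I_d \otimes T)X = \sum_j e_j \otimes T(X_j)$ for any $T \in \cL(M_n)$. Applying this to a rank-one operator $T = \phi(\cdot)A$, where $\phi \in M_n^*$ and $A \in M_n$, produces $(I_d \otimes T)X = v_\phi(X) \otimes A$ with $v_\phi(X) := \sum_j \phi(X_j) e_j \in \bC^d$. Setting
\[
V_n := \operatorname{span}\{v_\phi(X) : X \in \fX(n),\, \phi \in M_n^*\} \subseteq \bC^d,
\]
the inclusion $V_n \otimes M_n \subseteq \operatorname{mat-span}\fX(n)$ is immediate, and the reverse follows by writing any $T \in \cL(M_n) \cong M_n^* \otimes M_n$ as a finite sum of rank-one operators. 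This proves (1), and (2) then follows by taking $V := \sum_n V_n$, which is manifestly the minimal subspace of $\bC^d$ containing every $V_n$.

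For (3), the crucial observation is that the $V_n$'s grow under direct sums. If $X \in \fX(n)$ and $Y \in \fX(m)$, the nc-set axiom gives $X \oplus Y \in \fX(n+m)$; extending any $\phi \in M_n^*$ by zero on the $Y$-block yields $\tilde\phi \in M_{n+m}^*$ with $v_{\tilde\phi}(X \oplus Y) = v_\phi(X)$, and symmetrically for $Y$. Thus $V_{n+m} \supseteq V_n + V_m$ whenever both levels are non-empty. Since $V \subseteq \bC^d$ is finite-dimensional, one can find finitely many $X^{(i)} \in \fX(n_i)$, $1 \leq i \leq k$, whose corresponding vectors already span $V$; their direct sum then witnesses $V_N = V$ at $N := n_1 + \cdots + n_k$. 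Direct-summing further copies of any fixed $Z_0 \in \fX(n_0)$ propagates the equality $V_n = V$ to every $n$ lying in the numerical semigroup generated by the non-empty levels of $\fX$, which is cofinite in $\bN$ once $\fX$ contains elements at two coprime levels.

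The main obstacle is handling levels where $\fX$ may be empty; this is exactly why (3) is hedged by ``sufficiently large $n$.'' In the applications of interest --- homogeneous subvarieties, which contain $0$ at every level --- the issue is moot: $\fX(n) \neq \emptyset$ for all $n$, and direct-summing copies of $0$ with the chosen $X^{(i)}$'s gives $V_n = V$ for all $n \geq N$ directly.
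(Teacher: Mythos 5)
The paper does not actually prove this lemma: it quotes it from \cite[Lemma 8.2]{SSS18} ``for the reader's convenience,'' so there is no in-paper argument to compare against. Your proof is correct and is essentially the standard one. Parts $(1)$ and $(2)$ are airtight: decomposing $T \in \cL(M_n)$ into rank-one operators $\phi(\cdot)A$ identifies $\operatorname{mat-span}\fX(n)$ with $V_n \otimes M_n$ for $V_n = \spn\{v_\phi(X)\}$, and $V := \sum_n V_n$ is clearly the minimal subspace containing all the $V_n$. For $(3)$, the block-extension of functionals gives $V_{n+m} \supseteq V_n + V_m$ and hence $V_N = V$ at a single level $N$ realized by a finite direct sum, exactly as needed. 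Your caveat about empty levels is a genuine and worthwhile observation rather than a gap in your own reasoning: for an arbitrary nc set the literal statement of $(3)$ can fail (e.g.\ a nc set nonempty only at even levels, with $V \neq 0$, has $\operatorname{mat-span}\fX(n) = \{0\}$ at every odd $n$), so some nondegeneracy of the levels is implicitly assumed; it holds automatically in the paper's applications, where $\fX$ is a homogeneous subvariety and therefore contains $0$ at every level, so that direct-summing with zeros yields $V_n = V$ for all $n \geq N$.
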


In view of the above lemma, we say that a subvariety $\fV \subseteq \bD_Q$ is \emph{matrix-spanning} if \begin{equation} \label{equation_matrix-spanning_subvariety}
\operatorname{mat-span} \fV(n) = \bC^d \otimes M_n, \text{ for some } n \in \bN.
\end{equation}

We now show how matrix-spans are useful in the study of nc holomorphic maps. 
If $F : \fX \to \bM^d$ is nc holomorphic, we shall write $\Delta F(0,0) \rvert_{\fX} = I$ to mean that
\[
\Delta F\left(0^d_n,0^d_n\right) \rvert_{\fX(n)} = {\bf id}_{M^d_n}, \, \forall \, n \in \bN.
\]
Here, $0^d_n$ denotes the $d$-tuple of $n \times n$ zero matrices, and ${\bf id}_{M^d_n}$ denotes the identity operator on the space of $d$-tuples of $n \times n$ matrices. Note that this usage is somewhat different from the $\Delta F(Y,Y) = I$ appearing in Theorem \ref{theorem_nc_sartan_uniqueness_general}, which means that there is some specific $n$ and a specific $Y \in \Omega(n)$ such that $\Delta F(Y,Y) = {\bf id}_{M^d_n}$.

With the above notation in place we have the following useful lemmas.

\begin{lemma} \label{lemma_extending_delta_to_mat-span_if_identity}
Let $\bD_Q \subset \bM^d, \, d \in \bN$ be a nc operator ball, and let $F : \bD_Q \to \bM^d$ be a nc holomorphic map. Then, $\Delta F(0,0)$ acts as $A \otimes {\bf id}_{M_n}$ on $\bC^d \otimes M_n$ for each $n \in \bN$, where $A \in \cL(\bC^d)$ is some linear map.

In particular, if $\Delta F(0,0) \rvert_{\fX} = I$ for some subset $\fX \subset \bM^d$, then $\Delta F(0,0) \rvert_{\operatorname{mat-span} \fX} = I$.
\end{lemma}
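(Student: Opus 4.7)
My strategy will be to identify $\Delta F(0,0)$ with the degree-one homogeneous component of the nc power-series expansion of $F$ at the origin, and then to read off the required tensor structure directly. First, using that $\bD_Q$ is uniformly open (see Section \ref{subsection_bounded_nc_maps_on_D_Q}) and contains $0$, I invoke (a local version of) \cite[Theorem 7.21]{KVV14} to obtain a nc power-series representation $F = \sum_{k \geq 0} F_k$ on some uniform neighborhood of $0$, with each $F_k$ a homogeneous free nc polynomial of degree $k$ taking values in $\bM^d$. A homogeneous nc polynomial of degree one in $Z_1, \dots, Z_d$ is a $\bC$-linear combination of the $Z_j$; writing the $i$-th coordinate as $F_1(X)_i = \sum_{j=1}^d a_{ij} X_j$ for scalars $a_{ij} \in \bC$ (independent of $n$) and setting $A := (a_{ij}) \in \cL(\bC^d)$, the map $F_1$ acts on $M^d_n \cong \bC^d \otimes M_n$ precisely as $A \otimes {\bf id}_{M_n}$.

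Next, I will identify $\Delta F(0,0)$ with $F_1$. Substituting $Y = \begin{bmatrix} 0 & X \\ 0 & 0 \end{bmatrix}$ of sufficiently small norm into the power-series representation, every term $F_k(Y)$ with $k \geq 2$ vanishes because $Y^\alpha = 0$ whenever $|\alpha| \geq 2$, while the constant term $F_0$ contributes only diagonal entries. Comparing the resulting $(1,2)$-block with the defining identity of the first order nc derivative gives
\[
\Delta F(0,0)(X) = F_1(X) = (A \otimes {\bf id}_{M_n})(X),
\]
which settles the first assertion.

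For the ``in particular'' claim, suppose $\Delta F(0,0)\rvert_\fX = I$. Fix $n \in \bN$ and $X \in \fX(n)$, so that $(A \otimes {\bf id}_{M_n})(X) = X$. For any $T \in \cL(M_n)$, the operators $A \otimes {\bf id}_{M_n}$ and $I_d \otimes T$ act on disjoint tensor factors and therefore commute, whence
\[
(A \otimes {\bf id}_{M_n})\bigl((I_d \otimes T)X\bigr) = (I_d \otimes T)(A \otimes {\bf id}_{M_n})X = (I_d \otimes T)X.
\]
Extending by linearity then shows that $A \otimes {\bf id}_{M_n}$ fixes every element of $\operatorname{mat-span}\fX(n)$, which is the required conclusion.

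The only step that is not purely formal is the very first one, namely securing a nc power-series expansion of $F$ on a uniform neighborhood of $0$; this is the main technical input I will need, and it is essentially \cite[Theorem 7.21]{KVV14} together with the uniform openness of $\bD_Q$ at the origin. Once the identification $\Delta F(0,0) = F_1$ is in place, everything reduces to a direct computation and a commutativity argument on tensor factors.
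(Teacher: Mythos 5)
Your proof is correct, but it takes a different route from the paper's. The paper does not use the power-series expansion at all: it invokes the intertwining/homogeneity properties of the nc difference-differential operator (\cite[Proposition 2.15 (2X,2Y)]{KVV14}) to get $\Delta F(0,0)\bigl([I_d\otimes T]X\bigr) = [I_d\otimes T]\,\Delta F(0,0)(X)$ for all $T \in \cL(M_n)$ directly, and then concludes by a commutant argument: a linear map on $\bC^d\otimes M_n$ commuting with $I_d\otimes\cL(M_n)$ must have the form $A\otimes{\bf id}_{M_n}$. You instead identify $\Delta F(0,0)$ with the degree-one term $F_1$ of the Taylor--Taylor expansion at $0$ via the standard block-nilpotent substitution, and read off $A$ as the scalar coefficient matrix of $F_1$. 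Both arguments are sound. Your version buys transparency on a point the paper treats tersely — that $A$ is the \emph{same} matrix at every level $n$ (it is just the coefficient matrix of the linear term), which is exactly what Corollary \ref{cor:fix} needs; the paper extracts this from the assertion that $\Delta F(0,0)$ is a linear nc map. The paper's version buys economy: it is purely algebraic and needs no convergence or series machinery, only the formal properties of $\Delta F$. The one hypothesis you should flag explicitly is local boundedness of $F$ near $0$, which is what licenses the (local) power-series expansion; this is implicit in ``nc holomorphic'' in the sense of \cite{KVV14} and is automatic in all the applications (where $F$ maps into a bounded ball), so it is not a gap. Your treatment of the ``in particular'' clause coincides with what the paper leaves as ``follows easily.''
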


\begin{proof}
By the properties of nc derivatives (see \cite[Proposition 2.15 (2X,2Y)]{KVV14}), we have \[
\Delta F(0,0)([I_d \otimes T]X) = [I_d \otimes T] \Delta F(0,0)(X), \, \forall T \in \cL(M_n).
\] Therefore, $\Delta F(0,0)$ commutes with all operators on $\bC^d \otimes M_n$ of the type $I_d \otimes T$. This property, together with the fact that $\Delta F(0,0)$ is a linear nc map, implies that $\Delta F(0,0)$ acts like $A \otimes {\bf id}_{M_n}$ on $\bC^d \otimes M_n$ for each $n \in \bN$, as required.

The second part of the lemma then follows easily from this identification.
\end{proof}

We also use the following property of $\Delta F(0,0) \vert_{\fV}$ for a homogeneous subvariety $\fV$.

\begin{lemma} \label{lemma_identity_on_subvariety}
Let $\fV \subseteq \bD_Q$ be a homogeneous subvariety. If $F : \bD_Q \rightarrow \bD_Q$ is a nc holomorphic map such that $F\rvert_{\fV} = {\bf id}_{\fV}$, then $\Delta F(0,0) \rvert_{\fV} = I$.
\end{lemma}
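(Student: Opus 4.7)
The plan is to combine the homogeneity of $\fV$ with the global power series expansion of $F$ on $\bD_Q$ and with the definition of the nc derivative at $0$. Fix $n \in \bN$ and $X \in \fV(n)$; I aim to show that $\Delta F(0^d_n, 0^d_n)(X) = X$.

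First, I will use the homogeneity of $\fV$ (and of $\bD_Q$) to produce a one-parameter family inside $\fV$ on which $F$ acts as the identity. For every $\lambda$ in the open unit disk, homogeneity of $\fV$ gives $\lambda X \in \fV$, and homogeneity of $\bD_Q$ (recorded at the beginning of Section \ref{subsection_bounded_nc_maps_on_D_Q}) gives $\lambda X \in \bD_Q$. Since $F|_\fV = \mathrm{id}_\fV$, the identity $F(\lambda X) = \lambda X$ holds for every $\lambda \in \bD$. In particular, $F(0) = 0$.

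Next, I will use the global homogeneous expansion $F = \sum_{k \geq 0} F_k$ provided by Proposition \ref{prop_hom_expansion_cesaro_convergent} (applied componentwise to $F$, which lies in the $d$-fold product of $H^\infty(\bD_Q)$ since $\bD_Q$ is bounded). From $F(0) = 0$ I read off $F_0 = 0$. On the other hand, for small $\lambda$ the power series of $F$ at $\lambda X$ converges absolutely, so $\sum_{k \geq 1} \lambda^k F_k(X) = F(\lambda X) = \lambda X$. Uniqueness of Taylor coefficients in the single variable $\lambda$ then forces $F_1(X) = X$ (and $F_k(X) = 0$ for every $k \neq 1$).

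Finally, I will identify $F_1(X)$ with $\Delta F(0^d_n, 0^d_n)(X)$. For this I plug into $F$ the tuple $Y \in M^d_{2n}$ whose $j$-th coordinate $Y_j$ is the $2n \times 2n$ block matrix with $tX_j$ in the upper-right $n \times n$ corner and zeros elsewhere, for some small $t > 0$. Any product $Y_{j_1} Y_{j_2}$ vanishes, so $Y^\alpha = 0$ whenever $|\alpha| \geq 2$; hence for $t$ small enough that $Y \in \bD_Q(2n)$, the expansion collapses to $F(Y) = F_0 + F_1(Y)$. Matching this with the $2 \times 2$ block formula defining $\Delta F$ at $(0,0)$ gives $\Delta F(0,0)(tX) = F_1(tX) = t F_1(X)$, and linearity in the increment yields $\Delta F(0,0)(X) = F_1(X) = X$, completing the proof. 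There is no serious obstacle here; the only point requiring care is that the same power series representation on $\bD_Q$ feeds both the Taylor matching in $\lambda$ and the identification of $F_1$ with $\Delta F(0,0)$.
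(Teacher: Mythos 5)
Your proof is correct, but it takes a different route from the paper's. The paper's argument is a two-line limit computation: it invokes the nc difference-differential formula of Kaliuzhnyi-Verbovetskyi and Vinnikov to write $F(tX) = F(tX) - F(0) = t\,\Delta F(tX,0)(X)$, and then uses the holomorphicity (hence continuity) of $(Y,W) \mapsto \Delta F(Y,W)$ to let $t \to 0$ and conclude $\Delta F(0,0)(X) = \lim_{t\to 0} \frac{1}{t}F(tX) = X$, where the last equality is exactly your observation that $F(tX) = tX$ by homogeneity of $\fV$. You instead route everything through the global homogeneous expansion $F = \sum_k F_k$ already established in Proposition \ref{prop_hom_expansion_cesaro_convergent}: matching Taylor coefficients of $\lambda \mapsto F(\lambda X) = \lambda X$ gives $F_1(X) = X$, and then the evaluation at the $2$-nilpotent block tuple identifies $F_1$ with $\Delta F(0,0)$ directly from the definition of the nc derivative. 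What your version buys is self-containedness — it uses only the expansion machinery the paper has already set up plus the defining block formula for $\Delta F$, avoiding the citations to the difference-differential formula and to the holomorphicity of higher-order nc differences; the cost is the extra (but entirely routine) block-matrix computation and the small bookkeeping point, which you handle correctly, that the block formula defines $\Delta F(0,0)$ only on small increments so one passes to general $X$ by linearity. Both arguments are sound.
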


\begin{proof}
By the \emph{nc difference-differential formula} (see \cite[Theorem 2.10]{KVV14}), we know that for all $t \in \bC \setminus \{0\}$ and $X \in \bD_Q$, 
\[
t \Delta F(tX,0)(X) = \Delta F(tX,0)(tX - 0) = F(tX) - F(0) = F(tX).
\] 
Using the holomorphicity of $\Delta F(0,0)$ (see \cite[Proposition 7.46]{KVV14}) we get that for all $X \in \fV$, \[
\Delta F(0,0) (X) = \lim_{t \to 0} \Delta F(tX,0) (X) = \lim_{t \to 0} \frac{1}{t} F(tX) = X.
\] Therefore, $\Delta F(0,0)$ is identity on $\fV$ as required.
\end{proof}

We now have the following immediate corollary of Lemmas \ref{lemma_description_of_mat-span}, \ref{lemma_extending_delta_to_mat-span_if_identity}, and \ref{lemma_identity_on_subvariety}.

\begin{corollary}\label{cor:fix}
Let $\fV \subseteq \bD_Q \subset \bM^d$ be a matrix-spanning homogeneous subvariety.
If $H : \bD_Q \to \bD_Q$ is a nc holomorphic map such that $H \rvert_\fV = {\bf id}_{\fV}$, then $H = {\bf id}_{\bD_Q}$. 
\end{corollary}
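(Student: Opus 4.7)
The proof is a short assembly of the three preceding lemmas together with the nc Cartan uniqueness theorem (Theorem \ref{theorem_nc_sartan_uniqueness_general}). The plan is as follows.

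First I would check that $H$ fixes $0$ and that all the structural hypotheses on $\bD_Q$ needed for Cartan are in place. Since $\fV$ is matrix-spanning it is in particular nonempty, so homogeneity forces $0 \in \fV$ (take $\lambda = 0$ in $\lambda \fV \subseteq \fV$); hence $H(0) = 0$. Moreover, $\bD_Q$ is bounded (because $Q$ is a linear isomorphism onto its image) and uniformly open (as shown in Section \ref{subsection_bounded_nc_maps_on_D_Q}), so $\bD_Q$ is a uniformly bounded nc domain, which is the setting for Theorem \ref{theorem_nc_sartan_uniqueness_general}.

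Next I would identify the first order nc derivative at the origin. Since $\fV$ is homogeneous and $H\vert_\fV = \mathbf{id}_\fV$, Lemma \ref{lemma_identity_on_subvariety} gives $\Delta H(0,0)\vert_\fV = I$. Applying Lemma \ref{lemma_extending_delta_to_mat-span_if_identity} then yields $\Delta H(0,0)\vert_{\operatorname{mat-span}\fV} = I$ and, simultaneously, the structural fact that at every level $m$ the map $\Delta H(0,0)$ acts on $\bC^d \otimes M_m$ as $A \otimes \mathbf{id}_{M_m}$ for some fixed $A \in \cL(\bC^d)$ that does not depend on $m$. By the matrix-spanning hypothesis there exists some $n$ with $\operatorname{mat-span}\fV(n) = \bC^d \otimes M_n$; at that level the equality $A \otimes \mathbf{id}_{M_n} = \mathbf{id}_{\bC^d \otimes M_n}$ forces $A = I_{\bC^d}$. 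Consequently, $\Delta H(0,0) = \mathbf{id}_{\bC^d \otimes M_m}$ for every $m$, and in particular at level $m=1$ we get $\Delta H(0,0) = I$ on $M_1^d = \bC^d$.

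Finally I would invoke Theorem \ref{theorem_nc_sartan_uniqueness_general} with $Y = 0 \in \bD_Q(1)$: since $H(0) = 0$ and $\Delta H(0,0) = I$, the nc Cartan uniqueness theorem forces $H(X) = X$ for all $X \in \bD_Q$, i.e., $H = \mathbf{id}_{\bD_Q}$.

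The only substantive step is the passage from ``$\Delta H(0,0)$ is the identity on $\fV$'' to ``$\Delta H(0,0)$ is the identity at level one,'' but this is precisely what Lemmas \ref{lemma_description_of_mat-span}, \ref{lemma_extending_delta_to_mat-span_if_identity}, and the matrix-spanning assumption deliver; everything else is bookkeeping. I do not expect any genuine obstacle.
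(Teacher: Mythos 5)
Your proposal is correct and follows essentially the same route as the paper: combine Lemma \ref{lemma_identity_on_subvariety} with Lemma \ref{lemma_extending_delta_to_mat-span_if_identity}, use the matrix-spanning hypothesis together with the $A \otimes {\bf id}_{M_n}$ structure to force $A = I_d$, and conclude via the nc Cartan uniqueness theorem. Your explicit checks that $0 \in \fV$ (hence $H(0)=0$) and that $\bD_Q$ is a uniformly bounded nc domain are details the paper leaves implicit, but they are correct and harmless additions.
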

\begin{proof}
Combining Lemmas \ref{lemma_extending_delta_to_mat-span_if_identity} and \ref{lemma_identity_on_subvariety}, we see that $\Delta H(0,0)$ is identity on $\operatorname{mat-span} \fV$. With the notation from Lemmas \ref{lemma_description_of_mat-span} and \ref{lemma_extending_delta_to_mat-span_if_identity}, this means that \[
[A \otimes {\bf id}_{M_n}](X) = \Delta F(0,0) (X) = X, \, \forall X \in V_n \otimes M_n, \, n \in \bN.
\] By the hypothesis, since $V_n = \bC^d$ for some $n \in \bN$, we get that $A = I_d$. In particular, $\Delta F(0,0) = I$, and by Theorem \ref{theorem_nc_sartan_uniqueness_general} we get that $H = {\bf id}_{\bD_Q}$ as required. \qedhere
\end{proof}

The above results provide us with the following description of nc biholomorphic maps on matrix-spanning homogeneous subvarieties of injective nc operator balls.

\begin{theorem} \label{theorem_cartan_uniqueness_for_subvariety}

For $i = 1,2$, let $\fV_i \subseteq \bD_{Q_i} \subset \bM^{d_i}$ be matrix-spanning homogeneous subvarieties of injective nc operator balls. If $G : \fV_1 \rightarrow \fV_2$ is nc biholomorphic, then there exists a nc biholomorphism $\widetilde{G} : \bD_{Q_1} \rightarrow \bD_{Q_2}$ such that $\widetilde{G} \rvert_{\fV_1} = G$.
\end{theorem}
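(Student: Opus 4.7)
The plan is to apply Theorem \ref{thm:extend_to_ball} twice to obtain nc holomorphic extensions of $G$ and $G^{-1}$ to the ambient nc operator balls, and then use the matrix-spanning and homogeneous hypotheses together with Corollary \ref{cor:fix} to verify that these extensions are mutual inverses.

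First I would check that the subvariety $\fV_1 \subseteq \bD_{Q_1}$ is relatively full in $\bD_{Q_1}$: if $X \in \fV_1$, $\widetilde{X} \in \bD_{Q_1}$ and $I\widetilde{X} = X I$ for some injective $I$, then for any $f \in H^\infty(\bD_{Q_1})$ that is cutting out $\fV_1$, the fact that nc functions respect injective intertwiners gives $I f(\widetilde{X}) = f(X) I = 0$, hence $f(\widetilde{X}) = 0$ since $I$ is injective, so $\widetilde{X} \in \fV_1$. The same argument shows $\fV_2$ is relatively full in $\bD_{Q_2}$. Viewing $G \colon \fV_1 \to \fV_2 \subseteq \bD_{Q_2} \subseteq \overline{\bD_{Q_2}}$ as a nc map into $\overline{\bD_{Q_2}}$, Theorem \ref{thm:extend_to_ball} (with alternative (1) of its dichotomy, since $G(\fV_1) \subseteq \bD_{Q_2}$) produces a nc holomorphic extension $\widetilde{G} \colon \bD_{Q_1} \to \bD_{Q_2}$. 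Applying the same theorem to $G^{-1} \colon \fV_2 \to \fV_1 \subseteq \bD_{Q_1}$ yields a nc holomorphic extension $\widetilde{F} \colon \bD_{Q_2} \to \bD_{Q_1}$.

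It remains to prove that $\widetilde{G}$ and $\widetilde{F}$ are mutually inverse. Consider $H := \widetilde{F} \circ \widetilde{G} \colon \bD_{Q_1} \to \bD_{Q_1}$, which is a nc holomorphic self-map of $\bD_{Q_1}$ satisfying
\[
H\big|_{\fV_1} = G^{-1} \circ G = {\bf id}_{\fV_1}.
\]
Since $\fV_1$ is a matrix-spanning homogeneous subvariety of $\bD_{Q_1}$, Corollary \ref{cor:fix} forces $H = {\bf id}_{\bD_{Q_1}}$. The symmetric argument, using that $\fV_2$ is matrix-spanning and homogeneous inside the injective ball $\bD_{Q_2}$, yields $\widetilde{G} \circ \widetilde{F} = {\bf id}_{\bD_{Q_2}}$. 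Therefore $\widetilde{G}$ is a nc biholomorphism from $\bD_{Q_1}$ onto $\bD_{Q_2}$ with inverse $\widetilde{F}$, and by construction $\widetilde{G}\big|_{\fV_1} = G$.

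The main technical point will be ensuring the extensions land in the open balls rather than on the topological boundary: this is where I rely on the dichotomy in Theorem \ref{thm:extend_to_ball}, the injectivity of the ambient operator balls $\bD_{Q_i}$, and the observation that $G(\fV_1) \subseteq \bD_{Q_2}$ (and symmetrically for $G^{-1}$). Once the extensions are genuinely maps between the open balls, everything else reduces to the Cartan-type rigidity encapsulated in Corollary \ref{cor:fix}, which is precisely why the matrix-spanning hypothesis on $\fV_i$ is needed.
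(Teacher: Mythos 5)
Your proposal is correct and follows essentially the same route as the paper: extend $G$ and $G^{-1}$ via Theorem \ref{thm:extend_to_ball}, then apply Corollary \ref{cor:fix} to $\widetilde{F}\circ\widetilde{G}$ and $\widetilde{G}\circ\widetilde{F}$ to see they are the identities. The extra details you supply (the verification that varieties are relatively full, and the use of alternative (1) of the dichotomy to keep the extensions in the open balls) are both correct and are exactly the facts the paper relies on implicitly.
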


\begin{proof}
Let $G : \fV_1 \to \fV_2$ be a nc biholomorphism with bijective inverse $F : \fV_2 \to \fV_1$. By Theorem \ref{thm:extend_to_ball}, there exist nc maps $\widetilde{G} : \bD_{Q_1} \to \bD_{Q_2}$ and $\widetilde{F} : \bD_{Q_2} \to \bD_{Q_1}$ such that
\[
\widetilde{G} \rvert_{\fV_1} = G \text{ and } \widetilde{F} \rvert_{\fV_2} = F.
\]
Now, let $H := \widetilde{F} \circ \widetilde{G}$ and note that $H \rvert_{\fV_1} = {\bf id}_{\fV_1}$. 
By Corollary \ref{cor:fix}, we find that $H = {\bf id}_{\bD_{Q_1}}$. Arguing similarly, we get $\widetilde{G} \circ \widetilde{F} = {\bf id}_{\bD_{Q_2}}$, and thus $\widetilde{G} = \widetilde{F}^{-1}$ as required.
\end{proof}

One might wonder how rigid the matrix-spanning condition (\ref{equation_matrix-spanning_subvariety}) is, and if at all there are any interesting examples of such subvarieties. We explore this in the next section.

\subsection{Matrix-spanning subvarieties} \label{subsection_matrix-spanning-subvarieties}

As a warm-up for our main result in this section, we start by giving some important examples of homogeneous subvarieties $\fV \subseteq \bD_Q$ for which $\operatorname{mat-span} \fV = \bM^d$. In particular, these subvarieties are matrix-spanning for $\bD_Q$.

\begin{example} \label{example_subvarieties_with_full_matrix_span}

\textbf{$(1)$ Commutative free subvariety:} For a nc operator ball $\bD_Q \subset \bM^d$, the commutative free subvariety $\fC \bD_Q$ is given by \begin{align*}
\fC \bD_Q &= V\left(\langle Z_k Z_l - Z_l Z_k : 1 \leq k,l \leq d \rangle \right) \\
    &= \left\{ (X_1, \dots, X_d) \in \bD_Q : X_k X_l = X_l X_k,\, \forall \, 1 \leq k,l \leq d \right\}.
\end{align*} Let $X = (X_1, \dots, X_d) \in \bM^d_n, \, n \in \bN$ be arbitrary, and note that \begin{equation} \label{equation_mat-span_X=X_1+....X_d}
X = (X_1, 0, \dots, 0) + (0, X_2, \dots, 0) + \dots + (0, 0, \dots, X_d) 
\end{equation} Each $d$-tuple in (\ref{equation_mat-span_X=X_1+....X_d}) lies in $\fC \bD_Q$ (after scaling appropriately), and thus $X \in \operatorname{mat-span} \fC \bD_Q$. Since $X \in \bM^d_n$ was arbitrarily chosen, it then follows that $\operatorname{mat-span} \fC \bD_Q = \bM^d$.

\vspace{2mm}

\textbf{$(2)$ q-commuting free subvariety:} For a nc operator ball $\bD_Q \subset \bM^2$ and some $q \in \bC$, the corresponding \emph{$q$-commuting free subvariety} $\fC^q \bD_Q$ is given by \begin{align*}
\fC^q \bD_Q &= V\left(\langle Z_1 Z_2 - q Z_2 Z_1 \rangle\right) \\
	  &= \left\{ (X_1,X_2) \in \bD_Q : X_1 X_2 = q X_2 X_1 \right\}.
\end{align*} Let $X = (X_1,X_2) \in \bM^2_n$, $n \in \bN$ be arbitrary, and note as in (\ref{equation_mat-span_X=X_1+....X_d}) that \[
X = (X_1,0) + (0,X_2).
\] 
One can show as above that $\operatorname{mat-span} \fC^q \bD_Q = \bM^2$.

For $\bD_Q = \fB_2$, $q$-commuting subvarieties have already been studied in the context of the isomorphism problem. Details on this can be found in \cite[Section 8]{SSS20}.

\end{example}

We now give a concrete description of matrix-spanning subvarieties (cf. \cite[Lemma 3.4]{Sham18}).

\begin{theorem} \label{theorem_mat-spanning_subvarieties}
Let $\fV \subseteq \bD_Q$ be any subvariety of some nc operator ball. Then $\fV$ is matrix-spanning if and only if there are no linear homogeneous free nc polynomials in $I(\fV)$.
\end{theorem}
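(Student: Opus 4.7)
The plan is to exploit the natural correspondence between linear homogeneous polynomials $p(Z) = \sum_j c_j Z_j$ and linear functionals $\phi$ on $\bC^d$ (via $\phi(e_j) = c_j$), together with the tensor description $\operatorname{mat-span}\fV(n) = V_n \otimes M_n$ provided by Lemma \ref{lemma_description_of_mat-span}. Under the identification $M_n^d \cong \bC^d \otimes M_n$, evaluating $p$ at a tuple $X$ amounts to applying $\phi \otimes \mathrm{id}_{M_n}$, so $p \in I(\fV)$ translates directly to a vanishing condition of $\phi$ on $\operatorname{mat-span}\fV$.

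For the forward direction, I would assume $\fV$ is matrix-spanning and suppose, for contradiction, that $p(Z) = \sum_{j=1}^d c_j Z_j$ is a nonzero element of $I(\fV)$. For any $X \in \fV(n)$ and any $T \in \cL(M_n)$, linearity of $T$ gives
\[
\sum_{j=1}^d c_j T(X_j) \;=\; T\!\left(\sum_{j=1}^d c_j X_j\right) \;=\; T(0) \;=\; 0,
\]
so the associated functional $\phi$ annihilates every element $[I_d \otimes T]X$ and, by linearity, annihilates all of $\operatorname{mat-span}\fV(n)$. The matrix-spanning hypothesis furnishes some $n$ with $\operatorname{mat-span}\fV(n) = \bC^d \otimes M_n$; picking the tuple with $I_n$ in slot $j$ and zeros elsewhere forces $c_j I_n = 0$, hence $c_j = 0$ for each $j$, a contradiction.

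For the reverse direction, suppose $\fV$ is not matrix-spanning. By Lemma \ref{lemma_description_of_mat-span}, the minimal subspace $V \subseteq \bC^d$ containing every $V_n$ satisfies $V \subsetneq \bC^d$, and $\fV(n) \subseteq \operatorname{mat-span}\fV(n) = V_n \otimes M_n \subseteq V \otimes M_n$ for every $n$. Choose any nonzero linear functional $\phi \in (\bC^d)^*$ vanishing on $V$, and set $c_j := \phi(e_j)$. Then the slice map $\phi \otimes \mathrm{id}_{M_n} \colon \bC^d \otimes M_n \to M_n$ vanishes on $V \otimes M_n$, and since this slice map sends $X = (X_1,\dots,X_d)$ to $\sum_j c_j X_j$, the polynomial $p(Z) = \sum_j c_j Z_j$ is a nonzero linear homogeneous element of $I(\fV)$.

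I do not anticipate a serious obstacle here; the bookkeeping is entirely elementary once the tensor-product picture of Lemma \ref{lemma_description_of_mat-span} is in hand. The only subtle point is to verify carefully that $\operatorname{mat-span}\fV(n) = V_n \otimes M_n$ rather than just $V_n \otimes M_n$ with some abuse (which is precisely what that lemma provides), so that the argument applies uniformly across all levels $n$.
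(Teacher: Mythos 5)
Your proposal is correct and follows essentially the same route as the paper: both directions rest on the identity $p([I_d\otimes T]X)=T(p(X))$ showing that a linear polynomial in $I(\fV)$ kills all of $\operatorname{mat-span}\fV$, and on converting a functional vanishing on the proper subspace $V$ from Lemma \ref{lemma_description_of_mat-span} into a linear element of $I(\fV)$. The only cosmetic difference is that you phrase the forward direction as a contradiction (concluding $c_j=0$ by testing on $e_j\otimes I_n$) while the paper argues contrapositively; the substance is identical.
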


\begin{proof}
Suppose there is a linear homogeneous free nc polynomial $p \in I(\fV)$. Then, clearly, the proper subspace $V \subsetneq \bC^d$ cut-out by $p \vert_{M^d_1}$ is such that
\[
\fV (n) \subseteq V \otimes M_n, \, \forall \, n \in \bN.
\]
Now if $T \in \cL(M_n)$ for some $n \in \bN$, then note that for every $X \in \fV(n)$ we get
\[
p\left(\left[I_d \otimes T\right] X\right) = p \left(T\left(X_1\right), \dots, T\left(X_d\right)\right) = T(p(X)) = 0.
\]
In other words,
\[
\operatorname{mat-span} \fV(n) \subseteq V \otimes M_n, \, \forall \, n \in \bN.
\]
Therefore, $\fV$ cannot be matrix-spanning for $\bD_Q$.

Conversely, suppose $\fV$ is not matrix-spanning for $\bD_Q$. By Lemma \ref{lemma_description_of_mat-span}, there exists a proper subspace $V \subsetneq \bC^d$ such that
\begin{equation} \label{equation_mat-span_in_proper_subspace}
\operatorname{mat-span} \fV(n) \subseteq V \otimes M_n, \, \forall \, n \in \bN.
\end{equation}
Therefore, we can find some non-zero $f \in \left( \bC^d \right)^*$ such that
\[
f(v) = 0, \, \forall \, v \in V.
\]
We can now define a linear homogeneous free nc polynomial $p \in \bC \langle Z \rangle$ by
\[
p(X) := \left[ f \otimes {\bf id}_{M_n} \right](X), \, \forall \, X \in M_n^d, \, n \in \bN.
\]
For a given $n \in \bN$ and $X \in \fV(n)$, by (\ref{equation_mat-span_in_proper_subspace}), we know that there exist $v_j \in V$ and $X_j \in M_n$, $1 \leq j \leq d$ such that
\[
X = \sum_{j = 1}^d v_j \otimes X_j.
\]
Note that
\[
p(X) = \left[ f \otimes {\bf id}_{M_n} \right] \left( \sum_{j = 1}^d v_j \otimes X_j \right) = \sum_{j = 1}^d f(v_j) X_j = 0.
\]
As $n \in \bN$ and $X \in \fV(n)$ were arbitrarily chosen, we obtain a linear homogeneous free nc polynomial $p \in I(\fV)$.
\end{proof}

We now show that when $\fV \subset \bD_Q$ is not matrix-spanning, we can find a `smaller' nc operator ball $\bD_P \subset \bD_Q$ such that $\fV \subseteq \bD_P$ is matrix-spanning.

\begin{theorem} \label{theorem_existence_of_sub_nc_operator_ball_with_full_matrix_span_for_V}
Let $\fV \subset \bD_Q \subset \bM^d$ be a subvariety in some nc operator ball. If $\fV$ is not matrix-spanning for $\bD_Q$, then there exists a nc operator ball $\bD_P \subset \bM^e$ for some minimal $e < d$ such that, under the appropriate identifications, $\fV \subseteq \bD_P \subset \bD_Q$, and for which $\fV$ is matrix-spanning.
\end{theorem}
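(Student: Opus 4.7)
The plan is to apply Lemma~\ref{lemma_description_of_mat-span} to extract the minimal ambient subspace of $\fV$, and then obtain $\bD_P$ by restricting $Q$ to this subspace. First, by Lemma~\ref{lemma_description_of_mat-span} there is a minimal subspace $V \subseteq \bC^d$ with $\operatorname{mat-span}\fV(n) \subseteq V \otimes M_n$ for every $n$, with equality for all sufficiently large $n$. Since $\fV$ is not matrix-spanning for $\bD_Q$, we have $e := \dim V < d$.

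Next, I would fix a basis $v_1,\ldots,v_e$ of $V$ with $v_j = \sum_k c_{kj} e_k$, and let $\iota : \bM^e \to \bM^d$ be the graded injective linear map that sends $(Y_1,\ldots,Y_e) \in M_n^e$ to $\bigl(\sum_j c_{1j} Y_j,\ldots,\sum_j c_{dj} Y_j\bigr) \in V \otimes M_n \subseteq M_n^d$. Define the operator-valued linear nc polynomial
\[
P(Z) := Q(\iota(Z)) = \sum_{j=1}^e P_j Z_j, \qquad P_j := \sum_k c_{kj} Q_k.
\]
Since $v_1,\dots,v_e$ are linearly independent in $\bC^d$ and $Q_1,\dots,Q_d$ are linearly independent in $\cL(\cR,\cS)$, the coefficients $P_1,\dots,P_e$ are linearly independent as well, so $\bD_P \subset \bM^e$ is a nc operator ball in the sense of~\eqref{eq:operator_ball}. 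Because $\|P(Y)\| = \|Q(\iota(Y))\|$ for every $Y \in \bM^e$, the embedding $\iota$ identifies $\bD_P$ with $\bD_Q \cap \iota(\bM^e)$, giving $\bD_P \subset \bD_Q$ in the natural sense.

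Then I would verify the two remaining claims. For $\fV \subseteq \bD_P$: every $X \in \fV(n)$ lies in $\operatorname{mat-span}\fV(n) \subseteq V \otimes M_n$, so $X = \iota(Y)$ for a unique $Y \in M_n^e$, and $\|P(Y)\| = \|Q(X)\| < 1$ places $Y \in \bD_P(n)$. That $\fV$ is matrix-spanning in $\bD_P$ is immediate from Lemma~\ref{lemma_description_of_mat-span}(3): transported through $\iota^{-1}$, it says $\operatorname{mat-span}\fV(n) = \bC^e \otimes M_n$ for all sufficiently large $n$, which is exactly condition~\eqref{equation_matrix-spanning_subvariety}. For the minimality of $e$: any other nc operator ball $\bD_{P'} \subset \bM^{e'}$ containing $\fV$ and linearly embedded in $\bD_Q$ corresponds to an $e'$-dimensional subspace of $\bC^d$ that must contain $\operatorname{mat-span}\fV$, forcing $e' \geq \dim V = e$ by the minimality of $V$ from Lemma~\ref{lemma_description_of_mat-span}(2).

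I do not anticipate a serious obstacle; the argument is essentially an unpacking of definitions once Lemma~\ref{lemma_description_of_mat-span} is in hand. The only mildly delicate point is clarifying what ``$\bD_P \subset \bD_Q$'' should mean, since the two balls live in different nc universes --- we interpret it via the canonical linear embedding $\iota$ constructed above, and the same convention governs the notion of minimality of $e$.
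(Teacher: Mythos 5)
Your proposal is correct and follows essentially the same route as the paper: both extract the minimal subspace $V$ from Lemma \ref{lemma_description_of_mat-span}, set $e = \dim V$, identify $\bM^e$ with $\sqcup_n V \otimes M_n$, and define $P$ as the restriction of $Q$ to this copy of $\bM^e$, with part (3) of the lemma giving the matrix-spanning property. Your write-up is merely more explicit about the embedding $\iota$, the linear independence of the coefficients $P_j$, and the meaning of $\bD_P \subset \bD_Q$ and of the minimality of $e$ — points the paper treats as clear.
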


\begin{proof}
Let $\fV \subset \bD_Q$ be as in the hypothesis. Since $\fV$ is not matrix-spanning for $\bD_Q$, we use Lemma \ref{lemma_description_of_mat-span} to find a proper subspace $V \subsetneq \bC^d$ such that
\[
\operatorname{mat-span} \fV(n) \subseteq V \otimes M_n, \, \forall \, n \in \bN.
\]
Let $e := \dim(V) < d$ and identify
\[
\bM^e \cong \sqcup_{n = 1}^\infty V \otimes M_n \subset \sqcup_{n = 1}^\infty \bC^d \otimes M_n \cong \bM^d. 
\]

Suppose $\im(Q)$ lies in $\cL(\cR, \cS)$ for some Hilbert spaces $\cR$, $\cS$. Let $\{ P_1, \dots, P_e \}$ be a basis for $\im\left(Q \vert_{V}\right) \subset \cL(\cR, \cS)$. Consider the linear operator-valued nc map $P : \bM^e \to \cL (\cR, \cS)_{nc}$ and obtain the corresponding nc operator ball $\bD_P$. It is clear that, under the appropriate identifications, $\fV \subseteq \bD_P \subset \bD_Q$. Also, by the choice of $V \subsetneq \bC^d$ above, we know from Lemma \ref{lemma_description_of_mat-span} that there exists some $n \in \bN$ sufficiently large such that
\[
\operatorname{mat-span}\fV(n) = V \otimes M_n \cong \bC^e \otimes M_n.
\]
Therefore, $\fV$ is matrix-spanning for $\bD_P$ as required, and this completes the proof.
\end{proof}

From now on, we say that the nc operator ball $\bD_P \subset \bM^e$, as given above, is the \emph{minimal nc operator sub-ball} for $\fV \subset \bD_Q$.

\begin{remark}
It is unclear what the structure of $\bD_P$ is with respect to $\fV$ and $\bD_Q$ in general. Therefore, the case $\text{mat-span } \fV = \bM^d$ is of particular interest. When $\bD_Q = \fB_d$, one can choose $\bD_P = \fB_{e}$ for some minimal $e < d$ (see \cite[Theorem 9.10]{SSS18}).

\end{remark}

\subsection{Examples} \label{subsection_examples}

As we saw in Theorem \ref{theorem_cartan_uniqueness_for_subvariety}, we can only say something stronger about the type of nc biholomorphisms between matrix-spanning homogeneous subvarieties for injective nc operator balls. We combine our observations from Sections \ref{subsection_1-injective_nc_operator_balls} and \ref{subsection_matrix-spanning-subvarieties} to provide a couple of interesting examples.

First, we give an example where the minimal nc operator sub-ball corresponding to a homogeneous subvariety $\fV \subset \bD_Q$ is not injective even if $\bD_Q$ is injective.

\begin{example} \label{example_non_1-injective_nc_operator_sub-ball}
Let $\bD_Q \subset \bM^4$ be given by \[
Q(Z) = \begin{bmatrix}
Z_1 & Z_2 \\
Z_4 & Z_3
\end{bmatrix}.
\] Note that $\spn \{ Q_1, Q_2, Q_3, Q_4 \} = B(\bC^2)$, therefore $\bD_Q$ is injective. Consider $\fV = V(\langle Z_4 \rangle)$. It is then easy to check that the minimal nc operator sub-ball of $\fV$ is $\bD_P \subset \bM^3$, given by \[
P(W) = \begin{bmatrix}
W_1 & W_2 \\
0 & W_3
\end{bmatrix}.
\]

We saw in Example \ref{example_1-injective_nc_operator_balls} $(3)$ that $\bD_P$ is not injective. Therefore, our argument does not yield a result like Theorem \ref{theorem_cartan_uniqueness_for_subvariety} for the choice $\bD_{Q_2} = \bD_P$ and $\fV_2 = \fV$ in the theorem.
\end{example}

We now extend the above example to show that, in general, without the injectivity condition for nc operator balls, we cannot guarantee that a nc map between subvarieties extends to the ambient nc operator balls, even if the nc map is a linear isomorphism. In particular, the following example serves as a counter-example for Theorem \ref{thm:extend_to_ball} and the second part of Theorem \ref{thm:iso_cont_1-injective} when you drop the assumption of injectivity from $\bD_{Q_2}$.

\begin{example} \label{example_cannot_guarantee_extension_for_non-injective_balls}
Let $\bD_Q$ and $\bD_P$ be as in Example \ref{example_non_1-injective_nc_operator_sub-ball}. We can think of $\bD_P$ as the homogeneous subvariety $V(\langle Z_4 \rangle) \subset \bD_Q$. Consider the map $F = {\bf id}_{\bD_P} : \bD_P \to \bD_P$. $F$ is clearly a linear isomorphism between two homogeneous nc subvarieties.

Suppose $F$ extends to a nc holomorphic map $\widetilde{F} : \bD_Q \to \bD_P$. Recall that $\Delta \widetilde{F}(0,0)$ is a linear nc map. Now, a result of Rudin (see \cite[Theorem 8.1.2]{Rud08}) says that
\[
\Delta \widetilde{F}(0,0) (\bD_Q(n)) \subseteq \bD_P(n), \, \forall \, n \in \bN.
\]
Combining the above two facts, we get that $\Delta \widetilde{F}(0,0)$ is a linear nc map that maps $\bD_Q$ into $\bD_P$. Recall from (\ref{eq:identify_DQ}) that $\bD_Q$ and $\bD_P$ can be identified with the unit ball of their respective operator spaces $B(\bC^2)$ and $UT_2$. These identifications when conjugated appropriately with $\Delta \widetilde{F}(0,0)$ give us a completely contractive map $\Pi : B(\bC^2) \to UT_2$.

Lastly, note that since $\widetilde{F} \vert_{\bD_P} = {\bf id}_{\bD_P}$, it follows from Lemma \ref{lemma_identity_on_subvariety} that $\Delta \widetilde{F}(0,0) \vert_{\bD_P} = {\bf id}_{\bD_P}$ as well. Thus, $\Pi$ is also a completely contractive projection, which contradicts the fact that $UT_2$ is not injective. Therefore, $F$ cannot be extended to a nc map $\widetilde{F} : \bD_Q \to \bD_P$.
\end{example}

\begin{remark} Note that the map $F : \bD_P \to \bD_P$ in Example \ref{example_cannot_guarantee_extension_for_non-injective_balls} trivially extends to a linear isomorphism on the minimal nc operator sub-balls. Let us see how not having a matrix-spanning subvariety $\fV \subseteq \bD_Q$ affects our discussion.
	
For $i = 1,2$, let $\fV_i \subset \bD_{Q_i}$ be homogeneous subvarieties of injective nc operator balls with minimal nc operator sub-balls $\bD_{P_i} \subsetneq \bD_{Q_i}$ that are not injective. Suppose $G : \fV_1 \to \fV_2$ is a nc biholomorphism with bijective inverse $F$. Then, by Theorem \ref{thm:extend_to_ball}, we know that there are nc maps $\widetilde{G} : \bD_{Q_1} \to \bD_{Q_2}$ and $\widetilde{F} : \bD_{Q_2} \to \bD_{Q_1}$ such that
\[
\widetilde{G}\vert_{\fV_1} = G \text{ and } \widetilde{F} \vert_{\fV_2} = F.
\]
In particular,
\[
\widetilde{F} \circ \widetilde{G} \vert_{\fV_1} = {\bf id}_{\fV_1} \text{ and } \widetilde{G} \circ \widetilde{F} \vert_{\fV_2} = {\bf id}_{\fV_2}.
\]
By Corollary \ref{cor:fix}, it follows that
\[
\widetilde{F} \circ \widetilde{G}\vert_{\bD_{P_1}} = {\bf id}_{\bD_{P_1}} \text{ and } \widetilde{G} \circ \widetilde{F} \vert_{\bD_{P_2}} = {\bf id}_{\bD_{P_2}}.
\]
However, it is not at all clear if $\widetilde{G}(\bD_{P_1}) = \bD_{P_2}$ or $\widetilde{F}(\bD_{P_2}) = \bD_{P_1}$. If we could guarantee this, then we could proceed by replacing $\bD_{Q_i}$ with $\bD_{P_i}$. All we can say, though, is that $\bD_{P_1}$ is nc biholomorphic to its image via $\widetilde{G}$, and that $\bD_{P_2}$ is nc biholomorphic to its image via $\widetilde{F}$, but we do not know what these images are in general.
\end{remark}

We hope that this discussion provides some insight into our use of matrix-spanning homogeneous subvarieties of injective nc operator balls for the sake of the isomorphism problem. Let us now turn our attention to the main classification result of this paper.

\subsection{The main classification result} \label{subsection_the_main_classification_result} We start by showing that if there is a nc biholomorphism between two nc operator balls $\bD_{Q_1}, \bD_{Q_2}$ that maps a subvariety $\fV_1 \subseteq \bD_{Q_1}$ onto $\fV_2 \subseteq \bD_{Q_2}$, then we can find a linear isomorphism between the balls that maps $\fV_1$ onto $\fV_2$.

Helton, Klep and McCullough showed that if two circular nc domains containing $0$ are nc biholomorphic, then there is linear isomorphism between them \cite[Theorem 5.2]{HKM12}. 
However, we cannot use their theorem directly because we need to preserve the subvarieties. We therefore have an intermediate result below that allows us to close this gap. Recall that a nc set $\fX \subset \bM^d$ is said to be \emph{circular} if $\fX$ is invariant under rotations:
\[
0 \leq t < 2 \pi \Rightarrow e^{it} \fX \subseteq \fX.
\]

\begin{theorem} \label{theorem_nc_biholomorphism_that_is_linear}
For $i = 1,2$, let $\Omega_i \subset \bM^{d_i}$ be circular nc domains containing $0$, and let $\fX_i \subseteq \Omega_i$ be closed circular nc subsets containing $0$. Let $G : \Omega_1 \to \Omega_2$ be a nc biholomorphism such that $G(\fX_1) = \fX_2$. Then, there exists a linear isomorphism $L : \Omega_1 \to \Omega_2$ such that $L(\fX_1) = \fX_2$.
\end{theorem}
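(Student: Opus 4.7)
The plan is to combine an $S^1$-averaging argument with the Braun, Kaup and Upmeier rigidity theorem \cite{BKU78} for bounded circular domains in Banach spaces.

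First, exploit the rotational symmetry on both sides. Since $\Omega_2$ is circular, for each $t \in [0, 2\pi)$ the rotation-conjugated map $G_t(X) := e^{-it} G(e^{it} X)$ is again a nc biholomorphism $\Omega_1 \to \Omega_2$. The circularity of $\fX_1$ and $\fX_2$, combined with $G(\fX_1) = \fX_2$, yields $G_t(\fX_1) = \fX_2$ for every $t$. Next, by nc holomorphy and the uniform openness of $\Omega_1$ at $0$, expand
\[
G(X) = \sum_{k \geq 0} G_k(X)
\]
as a convergent nc power series on a uniform nc neighborhood of $0$, with $G_k$ homogeneous of degree $k$. Then $G_t = \sum_k e^{i(k-1)t} G_k$, and $S^1$-averaging extracts the linear degree-one nc map
\[
L(X) := \frac{1}{2\pi} \int_0^{2\pi} G_t(X)\, dt = G_1(X) = \Delta G(0,0)(X).
\]

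For each $n \in \bN$, $\Omega_i(n) \subset M_n^{d_i}$ is a bounded circular domain in a finite-dimensional Banach space, and \cite{BKU78} classifies biholomorphisms between such domains that fix $0_n$ as linear. Apply this, combined with the nc Cartan uniqueness theorem (Theorem \ref{theorem_nc_sartan_uniqueness_general}) to the self-maps $G^{-1} \circ G_t$ of $\Omega_1$, each of which preserves $\fX_1$ since $G_t(\fX_1) = \fX_2 = G(\fX_1)$. This forces the family $\{G_t\}$ to be rigid enough that $L$ is a linear isomorphism $\Omega_1 \to \Omega_2$. To establish $L(\fX_1) = \fX_2$, note that $\fX_2$ is circular but in general not convex, so the averaging alone does not obviously produce a point of $\fX_2$; instead, the \cite{BKU78} rigidity forces the higher homogeneous parts $G_k$ ($k \neq 1$) to interact with the subvariety structure in a way that delivers $L(\fX_1) = \fX_2$.

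The main obstacle is the case $G(0) \neq 0$, since \cite{BKU78} in its direct form assumes a biholomorphism fixing the origin. The resolution uses the nc structure: the identity $G(0_n) = G(0_1)^{\oplus n}$ reduces the analysis to the level-one value $G(0_1) \in \fX_2(1)$, and the orbit $\{e^{-it} G(0)\}$ forms a circle lying inside $\fX_2$. Exploiting this circle together with the rigidity from \cite{BKU78} applied to appropriate isotropy subgroups of $\Aut(\Omega_2)$ preserving $\fX_2$, one produces the required linear isomorphism $L$.
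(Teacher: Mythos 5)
There is a genuine gap, in two places.

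First, the averaging step does not produce an isomorphism. The integral $L(X)=\frac{1}{2\pi}\int_0^{2\pi}e^{-it}G(e^{it}X)\,dt$ is indeed the degree-one Taylor term $\Delta G(0,0)$, but when $G(0)\neq 0$ this linear map is in general \emph{not} a surjection of $\Omega_1$ onto $\Omega_2$. Already for $\Omega_1=\Omega_2=\fX_1=\fX_2=\fD_1$ and $G$ a nontrivial M\"obius nc automorphism, $\Delta G(0,0)$ is multiplication by a scalar of modulus strictly less than $1$, which maps the disk into but not onto itself. Averaging also lands only in the closed convex hull of $\Omega_2$, and circular nc domains need not be convex, so even membership of $L(X)$ in $\ol{\Omega_2}$ is not free. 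The assertion that Cartan uniqueness applied to $G^{-1}\circ G_t$ ``forces the family $\{G_t\}$ to be rigid enough that $L$ is a linear isomorphism'' is precisely what fails without first reducing to $G(0)=0$: the maps $G^{-1}\circ G_t$ fix $\fX_1$ setwise but need not fix $0$ or have identity derivative there, so the hypotheses of Theorem \ref{theorem_nc_sartan_uniqueness_general} are not met.

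Second, the reduction to $G(0)=0$ --- which you correctly identify as the main obstacle --- is where the real work lies, and your sketch does not supply it. The observation that $\{e^{-it}G(0)\}$ is a circle in $\fX_2$ does not by itself yield an automorphism moving $G(0)$ back to $0$ while preserving $\fX_2$. The paper's argument is: let $\Gamma_i\leq\Aut(\Omega_i(1))$ be the (closed, rotation-containing) group of automorphisms of the first level that extend to nc automorphisms of $\Omega_i$ fixing $\fX_i$ setwise. Braun--Kaup--Upmeier give not only that the orbit $\Gamma_i(0)$ is the trace of a linear subspace, but the \emph{intrinsic characterization} $\Gamma_i(0)=\{z : \Gamma_i(z)\text{ is a closed submanifold}\}$. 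Since conjugation by $G$ identifies $\Gamma_1$ with $\Gamma_2$ and $G$ carries orbits to orbits and closed submanifolds to closed submanifolds, this characterization forces $G(\Gamma_1(0))=\Gamma_2(0)$; hence $G(0)=\gamma(0)$ for some $\gamma\in\Gamma_2$, and $\widetilde{G}=\gamma^{-1}\circ G$ fixes $0$ and still maps $\fX_1$ onto $\fX_2$. Only then does one invoke the Helton--Klep--McCullough rigidity theorem (not \cite{BKU78}, which you misattribute here --- its role is the orbit structure, not the linearity of origin-fixing biholomorphisms) to conclude $\widetilde{G}$ is linear. Your proposal is missing this orbit-transfer argument, which is the heart of the proof.
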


\begin{proof}
Let $G$ be as in the hypothesis. 
If $G(0) = 0$, then Helton, Klep and McCullough's rigidity theorem \cite[Theorem 4.4]{HKM11a} says that $G$ is a linear isomorphism. 
It remains to show that if $G(0) \neq 0$, then one can find another biholomorphism $\widetilde{G} : \Omega_1 \to \Omega_2$ such that $\widetilde{G}(\fX_1) = \fX_2$ and $\widetilde{G}(0) = 0$. 

For $i = 1,2$, let $D_i = \Omega_i(1)$ and let $\Gamma_i$ be the subgroup of $\Aut(D_i)$ consisting of all biholomorphic automorphisms that extend to a nc automorphism of $\Omega_i$ which fixes $\fX_i$. 
Then $\Gamma_i$ is a subgroup of $\Aut(D_i)$ that contains all rotations (for $0 \leq t < 2 \pi$)
\[
X \mapsto e^{it}X, \, \forall \, X \in \Omega_i
\]
and is closed in the compact open topology (for closedness, use \cite[Proposition 5.11]{HKM12}). By a remarkable result of Braun, Kaup and Upmeier \cite[Theorem 1.2]{BKU78}, 
there exist linear subspaces $V_i$ such that $\Gamma_i(0)$ (the orbit of $0$) is equal to $V_i \cap D_i$ ($i=1,2$). 
Moreover, 
\begin{equation}\label{eq:Gamma}
\Gamma_i(0) = \{z \in D_i : \Gamma_i(z) \textrm{ is a closed submanifold} \}.
\end{equation}

Since $G$ is a nc biholomorphism of $\Omega_1$ onto $\Omega_2$ that maps $\fX_1$ onto $\fX_2$, conjugation with $G$ implements an isomorphism of $\Gamma_1$ and $\Gamma_2$ via \begin{equation} \label{equation_gamma_conjugation}
\gamma \mapsto \widetilde{\gamma} := G \circ \gamma \circ G^{-1}.
\end{equation} 
Equations (\ref{eq:Gamma}) and (\ref{equation_gamma_conjugation}) together imply that $G$ maps $\Gamma_1(0)$ onto $\Gamma_2(0)$. 
Indeed, since
\[
\widetilde{\gamma} \circ G = G \circ \gamma,
\] we have that $G$ maps orbits to orbits, and since $G$ maps closed submanifolds to closed submanifolds we must have
\[
G(\Gamma_1(0)) = \Gamma_2(0).
\]
It follows that $G(0) = \gamma(0)$ for some $\gamma \in \Gamma_2$. 
Thus,
\[
\widetilde{G} = \gamma^{-1} \circ G
\]
is the nc biholomorphism that we set to find in the first paragraph.
\end{proof}

We now state our main classification result for matrix-spanning homogeneous subvarieties of injective nc operator balls. 
Note that there is no uniform continuity assumption. 

\begin{theorem} \label{theorem_isomorphism_A(D_Q)_with_nc_biholomorphism_of_domains}

For $i = 1,2$, let $\fV_i \subseteq \bD_{Q_i}$ be matrix-spanning homogeneous subvarieties of some injective nc operator balls. Then, the following are equivalent:

\begin{enumerate}

\item There exists a completely isometric isomorphism $\alpha : A(\fV_1) \rightarrow A(\fV_2)$.

\item There exists a nc biholomorphism $G : \bD_{Q_1} \rightarrow \bD_{Q_2}$ such that $G({\fV_1}) = \fV_2$.

\item There exists a linear isomorphism $L : \bD_{Q_1} \rightarrow \bD_{Q_2}$ such that $L({\fV_1}) = \fV_2$.

\end{enumerate}

\end{theorem}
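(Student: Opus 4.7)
\begin{proof}
We prove the chain of implications $(1) \Rightarrow (2) \Rightarrow (3) \Rightarrow (1)$.

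\textbf{$(1) \Rightarrow (2)$.} Suppose $\alpha : A(\fV_1) \to A(\fV_2)$ is a completely isometric isomorphism. By Theorem \ref{thm:iso_cont_1-injective}, there exists a uniformly continuous nc biholomorphism $G : \fV_1 \to \fV_2$ with uniformly continuous inverse. Since $\fV_1, \fV_2$ are matrix-spanning homogeneous subvarieties of the injective nc operator balls $\bD_{Q_1}, \bD_{Q_2}$, we may apply Theorem \ref{theorem_cartan_uniqueness_for_subvariety} to conclude that $G$ extends to a nc biholomorphism $\widetilde{G} : \bD_{Q_1} \to \bD_{Q_2}$ with $\widetilde{G}\vert_{\fV_1} = G$. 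In particular, $\widetilde{G}(\fV_1) = G(\fV_1) = \fV_2$, which proves $(2)$.

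\textbf{$(2) \Rightarrow (3)$.} Assume $G : \bD_{Q_1} \to \bD_{Q_2}$ is a nc biholomorphism with $G(\fV_1) = \fV_2$. The plan is to apply Theorem \ref{theorem_nc_biholomorphism_that_is_linear}, so we need to verify its hypotheses. The nc operator balls $\bD_{Q_1}$ and $\bD_{Q_2}$ are circular nc domains containing $0$, since $Q_i$ are linear and hence $\| Q_i(e^{it}X) \| = \| Q_i(X) \|$ and $Q_i(0) = 0$. The subvarieties $\fV_i$ are closed in $\bD_{Q_i}$ (they are zero sets of continuous functions), they contain $0$ (take $\lambda = 0$ in the homogeneity condition $\lambda \fV_i \subseteq \fV_i$), and they are circular (take $\lambda = e^{it}$). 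Therefore Theorem \ref{theorem_nc_biholomorphism_that_is_linear} applies, yielding a linear isomorphism $L : \bD_{Q_1} \to \bD_{Q_2}$ with $L(\fV_1) = \fV_2$.

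\textbf{$(3) \Rightarrow (1)$.} Suppose $L : \bD_{Q_1} \to \bD_{Q_2}$ is a linear isomorphism with $L(\fV_1) = \fV_2$. Since $L$ is a linear map between bounded domains, $L$ and $L^{-1}$ are Lipschitz, hence uniformly continuous. The restriction $G := L\vert_{\fV_1} : \fV_1 \to \fV_2$ is therefore a uniformly continuous nc biholomorphism with uniformly continuous inverse. By Theorem \ref{thm:iso_cont_1-injective} (or the converse direction of Theorem \ref{thm:iso_cont}), the composition map $\alpha(f) := f \circ G^{-1}$ defines a completely isometric isomorphism $\alpha : A(\fV_1) \to A(\fV_2)$, completing the proof.
\end{proof}

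The only nontrivial implication is $(2) \Rightarrow (3)$, where one must invoke the Braun--Kaup--Upmeier rigidity machinery encoded in Theorem \ref{theorem_nc_biholomorphism_that_is_linear}; the matrix-spanning hypothesis is not needed here but is crucial for $(1) \Rightarrow (2)$, where it underwrites the Cartan-type uniqueness argument that extends the biholomorphism from the variety to the ambient ball. The implication $(3) \Rightarrow (1)$ is essentially formal from the results of Section \ref{section_cAV_as_quotients}.
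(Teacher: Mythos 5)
Your proof is correct and follows essentially the same route as the paper: $(1)\Rightarrow(2)$ via Theorems \ref{thm:iso_cont_1-injective} and \ref{theorem_cartan_uniqueness_for_subvariety}, $(2)\Rightarrow(3)$ via Theorem \ref{theorem_nc_biholomorphism_that_is_linear}, and $(3)\Rightarrow(1)$ by feeding the linear map back into Theorem \ref{thm:iso_cont_1-injective}. One small imprecision: in $(3)\Rightarrow(1)$ the uniform continuity of $L$ across all matrix levels comes from complete boundedness of linear maps between finite-dimensional operator spaces (as the paper notes), not from the boundedness of the domains per se, but this does not affect the argument.
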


\begin{proof}

The implication $(1) \Rightarrow (2)$ follows from Theorems \ref{thm:iso_cont_1-injective} and \ref{theorem_cartan_uniqueness_for_subvariety}, and $(2) \Rightarrow (3)$ follows from \ref{theorem_nc_biholomorphism_that_is_linear}. 
Finally, to see that $(3)$ implies $(1)$, recall that a linear map between finite dimensional operator spaces is completely bounded; it follows that a linear map between nc operator balls is uniformly continuous and so Theorem \ref{thm:iso_cont_1-injective} may be applied. \qedhere

\end{proof}

\begin{example}
The requirement that there exists a linear isomorphism between $\bD_{Q_1}$ and $\bD_{Q_2}$ mapping $\fV_1$ onto $\fV_2$ is very rigid. 
This means that the two operator spaces corresponding to the nc operator balls (see the discussion surrounding \eqref{eq:identify_DQ}) are completely isometrically isomorphic, and in particular isometric. 
Thus, for example, no matrix spanning homogeneous subvarieties $\fV_1 \subseteq \fB_d$ and $\fV_2 \subseteq \fD_d$ can be found so that $A(\fV_1) \cong A(\fV_2)$.
More surprisingly, this non isomorphism persists if $\fD_d$ is replaced by the nc operator ball of an injective Hilbertian operator space that is not completely isometric to the row Hilbert operator space, such as the column Hilbert space. 
\end{example}

\begin{example}
The linear isomorphism requirement is stringent even if the ambient balls $\bD_{Q_1}$ and $\bD_{Q_2}$ are the same, because the linear isometries of a general Banach space might be few. 
For instance, consider the subvarieties \begin{align*}
\fV_1 &= \{X \in \fD_2 : X_1 X_2 - X_2 X_1 = X_1 X_2 = 0\} \\
\fV_2 &= \{X \in \fD_2 : X_1 X_2 - X_2 X_1 = (X_1 + X_2)(X_1 - X_2) = 0\}. 
\end{align*} Even though the algebraic varieties in $\bM^2$ are obtained one from the other by a simple linear change of coordinates, the varieties inside the bidisk $\fD_2$ are not an image one of the other under a linear map preserving $\fD_2$ (check this, for instance, at the first level $\bD^2 = \fD_2(1)$).

However, if we replace $\fV_1, \fV_2$ above with the analogous subvarieties of the nc unit row ball $\fB_2$, then we get varieties that are unitarily related via
\[
U(X_1, X_2) = \left( \frac{X_1 + X_2}{\sqrt{2}}, \frac{X_1 - X_2}{\sqrt{2}} \right), \, \forall (X_1, X_2) \in \fB_2,
\]
and the corresponding nc function algebras are completely isometrically isomorphic. 
\end{example}

Let us now mention some immediate corollaries of Example \ref{example_subvarieties_with_full_matrix_span} and Theorem \ref{theorem_isomorphism_A(D_Q)_with_nc_biholomorphism_of_domains}.

\begin{corollary} \label{corollary_isomorphism_problem_commutative_case}

Let $\fC \bD_{Q_i} \subseteq \bD_{Q_i} \subset \bM^{d_i}$ be two commutative free subvarieties of injective nc operator balls. Then, $A(\fC \bD_{Q_1})$ and $A(\fC \bD_{Q_2})$ are completely isometrically isomorphic if and only if there is a linear isomorphism $L : \bD_{Q_1} \to \bD_{Q_2}$ such that $L(\fC \bD_{Q_1}) = \fC \bD_{Q_2}$.

\end{corollary}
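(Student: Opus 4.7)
The plan is to verify that this corollary is an immediate application of Theorem \ref{theorem_isomorphism_A(D_Q)_with_nc_biholomorphism_of_domains}, by checking that the commutative free subvarieties $\fC \bD_{Q_i}$ satisfy the hypotheses of that theorem: they are homogeneous, matrix-spanning subvarieties sitting inside injective nc operator balls. The last condition is part of the statement of the corollary, so only homogeneity and the matrix-spanning property need to be verified.

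First I would note that each $\fC \bD_{Q_i}$ is a homogeneous subvariety. Indeed, by definition
\[
\fC \bD_{Q_i} = V\bigl( \langle Z_k Z_l - Z_l Z_k : 1 \leq k, l \leq d_i \rangle \bigr),
\]
which is cut out by free nc polynomials that are homogeneous of degree $2$. The ideal they generate is therefore homogeneous, so by Proposition \ref{prop_homogeneous_ideals_vs_varieties} the variety $\fC \bD_{Q_i}$ is homogeneous in $\bD_{Q_i}$. Next, Example \ref{example_subvarieties_with_full_matrix_span}(1) already verifies that $\operatorname{mat-span} \fC \bD_{Q_i} = \bM^{d_i}$, so in particular $\fC \bD_{Q_i}$ is matrix-spanning for $\bD_{Q_i}$.

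With both hypotheses in place, Theorem \ref{theorem_isomorphism_A(D_Q)_with_nc_biholomorphism_of_domains} applies directly to the pair $(\fC \bD_{Q_1}, \fC \bD_{Q_2})$ and yields the equivalence between the existence of a completely isometric isomorphism $A(\fC \bD_{Q_1}) \cong A(\fC \bD_{Q_2})$ and the existence of a linear isomorphism $L : \bD_{Q_1} \to \bD_{Q_2}$ with $L(\fC \bD_{Q_1}) = \fC \bD_{Q_2}$, which is precisely the claim. Since the corollary is merely a specialization of an already established theorem to a particular class of subvarieties whose matrix-spanning property has been recorded in Example \ref{example_subvarieties_with_full_matrix_span}, there is no genuine obstacle to overcome — the argument is pure bookkeeping.
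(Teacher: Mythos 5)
Your proposal is correct and matches the paper's own treatment: the paper presents this corollary as an immediate consequence of Example \ref{example_subvarieties_with_full_matrix_span}(1) (the matrix-spanning property of $\fC\bD_Q$) together with Theorem \ref{theorem_isomorphism_A(D_Q)_with_nc_biholomorphism_of_domains}, which is exactly your argument. Your extra check of homogeneity via Proposition \ref{prop_homogeneous_ideals_vs_varieties} is a harmless (and correct) bit of added bookkeeping.
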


\begin{corollary} \label{corollary_isomorphism_problem_q-commutative_case}

Let $\fC^{q_i} \bD_{Q_i} \subseteq \bD_{Q_i} \subset \bM^2$ be two $q_i$-commuting free subvarieties of injective nc operator balls. Then, $A(\fC^{q_1} \bD_{Q_1})$ and $A(\fC^{q_2} \bD_{Q_2})$ are completely isometrically isomorphic if and only if there is a linear isomorphism $L : \bD_{Q_1} \to \bD_{Q_2}$ such that $L(\fC^{q_1} \bD_{Q_1}) = \fC^{q_2} \bD_{Q_2}$.

\end{corollary}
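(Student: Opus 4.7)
The plan is to derive this corollary as a direct consequence of the main classification result, Theorem \ref{theorem_isomorphism_A(D_Q)_with_nc_biholomorphism_of_domains}. For this, the only thing that needs to be checked is that the $q$-commuting free subvariety $\fC^q \bD_Q$ satisfies the hypotheses of that theorem, namely that it is a \emph{matrix-spanning homogeneous subvariety} of $\bD_Q$.

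First I would verify homogeneity. The defining relation $Z_1 Z_2 - q Z_2 Z_1 = 0$ is a homogeneous polynomial of degree $2$, so the ideal $\langle Z_1 Z_2 - q Z_2 Z_1 \rangle \triangleleft \bC\langle Z \rangle$ is homogeneous. By Proposition \ref{prop_homogeneous_ideals_vs_varieties}, the corresponding subvariety $\fC^q \bD_Q = V(\langle Z_1 Z_2 - q Z_2 Z_1 \rangle)$ is homogeneous.

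Next, the matrix-spanning property is precisely what was already observed in Example \ref{example_subvarieties_with_full_matrix_span}(2): for any $(X_1, X_2) \in M_n^2$ one has the decomposition $(X_1,X_2) = (X_1,0) + (0,X_2)$, and both summands trivially $q$-commute (after appropriate scaling into $\bD_Q$), so they lie in $\fC^q \bD_Q(n)$. Hence $\operatorname{mat-span} \fC^q \bD_Q(n) = \bC^2 \otimes M_n$ for all sufficiently large $n$. Alternatively, one may invoke Theorem \ref{theorem_mat-spanning_subvarieties} and simply observe that the ideal $I(\fC^q \bD_Q)$ contains no nonzero linear free nc polynomials, since the coordinate functions $Z_1, Z_2$ do not vanish on $\fC^q \bD_Q$ (for instance, the tuples $(X_1,0)$ and $(0,X_2)$ lie in $\fC^q \bD_Q$ for any $X_1, X_2$ in the respective open unit disks of the relevant operator spaces).

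With both hypotheses verified, Theorem \ref{theorem_isomorphism_A(D_Q)_with_nc_biholomorphism_of_domains} applies directly with $\fV_i = \fC^{q_i} \bD_{Q_i}$, and gives the equivalence between the existence of a completely isometric isomorphism $A(\fC^{q_1} \bD_{Q_1}) \cong A(\fC^{q_2} \bD_{Q_2})$ and the existence of a linear isomorphism $L : \bD_{Q_1} \to \bD_{Q_2}$ with $L(\fC^{q_1}\bD_{Q_1}) = \fC^{q_2}\bD_{Q_2}$. There is no real obstacle here; the corollary is essentially a specialization. The only mildly subtle point worth flagging is that the equality $L(\fC^{q_1}\bD_{Q_1}) = \fC^{q_2}\bD_{Q_2}$ does \emph{not} say anything a priori about a relationship between $q_1$ and $q_2$ (e.g.\ $q_1 = q_2$ or $q_1 q_2 = 1$); such arithmetic consequences would require further analysis of how a linear isomorphism can transport one $q$-commuting relation to another, which is not part of the statement being proven.
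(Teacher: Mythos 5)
Your proposal is correct and matches the paper's approach exactly: the paper presents this as an immediate consequence of Example \ref{example_subvarieties_with_full_matrix_span} (which gives the matrix-spanning property of $\fC^{q}\bD_Q$) together with Theorem \ref{theorem_isomorphism_A(D_Q)_with_nc_biholomorphism_of_domains}, and your verification of homogeneity and the matrix-spanning hypothesis is precisely the content being invoked.
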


Lastly, we highlight an interesting application of Theorem \ref{theorem_nc_biholomorphism_that_is_linear} to the case when $\fV_i = \bD_{Q_i}$, which our notations allow by letting $\fV_i = V(\langle 0 \rangle)$ and using Theorem \ref{thm_A(V)_is_quotient_of_A(D_d)} to identify \[
A(\fV_i) \cong A(\bD_{Q_i})/\langle 0 \rangle = A(\bD_{Q_i}).
\]
Note that this result is valid for all nc operator balls $\bD_Q$ and not just the injective ones.

\begin{theorem} \label{theorem_isomorphism_problem_for_V=D_Q}

Let $\bD_{Q_i}$ be two nc operator balls. Then, $A(\bD_{Q_1}) $ and $ A(\bD_{Q_2})$ are completely isometrically isomorphic if and only if $\bD_{Q_1}$ and $\bD_{Q_2}$ are linearly isomorphic.

\end{theorem}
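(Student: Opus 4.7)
The plan is to deduce this from the already-established Theorems \ref{thm:iso_cont} and \ref{theorem_nc_biholomorphism_that_is_linear}, after making the identification flagged in the paragraph just before the statement: each $\bD_{Q_i}$ is a homogeneous subvariety of itself (the zero-set of the zero ideal) and Theorem \ref{thm_A(V)_is_quotient_of_A(D_d)} gives $A(V(\langle 0 \rangle)) \cong A(\bD_{Q_i})$. This allows both theorems to be applied with $\fV_i = \bD_{Q_i}$, without invoking the injectivity or matrix-spanning hypotheses that featured in earlier results.

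For the reverse implication, suppose $L \colon \bD_{Q_1} \to \bD_{Q_2}$ is a linear isomorphism. It is the restriction of a bounded linear map between finite-dimensional normed spaces, and in particular is uniformly continuous, as is $L^{-1}$; being linear, it is also nc holomorphic. Thus $L$ is a uniformly continuous nc biholomorphism of $\fV_1 = \bD_{Q_1}$ onto $\fV_2 = \bD_{Q_2}$ with uniformly continuous inverse, and the implication (3)$\Rightarrow$(1) in Theorem \ref{thm:iso_cont} delivers the completely isometric isomorphism $f \mapsto f \circ L$ between $A(\bD_{Q_1})$ and $A(\bD_{Q_2})$.

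For the forward implication, assume $\alpha \colon A(\bD_{Q_1}) \to A(\bD_{Q_2})$ is a completely isometric isomorphism. Apply the implication (1)$\Rightarrow$(3) of Theorem \ref{thm:iso_cont} to obtain a uniformly continuous nc biholomorphism $G \colon \bD_{Q_1} \to \bD_{Q_2}$ with uniformly continuous inverse. Each nc operator ball $\bD_{Q_i}$ is a circular nc domain containing the origin, since it is identified via \eqref{eq:identify_DQ} with the open unit ball of an operator space. We then invoke Theorem \ref{theorem_nc_biholomorphism_that_is_linear} with $\Omega_i = \fX_i = \bD_{Q_i}$; the hypothesis $G(\fX_1) = \fX_2$ holds trivially since $G$ is bijective, and the conclusion yields a linear isomorphism $L \colon \bD_{Q_1} \to \bD_{Q_2}$ as required.

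The only real technical point to monitor is that we are applying Theorem \ref{theorem_nc_biholomorphism_that_is_linear} with the subsets $\fX_i$ equal to the entire ambient domains $\Omega_i$; these are trivially ``closed in $\Omega_i$'' and circular, and inspection of the proof shows that its essential ingredients (the Braun--Kaup--Upmeier rigidity theorem for circular bounded domains and the Helton--Klep--McCullough rigidity theorem for nc biholomorphisms fixing the origin) do not require any of the injectivity or matrix-spanning assumptions that were needed earlier in the paper. This is why the isomorphism question for the full algebras $A(\bD_Q)$---in contrast with the situation for subvariety algebras treated in Theorem \ref{theorem_isomorphism_A(D_Q)_with_nc_biholomorphism_of_domains}---is settled in complete generality, with no hypothesis on $Q$ beyond the standing assumption that it defines a nc operator ball.
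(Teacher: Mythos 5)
Your proposal is correct and follows essentially the same route as the paper: the reverse direction via Theorem \ref{thm:iso_cont} and the uniform continuity of linear maps, and the forward direction by composing Theorem \ref{thm:iso_cont} with Theorem \ref{theorem_nc_biholomorphism_that_is_linear} applied to $\Omega_i = \fX_i = \bD_{Q_i}$. Your added verification that neither injectivity nor matrix-spanning is needed here matches the paper's own remark preceding the theorem.
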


\begin{proof}
Suppose $A(\bD_{Q_1}) \cong A(\bD_{Q_2})$ completely isometrically. By Theorem \ref{thm:iso_cont}, we get that there is a uniformly continuous nc biholomorphism $G : \bD_{Q_1} \to \bD_{Q_2}$ with a uniformly continuous inverse. By Theorem \ref{theorem_nc_biholomorphism_that_is_linear}, it then follows that there is a linear isomorphism $L : \bD_{Q_1} \to \bD_{Q_2}$, as required.

The converse follows from Theorem \ref{thm:iso_cont} and the fact that a linear isomorphism is uniformly continuous with a uniformly continuous bijective inverse.
\end{proof}


\section{\textbf{Concluding Remarks}} \label{section_concluding}

Let us reinterpret some of the above work and try to gain some new insights. 
Suppose $E$ is a finite dimensional space of dimension $d$. 
Let $\cE = E^*$ be the operator space dual of $E$, and let $\{Q_1, \ldots, Q_d\}$ be a basis for $\cE$. 
Define the linear operator-valued polynomial $Q(Z) = \sum_{j = 1}^d Q_j Z_j$.
Then we can define the nc operator ball $\bD_Q$ given by \eqref{eq:operator_ball} and we can identify $\bD_Q(n)$ with the open unit ball of $M_n(\cE)$ as in \eqref{eq:identify_DQ} for each $n \in \bN$. 
We constructed a unital operator algebra $A(\bD_Q)$ of uniformly continuous nc functions on $\bD_Q$. 
Let us see how this operator algebra and the theorems we proved relate to $E$. 

\subsection{$A(\bD_Q)$ is the universal unital operator algebra over $E$}\label{subsec:OA}

We claim that $A(\bD_Q)$ can be naturally identified with the \emph{free unital operator algebra} $OA_u(E)$ generated by $E$ (see \cite[pp. 112--113]{PisierBook}). 
To see this, we first identify
\[
E \cong \spn\{Z_1, \ldots, Z_d\} \subseteq A(\bD_Q).
\]
Now $E = E^{**} = \cE^*$, and thus $\spn\{Z_1, \ldots, Z_d\}$ can be identified as the space of linear functionals on $\cE \cong M^d_1 = \bC^d$. We want to know if this identification is completely isometric. The norm of a matrix valued linear polynomial $F \in \spn\{Z_1, \ldots, Z_d\} \otimes M_k \subset M_k(A(\bD_Q))$ is given by
\[
\sup\{ \|F(X)\| : n \in \bN, X \in \ol{\bD_Q}(n)\},
\]
while the norm of $\Phi \in M_k(\cE^*)$ is, by definition,
\[
\sup\{ \|\Phi(x)\| : n \in \bN,  x \in M_n(\cE), \|x\|_n \leq 1\}. 
\]
It is clear that both these suprema are equal. Therefore, $\spn\{Z_1, \ldots, Z_d\} \cong E$ completely isometrically. 
Under this identification, $\ol{\bD_Q}$ is the space of completely contractive maps from $E$ into $M_n$ for any $n \in \bN$.

After making some additional identifications, $OA_u(E)$ is defined to be the closure of the free nc polynomials $\bC \langle Z \rangle$ with respect to the norm
\begin{align} \label{equation_oaue_norm}
&\left\|\sum_\alpha c_\alpha Z^\alpha\right\|_{OA_u(E)} = \\
&\sup \left\{\left\|c_0 I_{H_v} + \sum_{k=1}^\infty \sum_{|\alpha|=k} c_\alpha v(Z_{\alpha_1}) v(Z_{\alpha_2}) \cdots v(Z_{\alpha_k})\right\| : v : E \to B(H_v), \, \|v\|_{cb} \leq 1 \right\}. \label{equation_oaue_norm_2}
\end{align}
Here, we are using the fact that the tensor algebra (given by a non-closed sum)
\[
T(E) = \bC \oplus E \oplus E \otimes E \oplus \cdots
\]
can be identified with $\bC\langle Z \rangle$, and then we expand all tensors in terms of the basis $\{Z_1, \ldots, Z_d\}$. Matrix norms on $M_n(OA_u(E))$ are defined similarly for each $n \in \bN$.
On the other hand, $A(\bD_Q)$ is the closure of $\bC\langle Z \rangle$ with respect to the norm
\begin{equation} \label{equation_adq_norm}
\left\|\sum_\alpha c_\alpha Z^\alpha\right\|_{A(\bD_Q)} = \sup \left \{\left\|\sum_\alpha c_\alpha X^\alpha \right\| : X \in \ol{\bD_Q} \right \} .
\end{equation}
(\ref{equation_oaue_norm_2}) and the RHS of (\ref{equation_adq_norm}) are almost precisely the same with the one difference that in (\ref{equation_adq_norm}) we restrict to completely contractive maps $v : E \to M_n$ for some $n \in \bN$. 
However, arguing as in the first paragraph of the proof of Lemma \ref{lem:norms}, one can see that these two quantities are equal. 
It is also clear that the above reasoning persists for the corresponding matrix norms as well and hence $A(\bD_Q) \cong OA_u(E)$ completely isometrically. With this concrete identification, we end this discussion with two observations for the reader to ponder over.

\subsection{The meaning of isomorphisms and linear isomorphisms}\label{subsec:linear}

A moment of clarity reveals that the existence of a linear isomorphism $L : \bD_{Q_1} \to \bD_{Q_2}$ is equivalent to the existence of a completely isometric (linear) isomorphism between the corresponding operator spaces $\cE_1$ and $\cE_2$. 
This, in turn, is equivalent to the existence of a completely isometric isomorphism $E_1 \to E_2$. 
Thus, Theorem \ref{theorem_isomorphism_problem_for_V=D_Q} says that two finite dimensional operator spaces $E_1$ and $E_2$ are completely isometrically isomorphic if and only if their corresponding free unital operator algebras $OA_u(E_1)$ and $OA_u(E_2)$ are completely isometrically isomorphic. 
The forward implication is obvious --- if one insists, then it follows from the universal property of $OA_u(E)$ --- but the backward implication is not (it would have been trivial had we assumed that $OA_u(E_1)$ and $OA_u(E_2)$ are completely isometrically isomorphic {\em via an isomorphism that sends $E_1$ to $E_2$}, but getting such a special isomorphism is the nontrivial punchline). 

\subsection{Subvarieties as determining sets}\label{subsec:subvars}

Theorem \ref{theorem_isomorphism_A(D_Q)_with_nc_biholomorphism_of_domains} says that a matrix-spanning homogeneous subvariety $\fV \subseteq \bD_Q$ of an injective nc operator ball is a complete invariant for $A(\fV)$, and more so that the nc operator ball $\bD_Q$ is an invariant (clearly, not a complete one). Even more surprisingly, the theorem says that $\fV$ is {\em determining} for the operator space $E$. 

To be explicit, consider Corollary \ref{corollary_isomorphism_problem_commutative_case}. 
Let us say that a completely contractive map is {\em commuting} if its range generates a commutative algebra. 
Now, Theorem \ref{theorem_isomorphism_problem_for_V=D_Q} implies that all the completely contractive representations of an injective operator space on a finite dimensional space form a complete invariant of the operator space --- sure, that's obvious (well, if we assume $\dim E < \infty$). 
Corollary \ref{corollary_isomorphism_problem_commutative_case} then implies that already the commuting completely contractive representations of some injective $E$ on a finite dimensional space determine $E$, in the sense that if the commuting representations of $E_1$ and $E_2$ are nc biholomorphic, then $E_1$ and $E_2$ must be completely isometrically isomorphic. 

\subsection*{Acknowledgements} The authors are grateful to Eli Shamovich who read an earlier version of this paper and found a critical mistake in the previous formulation of Theorem 6.5. The authors are also grateful to an anonymous referee for suggesting several corrections and improvements to the paper.


\bibliographystyle{amsplain}

\begin{thebibliography}{99}

\bibitem{Abate} M. Abate. 
\emph{Iteration theory of holomorphic maps on taut manifolds}. 
Research and Lecture Notes in Mathematics. Complex Analysis and Geometry. Mediterranean Press, Rende, 1989.

%
\bibitem{AMBook}
J. Agler and J.E. McCarthy, 
\newblock {\em {Pick Interpolation and Hilbert Function Spaces}},
\newblock Graduate Studies in Mathematics, 44, Providence,
\newblock RI: American Mathematical Society, (2002).

\bibitem{AM15a} J. Agler and J.E. McCarthy, 
\emph{Global holomorphic functions in several non-commuting variables}, 
Canadian J. Math. \textbf{67} (2015), 241--285. 

\bibitem{AM15b} J. Agler and J.E. McCarthy, 
\emph{Pick interpolation for free holomorphic functions}, 
Amer. J. Math. \textbf{137} (2015), 1685--1701.

\bibitem{AM16} J. Agler and J.E. McCarthy, 
\emph{Aspects of non-commutative function theory},
Concrete Operators 3.1 (2016), 15--24.



\bibitem{AriLat12} A. Arias and F. Latremoliere, 
\emph{Classification of noncommutative domain algebras}, 
C. R. Math. Acad. Sci. Paris \textbf{350} (2012), 609--611.

\bibitem{AriPop00} A. Arias and G. Popescu, 
\emph{Noncommutative interpolation and Poisson transforms}, 
Israel J. Math. \textbf{115} (2000), 205--234.

\bibitem{Arv69} W.B. Arveson, 
\emph{Subalgebras of C*-algebras}, 
Acta Math. \textbf{123} (1969), 141--224.

\bibitem{Arv98} W.B. Arveson,
\emph{Subalgebras of C*-algebras III: Multivariable operator theory},
Acta Math. \textbf{181} (1998).

\bibitem{Arv11} W.B. Arveson, 
\emph{The noncommutative Choquet boundary II: hyperrigidity}, 
Israel J. Math. \textbf{184} (2011): 349--385.

\bibitem{BMV16} J.A. Ball, G. Marx and V. Vinnikov, 
\emph{Noncommutative reproducing kernel Hilbert spaces}, 
J. Funct. Anal. \textbf{271} (2016), 1844--1920.

\bibitem{BMV18} J.A. Ball, G. Marx and V. Vinnikov, 
\emph{Interpolation and transfer-function realization for the noncommutative Schur-Agler class}, 
In Operator theory in different settings and related applications, Birkh\"{a}user, Cham, (2018), 23--116.

\bibitem{BS+} S. Belinschi and E. Shamovich, 
\emph{Iteration theory of noncommutative maps}, arXiv:2310.03549, (2023). 

\bibitem{BLM04} D.P. Blecher and C.L. Merdy, 
\emph{Operator algebras and their modules: an operator space approach}, 
Oxford University Press, (2004).

\bibitem{BKU78} R. Braun, W. Kaup and H. Upmeier,
\emph{On the automorphisms of circular and Reinhardt domains in complex Banach spaces},
Manuscripta Math \textbf{25} 97–133 (1978).

\bibitem{DavPitts98a} K.R. Davidson and D.R. Pitts, 
\emph{Nevanlinna-Pick interpolation for non-commutative analytic Toeplitz algebras}, 
Integral Equations Operator Theory \textbf{31} (1998), 321--337.

\bibitem{DavPitts98b} K.R. Davidson and D.R. Pitts, 
\emph{The algebraic structure of non-commutative analytic Toeplitz algebras}, 
Math. Ann. \textbf{311} (1998), 275--303.

\bibitem{DavPitts99} K.R. Davidson and D.R. Pitts, 
\emph{Invariant subspaces and hyper-reflexivity for free semigroup algebras}, 
Proc. Lond. Math. Soc. \textbf{78} (1999), 401--430.

\bibitem{DRS11} K.R. Davidson, C. Ramsey and O.M. Shalit,
\emph{The isomorphism problem for some universal operator algebras},
Adv. Math. \textbf{228} (2011), 167--218.

\bibitem{DRS15} K.R. Davidson, C. Ramsey and O.M. Shalit, 
\emph{Operator algebras for analytic varieties}, 
Trans. Amer. Math. Soc. \textbf{367} (2015), 1121--1150. 

\bibitem{Har12} M. Hartz, 
\emph{Topological isomorphisms for some universal operator algebras}, 
J. Funct. Anal. \textbf{263} (2012).

\bibitem{HRS22} M. Hartz, S. Richter and O.M. Shalit, 
\emph{von Neumann's inequality for row contractive matrix tuples}, 
Math. Z \textbf{301} (2022), 3877--3894.

\bibitem{HarSha23} M. Hartz and O.M. Shalit,  
\emph{Tensor algebras of subproduct systems and noncommutative function theory}, 
arxiv preprint, (2023).

\bibitem{HKM11a} J.W. Helton, I. Klep and S.A. McCullough, 
\emph{Proper analytic free maps},
J. Funct. Anal. \textbf{260} (2011), 1476--1490.

\bibitem{HKM11b} J.W. Helton, I. Klep and S.A. McCullough, 
\emph{Analytic mappings between noncommutative pencil balls}, 
J. Math. Anal. Appl. \textbf{376} (2011), 407--228.

\bibitem{HKM12} J.W. Helton, I. Klep and S.A. McCullough, 
\emph{Free analysis, convexity and LMI domains}, 
Mathematical Methods in Systems, Optimization, and Control: Festschrift in Honor of J. William Helton. Basel: Springer Basel, (2012), 195--219.

%
%
\bibitem{KakSha19} E.T.A. Kakariadis and O.M. Shalit, 
\emph{Operator algebras of monomial ideals in noncommuting variables}, 
J. Math. Anal. Appl. \textbf{472} (2019), 738--813.

\bibitem{KVV14} D. Kaliuzhnyi-Verbovetskyi and V. Vinnikov,
\emph{Foundations of Free Noncommutative Function Theory}, American Mathematical Soc., \textbf{199} (2014).

\bibitem{Kiri} R. Kiri, 
\emph{Subhomogeneous Operator Systems and Classification of Operator Systems Generated by $\Lambda $-Commuting Unitaries}, 
arXiv preprint arXiv:2302.04767 (2023).


\bibitem{MS98} P.S. Muhly and B. Solel,
\textit{Tensor algebras over C*-correspondences: representations, dilations and C*-envelopes},
J. Funct.\ Anal.\ \textbf{158} (1998), 389--457.


\bibitem{MS04} P.S. Muhly and B. Solel,
\emph{Hardy algebras, W*-correspondences and interpolation theory}, 
Math. Ann. \textbf{330} (2004), 353--415.

\bibitem{MS05} P.S. Muhly and B. Solel,  
\emph{Hardy algebras associated with W*-correspondences (point evaluation and Schur class functions)}, 
Operator Theory, systems theory and scattering theory: multidimensional generalizations. Basel: Birkh\"{a}user Basel, (2005), 221--241.

\bibitem{MS11} P.S. Muhly and Baruch Solel, 
\emph{Representations of Hardy algebras: absolute continuity, intertwiners, and superharmonic operators}, 
Integral Equations Operator Theory \textbf{70} (2011), 151--203.

\bibitem{PaulsenBook} V. Paulsen, 
\emph{Completely Bounded Maps and Operator Algebras}, 
Cambridge University Press, (2002).

\bibitem{PisierBook} G. Pisier,
\emph{Introduction to Operator Space Theory},
Cambridge University Press, (2003).

\bibitem{PV18} M. Popa and V. Vinnikov,
\emph{$H^2$ spaces of non-commutative functions},
Complex Analysis and Operator Theory \textbf{12} (2018), 945--967.

\bibitem{Pop95} G. Popescu, 
\emph{Functional calculus for noncommuting operators}, 
Michigan Math. J. \textbf{42} (1995), 345--352.

\bibitem{Pop96} G. Popescu, 
\emph{Non-commutative disk algebras and their representations}, 
Proc. Amer. Math. Soc. \textbf{124}:7 (1996), 2137--2148.

\bibitem{Pop06} G. Popescu, 
\emph{Operator theory on noncommutative varieties}, 
Indiana Univ. Math. J. \textbf{55} (2006).

\bibitem{Pop11} G. Popescu, 
\emph{Free biholomorphic classification of noncommutative domains}, 
Int. Math. Res. Not. IMRN (2011), 784--850.

\bibitem{Ruan89} Z.J Ruan,
\emph{Injectivity of operator spaces},
Transactions of the Amer. Math. Soc. \text{315}(1) (1989), 89--104

\bibitem{Rud08} W. Rudin,
\emph{Function theory in the unit ball of $\bC^n$},
Springer Science \& Business Media (2008).

\bibitem{Sha15} O.M. Shalit,
\emph{Operator Theory and Function Theory in Drury–Arveson Space and Its Quotients},
In: Alpay, D. (eds) Operator Theory. Springer, Basel (2015).

\bibitem{Smith00} R. Smith,
\emph{Finite dimensional injective operator spaces},
Proceedings of the Amer. Math. Soc. \textbf{128}(11) (2000), 3461--3462.

\bibitem{SSS18} G. Salomon, O.M. Shalit and E. Shamovich, 
\emph{Algebras of bounded noncommutative analytic functions on subvarieties of the noncommutative unit ball}, 
Trans. Amer. Math. Soc. \textbf{370} (2018), 8639--8690.

\bibitem{SSS20} G. Salomon, O.M. Shalit and Eli Shamovich, 
\emph{Algebras of noncommutative functions on subvarieties of the noncommutative ball: the bounded and completely bounded isomorphism problem}, 
J. Funct. Anal. \textbf{278} (2020), 108427.


\bibitem{ShaSol09} O.M. Shalit and B. Solel,
\textit{Subproduct systems},
Doc. Math. \textbf{14} (2009), 801--868.

\bibitem{Sham18} E. Shamovich, 
\emph{On fixed points of self maps of the free ball}, 
J. Funct. Anal. \textbf{275} (2018), 422--441.

\end{thebibliography}

\end{document}